\newcommand\norm[1]{\left\lVert#1\right\rVert}
\newtheorem{theorem}{Theorem}[section]
\newtheorem{proposition}[theorem]{Proposition}
\newtheorem{maintheorem}{Theorem}
\newcommand{\Ba}{{\mathbf{Barrier}}}
\newcommand{\Ub}{{\mathbf{Tile}}}
\newcommand{\Di}{{\mathbf{Dilation}}}
\newcommand{\An}{{\mathbf{Corridor}}}
\newcommand{\Bb}{{\mathbf{Box}}}
\newcommand{\Lb}{{\mathbf{L-Box}}}
\newcommand{\PT}{{\mathbf{PT}}}
\newcommand{\Geo}{{\mathbf{Geo}}}
\newcommand{\bw}{{\mathbf{w}}}
\newcommand{\Tt}{{\mathbf{T}}}
\newcommand{\SST}{{\mathbf{Stable}}}
\newcommand{\US}{{\mathbf{Unstable}}}
\newcommand{\Boo}{{\mathbf{Boosting}}}
\newcommand{\Fav}{{\mathbf{Fav}}}
\newcommand{\Gr}{{\mathbf{Grid}}}
\newcommand{\Pro}{{\mathbf{Proj}}}
\newcommand{\Bas}{{\mathbf{Base-event}}}
\newcommand{\Sc}{{\mathbf{S}}}
\newcommand{\fn}{{\mathfrak{n}}}
\newcommand{\fm}{{\mathfrak{m}}}
\newcommand{\brj}{{\llbracket{1,2^j}\rrbracket^2}}
\newcommand{\fb}{\mathfrak{B}}
\newcommand{\nin}{\noindent}
\newcommand{\e}{\varepsilon}
\begin{document}
\title[Large Deviation in FPP]{Upper Tail Large Deviations in First Passage Percolation}
\author{Riddhipratim Basu}
\address{Riddhipratim Basu, International Centre for Theoretical Sciences, Tata Institute of Fundamental Research, Bangalore, India}
\email{rbasu@icts.res.in}
\author{Shirshendu Ganguly}
\address{Shirshendu Ganguly, Department of Statistics, UC Berkeley, Berkeley, CA, USA}
\email{sganguly@berkeley.edu}
\author{Allan Sly}
\address{Allan Sly, Department of Mathematics, Princeton University, Princeton, NJ, USA}
\email{allansly@princeton.edu}
\date{}
\maketitle
\begin{abstract} For first passage percolation on $\Z^2$ with i.i.d.\ bounded edge weights, we consider the upper tail large deviation event; i.e., the rare situation where the first passage time between two points at distance $n$, is macroscopically larger than typical. It was shown by Kesten \cite{Kes86} that the probability of this event decays as $\exp (-\Theta(n^2))$. However the question of existence of the rate function i.e., whether the log-probability normalized by $n^2$ tends to a limit, had remained open. We show that under some additional mild regularity assumption on the passage time distribution, the rate function for upper tail large deviation indeed exists.  Our proof can be generalized to work in higher dimensions and for the corresponding problem in last passage percolation as well. The  key intuition behind the proof is that a limiting metric structure which is atypical causes the upper tail large deviation event.  The formal argument then relies on an approximate version of the above  which allows us to dilate the large deviation environment to compare the upper tail probabilities for various values of $n.$ 
\end{abstract}
\tableofcontents

\section{Introduction and main result}
First passage percolation is a popular model of fluid flow through inhomogeneous random media, where one puts random weights on the edges of a graph and considers the first passage time between two vertices, which is obtained by minimizing the total weight among all paths between the two vertices. First passage percolation on Euclidean lattices was introduced by Hammersley and Welsh \cite{HW65} in 1965 and has been studied extensively both in statistical physics and probability literature ever since. This model served as one of the motivations of developing the theory of subadditive stochastic processes and the early progresses using subadditivity was made by Hammersley-Richardson-Kingman \cite{HW65,K73,R73} and culminated in the proof of the celebrated Cox-Durrett shape theorem \cite{CD81} establishing the first order law of large number behaviour for passage times between far away points. Further progress was made into the 80s and 90s through efforts of Kesten \cite{Kes86,Kes87,Kes93} and Talagrand \cite{Tal94} establishing concentration inequalities for passage times; Newman and others \cite{New95} on more geometric aspects of the model. Much progress has been made since \cite{BKS04,H08} including a flurry of results in the last five years \cite{Cha11, AD14, DH14, DH17}. Despite this impressive progress, most of the fundamental questions still remain major mathematical challenges, see the survey \cite{ADH15} for a comprehensive history as well as an extensive list of the major open problems in this field. 

One other reason planar first passage percolation came into prominence is that this model is believed to be in the KPZ universality class that was introduced by Kardar, Parisi and Zhang \cite{KPZ86} in 1986. Using non rigorous renormalization group techniques, KPZ predicted universal scaling exponents for many (1+1)-dimensional growth models including first and last passage percolation under very general conditions on the passage time distribution (precise definitions later). An explosion of rigorous results in the last 18 years starting with the seminal work of Baik, Deift and Johansson \cite{BDJ99} has now verified the KPZ prediction for a handful of models including last passage percolation with Exponential, Geometric or Bernoulli passage times. However, this progress has been mostly restricted to the so-called exactly solvable (or, integrable) models where exact formulae are available using deep connections to algebraic combinatorics, representation theory and random matrix theory; and extremely detailed information has been obtained about such models by analyzing those formulae. Although the same results are qualitatively expected to hold for a much larger class of models, these methods rely very crucially on the exact formulae, and moving beyond the exactly solvable models remains a major challenge. 

Our focus in this paper is such a problem in the non-integrable setting of first passage percolation in the large deviation regime. The question first arose in the work of Kesten \cite{Kes86} who considered the probability of large deviation events in first passage percolation. Postponing the precise definitions momentarily, let us first describe informally the set-up. Consider the passage time $\T_n$ from $(0,0)$ to $(n,0)$. The shape theorem dictates that under some regularity conditions $\frac{\Tt_n}{n}\to \mu$ almost surely for some $\mu \in (0,\infty)$. The study of large deviations is concerned with the unlikely events $\{\Tt_n \geq (\mu+\e)n\}$ (upper tail) and $\{\Tt_n \leq (\mu-\e)n\}$ (lower tail). In the classical theory of large deviations, log of such probabilities suitably scaled (by the so-called speed of large deviations) converges to a function of $\e$, known as the rate function. For first passage percolation, Kesten \cite{Kes86} showed the large deviation speed of $n$ and existence of the rate function for the lower tail using a subadditive argument. For the upper tail Kesten showed a large deviation speed on $n^2$ for bounded edge weight distribution, however the existence of rate function remained open (see Open Question 18, in \cite{ADH15}). Our main result in this paper (see Theorem \ref{t:ldp} below) answers this question establishing the existence of rate function for the upper tail, thereby establishing first such result beyond the exactly solvable models.

\subsection{Model definitions and statement of result}\label{mods}
We start with formal definitions of standard first passage percolation on $\Z^d$, $d\geq 2$. Let $E(\Z^d)$ denote the set of all nearest neighbour edges  in $\Z^d$. Let $\nu$ be a probability measure supported on the non-negative real line. Let $\Pi=\{X_e: e \in E(\Z^d)\}$ denote a field of i.i.d.\ random variables where each $X_e$ (called the passage time of the edge $e$) has distribution $\nu$. For a sequence $\gamma=e_1e_2\cdots e_{k}$ of neighbouring edges (called a path), the passage time of the path, denoted by $\ell(\gamma)$,\footnote{{For brevity of notation we shall often denote $\ell(\gamma)$ by $|\gamma|$}.} is defined as 
$$\ell(\gamma)=\sum_{i=1}^{k} X_{e_i}.$$
For any two vertices $u$ and $v$, the first passage time between $u$ and $v$, denoted $\PT(u,v)$ is defined as the infimum of $\ell(\gamma)$ where $\gamma$ varies over all paths starting at $u$ and ending at $v$. Let $\mathbf{0}$ denote the origin. Under very mild conditions on $\nu$, it is a fundamental fact that for all $v\in \Z^d$, there exists $\mu(d,\nu,v)\geq 0$ such that 
$$\lim_{n\to \infty}\frac{\PT(\mathbf{0},nv)}{n}=\mu(d,\nu,v)$$ 
almost surely. For the special case when $v=(1,0,\ldots, 0)$ denote the unit vector along the first co-ordinate, we denote the limiting constant by just $\mu$, also known as the time constant in the literature. 
For the rest of this paper we shall focus on the planar case ($d=2$) of the above model. Although our main result extends to higher dimensions with little to no change, we choose to work in two dimensions to avoid additional notational overhead. From now on, we shall be in the setting of standard first passage percolation on $\Z^2$ unless otherwise mentioned. Let $\mathbf{n}:=(n,0)$ and let us denote the passage time $\PT(\mathbf{0},\mathbf{n})$ by $\Tt_n$. As mentioned above we are concerned with the probability of the upper tail large deviation event: 
\begin{equation}\label{utdef}
\sU_{\zeta}(n):=\{\Tt_{n}\ge (\mu+\zeta) n\}.
\end{equation}
for some $\zeta>0$.  Throughout the paper we will assume the rather general condition that the passage time distribution has a continuous density on a compact interval $[0,b]$.  Even though we believe our proof methods can be used to extend our result beyond this assumption, the former will help make some of the proofs cleaner. For future reference we record this assumption below.
\begin{defn}
\label{d:pt}
For $b>0$, let $\cP(b)$ denote the set of all probability measures with support $[0,b]$ and a continuous density. 
\end{defn}

It is well known that if $\nu\in \cP(b)$ for any $b>0$, then we have $0<\mu<b$ (e.g.\ see \cite{HW65}). Also observe that for $\nu\in \cP(b)$, we have deterministically that $\Tt_{n}\leq bn$. So while considering the large deviation event $\sU_{\zeta}$ in the above scenario it suffices to consider $\zeta \in (0,b-\mu)$. Our main theorem shows that the large deviation rate function exists in the above setting. 

\begin{maintheorem}
\label{t:ldp}
Consider standard first passage percolation on $\Z^2$ with passage time distribution $\nu\in \cP(b)$ for some $b>0$. Then for $\zeta\in (0,b-\mu)$ there exists $r=r(\nu,\zeta)\in (0,\infty)$ such that 
$$\lim_{n\to \infty} -\frac{\log \P (\sU_{\zeta}(n))}{n^2}=r.$$
\end{maintheorem}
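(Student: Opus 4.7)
Let $a_n := -\log \mathbb{P}(\mathcal{U}_\zeta(n))$. Kesten's theorem gives $a_n = \Theta(n^2)$, so $r := \liminf_{n\to\infty} a_n/n^2$ is a finite positive number. The goal is the matching upper bound $\limsup_{n\to\infty} a_n/n^2 \le r$, which would make $a_n/n^2 \to r$. I will establish this by a Fekete-style dilation argument at the area scale: for every large $n_0$ and every $\delta>0$, whenever $n$ is a sufficiently large integer multiple of $n_0$,
\[
a_n \;\le\; (1+\delta)\,(n/n_0)^2\,a_{n_0} \;+\; o(n^2).
\]
Passing to the limit $n\to\infty$ and then choosing $n_0$ along a subsequence realising $r$ gives $\limsup_n a_n/n^2 \le (1+\delta)\,r$, after which $\delta\to 0$ finishes the argument.

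The heart of the construction is a \emph{localised} bad event. Fix $n_0$ and choose a constant $\alpha=\alpha(\zeta,b)$ so large that scale-$n_0$ geodesics stay inside a box $B_{n_0}$ of dimensions $n_0\times\alpha n_0$ with overwhelming probability. I aim to produce an event $\mathcal{A}_{n_0}$, measurable with respect to the edges of $B_{n_0}$, such that (i) on $\mathcal{A}_{n_0}$ every left-to-right crossing of $B_{n_0}$ has passage time at least $(\mu+\zeta-\delta)n_0$, and (ii) $\mathbb{P}(\mathcal{A}_{n_0})\ge\mathbb{P}(\mathcal{U}_\zeta(n_0))^{1+\delta}$ for $n_0$ large. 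Given such an $\mathcal{A}_{n_0}$, tile the box of dimensions $n\times\alpha n$ (with $n=kn_0$) by a $k\times k$ grid of disjoint translates of $B_{n_0}$, and ask that $\mathcal{A}_{n_0}$ hold in each cell. Independence yields probability $\mathbb{P}(\mathcal{A}_{n_0})^{k^2}$, and (i) applied tile-by-tile forces any path from $\mathbf{0}$ to $\mathbf{n}$ either to cross the $k$ tiles along the central row and acquire excess $\ge(\zeta-\delta)n$, or to exit the tiled strip, which by the choice of $\alpha$ is already longer than $(\mu+\zeta)n$. Thus the tiled event sits inside $\mathcal{U}_{\zeta-O(\delta)}(n)$, which yields the displayed inequality once a monotonicity / right-continuity argument absorbs the shift in $\zeta$.

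The main obstacle is property (ii). The event $\mathcal{U}_\zeta(n_0)$ is highly non-local: it demands only that the specific geodesic from $\mathbf{0}$ to $(n_0,0)$ be slow, whereas $\mathcal{A}_{n_0}$ must control \emph{every} crossing, and the witnessing configuration may realise $\mathcal{U}_\zeta(n_0)$ through delicate global patterns rather than a rigid slab of slow edges. A crude localisation such as ``all edges in a strip are slow'' is strictly more expensive than $\mathbb{P}(\mathcal{U}_\zeta(n_0))$ and delivers a suboptimal rate. The guiding intuition, echoing the abstract, is that conditionally on $\mathcal{U}_\zeta(n_0)$ the rescaled metric $n_0^{-1}\PT(n_0\,\cdot\,,n_0\,\cdot\,)$ concentrates (along subsequences) on an atypical deterministic metric under which \emph{every} long crossing is slow, so uniform slowdown inside a macroscopic sub-box is typical under the conditional law. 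Converting this intuition into (ii) requires a \emph{boosting}-style argument: restrict to a large random sub-box that witnesses uniform slowdown of box-restricted passage times between a well-spread grid of boundary pairs, average over translates to localise, and use Kesten-type concentration to compare box-restricted with unrestricted passage times and absorb the cost of forbidding paths that leave $B_{n_0}$ into the $e^{-\delta n_0^2}$ slack of (ii). Managing the small parameter $\delta$ through the successive steps and controlling boundary interactions of order $n_0 n$ between adjacent tiles---which is $o(n^2)$ once $n_0$ is fixed---constitute the remaining technical work.
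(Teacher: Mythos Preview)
Your high-level intuition --- that conditionally on $\sU_\zeta(n_0)$ the environment carries an atypical metric structure which can be ``replicated'' to build $\sU_\zeta(n)$ at cost roughly $\mathbb{P}(\sU_\zeta(n_0))^{(n/n_0)^2}$ --- matches the paper's, but the concrete mechanism you propose does not work, and the gap is exactly where the paper's main contribution lies.

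The tiling step is broken. Your dichotomy ``the path crosses the $k$ tiles along the central row, or exits the tiled strip'' is false: the central row has height $\alpha n_0$ while the strip has height $\alpha n = k\alpha n_0$, so a path can leave the central row and wander through the other $k-1$ rows at vertical cost $O(n_0)$, which is negligible at scale $n$. More seriously, even granting that $\mathcal{A}_{n_0}$ holds in \emph{every} tile, a path from $\mathbf{0}$ to $\mathbf{n}$ need not perform a full left-to-right crossing of any single tile: within a column of tiles it can enter one tile from the left, advance halfway, exit through the top, and complete the horizontal traversal in the tile above. Neither piece is a left-to-right crossing, so your event $\mathcal{A}_{n_0}$ gives no lower bound on either, and you cannot accumulate $k$ contributions of size $(\mu+\zeta-\delta)n_0$. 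Controlling only left-to-right crossings inside a box is precisely the Chow--Zhang line-to-line setting, which the paper explicitly flags as the simplification that makes their argument go through while leaving the point-to-point problem open.

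What the paper does instead is not naive tiling but \emph{dilation}. It shows (Proposition~\ref{t:stable}) that on $\sU^*_\zeta(n)$ there is a mesoscopic scale at which most sub-tiles are ``stable'': the passage time from any point in the tile grows approximately linearly in \emph{every} direction, with an approximately convex directional profile (Proposition~\ref{conc1}). One then takes $(n_1/n)^2$ independent copies of such an environment and interleaves them tile-by-tile so that each stable tile of the scale-$n$ picture becomes a dilated tile at scale $n_1$. Stability plus convexity allow any path $\gamma'$ at scale $n_1$ to be scaled down to a path $\gamma$ at scale $n$ with $|\gamma'|\ge (1-o(1))\tfrac{n_1}{n}|\gamma|$; this is what replaces your crossing bound and is what handles arbitrary directions, partial crossings, and backtracks. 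The ``monotonicity / right-continuity'' step you wave at is also nontrivial: it is the paper's Proposition~\ref{p:cont}, proved by a separate perturbation argument that genuinely uses the continuous-density hypothesis on $\nu$.
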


A couple of remarks are in order. First, there is nothing special about the direction $(1,0)$; the same result holds for any unit vector $v$ with different rate function $r$, with minor adjustments in the proof. Also, a variant of this result holds in higher dimensions as well where the speed of the large deviation is $n^{d}$ rather than $n^2$ (See e.g., \eqref{e:ut} ). The same argument proving Theorem \ref{t:ldp} can be used to prove the higher dimensional analogue. However, in this paper we shall only concentrate on proving Theorem \ref{t:ldp}.  

Observe that the condition in Theorem \ref{t:ldp} is not optimal and we have not made an attempt to make it the weakest possible. It is however important to observe that some condition is needed to ensure even the $n^2$ speed of the large deviation. Together with the standard assumptions that the mass at $0$ is less than the critical bond percolation probability on $\Z^2$ and that the edge distribution is not degenerate at a single point, Kesten assumed boundedness. It is easy to see that the boundedness assumption cannot be completely removed. For example, if the passage times are exponentially distributed, just increasing all the passage times around the origin by $(\mu+ \zeta) n$, would force the large deviation event, while its probability being only exponentially small in $n$. One can however prove Kesten's result for passage times with sufficiently fast decaying tails, and one believes that the rate function will exist in such a case too possibly under some additional assumptions. However, in this paper we have not pursued those directions, and instead focussed on proving the result in the simplest possible case that is still sufficiently general to be of interest.

\subsection{Background and Related Works}
First passage percolation can be thought of as putting a random metric on $\Z^d$, where the distance between two vertices is given by the first passage time between them. As alluded to above, the most fundamental result about first passage percolation says that under suitable rescaling these metrics converge almost surely to a deterministic metric on $\R^d$ in a pointed Gromov-Hausdorff sense. More precisely we have the following. Suppose $\nu\in \cP(b)$ for some $b\in (0,\infty)$ (actually the result is valid more generally, one only needs some moment condition and that the mass of any atom at $0$ is sufficiently small), and let $\tilde{B}(t)$ denote the set of all vertices that are within distance $t$ of $\mathbf{0}$ in the FPP metric, and let $B(t)=\tilde{B}(t)+[-\frac{1}{2}, \frac{1}{2}]^{d}$. Then there exists a non-random compact convex set $\cB=\cB_{\nu}$ with obvious symmetries such that for each $\e>0$
\begin{equation}
\label{e:shape}
\P\left((1-\e)\cB_\nu \subset \frac{B(t)}{t} \subset (1+\e)\cB_{\nu} \text{ for all large }t \right)=1.
\end{equation}
The set $\cB$ is called the limit shape for this model. Recall the limiting constant $\mu(d,\nu,v)$ in direction $v$. It is not hard to see that $\mu(d,\nu,\cdot)$ can be extended to a norm in $\R^d$ and $\cB$ is the unit ball corresponding to this norm. The shape theorem implies that at large scales, the distance function in the FPP metric in a fixed direction grows approximately linearly, and the convexity of the limit shape is then just a consequence of triangle inequality. 
 
The shape theorem is a law of large number result, and the natural next question of obtaining fluctuations has been extensively investigated. The moderate deviation estimates are interesting, as in $d=2$, KPZ scaling predicts a fluctuation exponent of $1/3$, however the best known fluctuation and concentration bounds (for $\Tt_n$) have so far been proved at $n^{1/2+o(1)}$ scale \cite{Kes93,Tal94, BKS04}. In this paper, we are looking at the large deviation regime, i.e., where we consider a linear deviation of $\Tt_n$ from its long term value. Although we recall standard results only for $\Tt_n$; qualitatively same results hold in all directions. Also we are assuming throughout that the passage time distribution is in $\cP(b)$ for some $b$ although many of these results hold under weaker assumptions.

Kesten \cite{Kes86} considered both upper and lower tail large deviations for first passage percolation. Let $\sL_{\zeta}(n):=\{\Tt_{n}\le (\mu-\zeta) n\}$ (throughout this section for brevity we will use $\Tt_n$ to denote the passage time between $(0,0,\ldots,0)$ and $(n,0,\ldots,0)$ in $\Z^d$ although it was initially defined only for $\Z^2$) denote the lower tail large deviation event. Using a subadditive argument, Kesten showed that for $\zeta\in (0,\mu)$, 
\begin{equation}
\label{e:lt}
\lim_{n\to \infty} -\frac{\log \P(\sL_{\zeta}(n))}{n}=r_{\ell}(\zeta) \in (0,\infty). 
\end{equation}

For the upper tail large deviations, Kesten showed that 
\begin{equation}
\label{e:ut}
 0 < \liminf_{n\to \infty} - \frac{\log \P(\sU_{\zeta}(n))}{n^d} \leq \limsup_{n\to \infty} - \frac{\log \P(\sU_{\zeta}(n))}{n^d}<\infty.
\end{equation}
The existence of the limit was left open and this open question was re-iterated in \cite{ADH15} (See Question 18), which we answer in our Theorem \ref{t:ldp}.

Observe that the speed of large deviations is different in upper and lower tails. This is not unexpected and can be intuitively explained as follows. For $\Tt_n$ to be much smaller than $\mu n$, one needs only one path that is atypically small; however it is much more unlikely for $\Tt_n$ to be atypically large, since typically one can find $n^{d-1}$ many `parallel' short paths between the origin and $(n,0,0,\ldots,0)$ which are disjoint except at the beginning and the end. Thus to attain the upper tail event all such paths need to be large, each of which costs $e^{-\Theta(n)}$ and hence the total cost is at least $(e^{-\Theta(n)})^{n^{d-1}}$. Indeed this feature is quite common in many growth models, e.g.\ last passage percolation, parabolic Anderson model and deviation of the spectrum of GUE (see \cite{cranston} and the references therein).

As a matter of fact, among the only cases of growth models where the existence of rate function is known for both tails are the so-called exactly solvable models of last passage percolation. As an illustration, we only describe the result for the case of exponential directed last passage percolation in $\Z^2$ \cite{Jo99}; however the same qualitative result is known in the case of Poissonian directed last passage percolation in $\R^2$ \cite{DZ1} and last passage percolation on $\Z^2$ with geometric edge weights \cite{Jo99}. Consider the following last passage percolation model on $\Z^2$ where each vertex is equipped with an i.i.d.\ sample of $\mbox{Exp}(1)$ random variable. As before, the weight of any path is the sum of weights on it. The difference from the first passage percolation model is that we only consider up/right directed paths and the last passage time between two vertices is calculated by maximizing the weight over all such paths between the two vertices. This is one of the first exactly solvable models rigorously shown to be in the KPZ universality class by Johansson \cite{Jo99} using exact determinantal formulae.  Let $L_n$ denote the last passage time from $(0,0)$ and $(n,n)$. It is well known \cite{Ro81} that $\frac{L_n}{n}\to 4$ almost surely as $n\to \infty$. Johansson proved large and moderate deviation estimates for $L_n$. In particular he proved that  
$$\lim_{n\to \infty} \frac{\log \P(L_{n}\geq (4+\zeta)n)}{n}=-I_{u}(\zeta);\quad \zeta>0~\text{and}$$
$$\lim_{n\to \infty} \frac{\log \P(L_{n}\leq (4-\zeta)n)}{n^2}= -I_{\ell}(\zeta);\quad \zeta \in (0,4).$$
The functions $I_{\ell}$ and $I_{u}$ could in principle be explicitly evaluated there. Observe that for last passage percolation, as expected, the role of upper tail and lower tail is reversed but qualitatively there is no other difference from the FPP case.
We list below a  few other results worth mentioning: a similar result as above in the  context of Poissonian LPP by Dueschel and Zeitouni in 
\cite{DZ1}.
Still within KPZ universality class, but in the framework of particles systems, functional large deviation principle for Totally Asymmetric Simple Exclusion Process (TASEP), which is closely connected to Exponential LPP, was obtained, for the $n$-speed tail by Varadhan and Jensen \cite{varadhan1, Jensen00} and for the $n^2$-speed tail recently by Olla and Tsai in \cite{OT17}.

However the above results concerning LDP at speed $n^2$, use some form of integrability and the proofs rely heavily on the nature of the passage time distributions which are intimately connected to the integrable features in these models.  Although the large deviation behaviour is expected to be universal, the existence of the rate function was not even known for any other non-integrable model of last passage percolation. It is left to the reader to check that all our arguments will remain valid, in fact become simpler, for a general last passage percolation model (with bounded edge weights, say).

Although as far as we are aware, our result is the first one proving the existence of a large deviation rate function for the $n^2$-speed tail for point to point passage times in a non-integrable setting, one variant of such a result was proved by Chow-Zhang \cite{CZ} in the case of line-to-line first passage time in standard first passage percolation where the open problem addressed by Theorem \ref{t:ldp} was also mentioned.
Formally  Chow-Zhang considers the minimum passage time over all paths with one endpoint in $A=\{(0,i):i\in \{0,1,\ldots, n\}\}$ and the other endpoint in $B=\{(n,i):i\in \{0,1,\ldots, n\}\}$ and moreover they consider the geodesic restricted to lie in the square $[0,n]^2.$ Let us denote the passage time by $\Tt^*_n$. It is a standard result \cite{Kes86} that $\frac{\Tt^*_n}{n}\to \mu$ almost surely as $n\to \infty$. In \cite{CZ}, Chow and Zhang showed that for $\zeta>0$
$$\lim_{n\to \infty} -\frac{\log \P(\sU_{\zeta}(n))}{n^2}$$
exists and is nontrivial. The appropriate variant of their result holds in all dimensions. Even though the  specific geometric setting considered in \cite{CZ} causes significant simplification, and in particular rules out backtracks of the geodesic and does not create a necessity for the metric space dilation approach in this paper,  it is worth mentioning that  the argument in \cite{CZ} is a multi sub-additive argument, which bears resemblance with our  approach at least at a high level (see Section \ref{outline} for more details).

Finally we end this section with a brief discussion about a related line of work concerning geometric consequences of large deviation events in first/last passage percolation.  Formally one considers the measure obtained by conditioning on the large deviation events, and investigates how does the geometry of the random field of weights change? These questions were considered in the setting of exactly solvable Poissonian last passage percolation for the upper tail (i.e., the tail with large deviation speed $n$) by Deuschel and Zeitouni who, in \cite{DZ1}, showed that under the upper tail large deviation event, the maximizing paths between two far away points is with high probability localized around the straight line segment joining the two endpoints. For the harder lower tail case, in a recent paper \cite{BGS17A} we showed that forcing the large deviation event makes the path delocalized with high probability. Although we choose to work in the setting of last passage percolation in the latter, our argument goes beyond the integrable setting under certain distributional assumptions. (see remarks in \cite{BGS17A} for more details.)

\subsection{A brief outline of the paper}\label{outline}
The argument of proving Theorem \ref{t:ldp} is quite involved and has many pieces going into the proof. The purpose of this section is to provide a broad overview of the steps of the argument. At a very high level, our argument intuitively is predicated on the existence of  a limiting metric structure  as in \eqref{e:shape} even in the upper tail large deviation regime, which roughly implies that conditional on the large deviation event, the distances in a fixed direction grow linearly at large scales, and as the direction is varied the gradient changes in a reasonably regular way.
The reason to expect this is intimately tied to the reason behind the $n^2$ speed of large deviation, which causes the edge distributions of $\Theta(n^2)$ many edges to change. 

Although we believe the above statement to be true, for the purposes of the proof it suffices to have sub-sequential limits. In fact the exact statement that we prove in much less refined. 
(see Proposition \ref{t:stable}). 

For the remainder of the paper let $b>0$ and $\nu\in \cP(b)$ be fixed. Recall that $\mu$ denotes the time constant  in the $x$-direction for the standard first passage percolation on $\Z^2$ with $\nu$-distributed edge weights. Let $\zeta\in (0, b-\mu)$ be fixed. For $n\in \N$, let $a_{n}=a_n(\zeta)$ be defined by 
$$ a_{n}=\log \P(\sU_{\zeta}(n)).$$
Theorem \ref{t:ldp} will follow easily from the following multi-subadditive result. 

\begin{proposition}
\label{p:subadditive}
For each $\e>0$, there exists $N_0>0$ such that the following holds. For all $n\in \N$ with $n>N_0$ there exists $M_0=M_0(n)$ such that for all $m>M_0$ we have 
$$ \frac{a_{m}}{m^2} \geq \frac{a_{n}}{n^2} -\e.$$ 
\end{proposition}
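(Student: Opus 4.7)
The plan is a tiling/dilation argument. Choose $n$ large and $m \gg n$. We construct an event $\mathcal{E}_m \subseteq \sU_\zeta(m)$ by tiling $[0,m]^2$ into $(m/n)^2$ disjoint $n\times n$ sub-boxes and requiring each tile to realize a strengthened scale-$n$ large deviation event, call it $\sU^{\mathrm{st}}(n)$. Since tiles depend on disjoint edges, the constraints are independent, so $\log \P(\mathcal{E}_m) = (m/n)^2 \log \P(\sU^{\mathrm{st}}(n))$. If we can arrange (a) $\log \P(\sU^{\mathrm{st}}(n)) \geq a_n - o_n(n^2)$ and (b) $\mathcal{E}_m \subseteq \sU_\zeta(m)$, then $\log \P(\sU_\zeta(m))/m^2 \geq a_n/n^2 - o_n(1)$, which yields the proposition once $n$ is chosen large enough that $o_n(1) \leq \e$.

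The event $\sU^{\mathrm{st}}(n)$ is built using the approximate metric structure from Proposition \ref{t:stable}: it is an event of probability comparable, up to factors $e^{o(n^2)}$, to $\sU_\zeta(n)$, on which the FPP metric restricted to $[0,n]^2$ is approximately a dilated convex norm $\mu^*$ on $\R^2$ with $\mu^*(e_1) \geq \mu + \zeta + \delta_n$, in the sense that for any two boundary vertices $u, v$ the tile-restricted FPP distance is at least $\mu^*(v-u) - \eta_n n$, with margin $\delta_n$ and error $\eta_n n$ to be calibrated. The containment $\mathcal{E}_m \subseteq \sU_\zeta(m)$ is then verified geometrically: any path $\gamma$ from $(0,0)$ to $(m,0)$ decomposes into pieces within consecutive tiles with successive boundary endpoints $p_0 = (0,0), p_1, \ldots, p_K = (m,0)$, and the within-tile bound combined with the triangle inequality for the norm $\mu^*$ gives
\[
\ell(\gamma) \geq \sum_{i=0}^{K-1} \mu^*(p_{i+1}-p_i) - K \eta_n n \geq \mu^*\!\left((m,0)\right) - K \eta_n n \geq (\mu+\zeta+\delta_n) m - K \eta_n n.
\]
For well-behaved paths $K = O(m/n)$, making the total error $O(\eta_n m)$, which is absorbed by the margin $\delta_n m$ provided $\eta_n \ll \delta_n$; paths with abnormally large $K$ are automatically long and are handled separately.

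The main obstacle is the construction of $\sU^{\mathrm{st}}(n)$, namely the approximate limit-shape result of Proposition \ref{t:stable}: one must establish at sub-exponential-in-$n^2$ probability cost that the conditional FPP metric under $\sU_\zeta(n)$ inherits a dilated approximate norm structure simultaneously across \emph{all} relevant direction pairs, not merely along $e_1$. This is where the barrier, corridor, and boosting machinery alluded to in the macro list comes in: intuitively one must show that the $\Theta(n^2)$ boosted edges producing the large deviation must arrange themselves with enough regularity to produce a quasi-deterministic metric. A secondary subtlety is calibrating $\delta_n$ and $\eta_n$ so that both the tile event retains probability $\geq e^{a_n - o(n^2)}$ and the boundary errors in the geometric reduction are absorbed, closing the tiling argument uniformly in $m$.
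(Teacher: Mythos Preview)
Your high-level picture is correct --- independence across tiles to get the $(m/n)^2$ exponent, an approximate-norm structure on the conditioned metric, and a convexity/triangle-inequality step to bound any path --- and this is indeed the skeleton of the paper's argument. But there is a real gap in your implementation.

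You assume that on the strengthened event $\sU^{\mathrm{st}}(n)$ the FPP metric on the \emph{entire} $n\times n$ box is approximated by a \emph{single} convex norm $\mu^*$, so that for any boundary pair $u,v$ the tile-restricted distance is at least $\mu^*(v-u)-\eta_n n$. Proposition~\ref{t:stable} does not give this. It only says that at some mesoscopic scale $n/2^{j}$, most points $\bz$ are stable in the sense that $\PT(\bz,\bz+\ell\theta)$ grows linearly in $\ell$ over a bounded range; the gradient $\grad(\bz,\theta,\ell)$ is allowed to vary from tile to tile. So the conditioned metric looks locally like a norm, but a \emph{different} norm in different sub-tiles $\Ub_{\sC n}(j_1,v)$. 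With your simple tiling of $[0,m]^2$ into $(m/n)^2$ copies of the full $n\times n$ environment, a path entering a tile at $p_i$ and exiting at $p_{i+1}$ cannot be bounded below by a single $\mu^*(p_{i+1}-p_i)$, and your telescoping triangle-inequality step breaks down.

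The paper's fix is to replace your tiling by a genuine \emph{dilation}: one first fixes the stable-tile decomposition of $\Bb(\sC n)$ at scale $n/2^{j_1}$, pigeonholes (Lemma~\ref{proj23}) onto a fixed discretized gradient profile $\Im$ and a fixed set $A$ of unstable tiles, and then builds each large tile $\Ub^*_{\fn_1}(j_1,v)$ of size $\approx m/2^{j_1}$ out of $(m/n)^2$ copies of the \emph{corresponding} small tile $\Ub_{\fn_4}(j_1,v)$ (not of the whole box). This way each big tile inherits the \emph{same} local norm $\|\cdot\|_{(j_1,v)}$ as its small counterpart, and the path-comparison works by scaling a path $\gamma$ in the $m$-environment down to a path $\gamma_{\Sc}$ in the $n$-environment (Lemma~\ref{lb123}), using the approximate convexity of each local norm (Proposition~\ref{conc1}) tile by tile. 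The barriers in $\An^{\rm ext}$ and $\An^{\rm int}$, and the boosting on unstable tiles, then handle the crossings and the $\e$-fraction of bad tiles.

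A second, smaller gap: to get the margin $\mu^*(e_1)\geq \mu+\zeta+\delta_n$ while keeping probability $\geq e^{a_n-o(n^2)}$ you would need exactly the continuity statement of Proposition~\ref{p:cont}, which you do not address. In the paper this is separated out: the dilation construction (Proposition~\ref{p:pathconstruct}) only yields $\sU_{\zeta-\e'}(m)$, and then Proposition~\ref{p:cont} is invoked to recover $\sU_\zeta(m)$ at cost $e^{-\e m^2}$.
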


Most of this paper is devoted to proving Proposition \ref{p:subadditive}, but before we outline its proof let us quickly finish the proof of  Theorem \ref{t:ldp} assuming the above. 

\begin{proof}[Proof of Theorem \ref{t:ldp}]
Let 
\begin{align}\label{limsupinf}
\kappa &= \limsup_{n\to \infty} \frac{a_n}{n^2};~\text{and}\\
\kappa' &= \liminf_{n\to \infty} \frac{a_n}{n^2}.
\end{align}
By Kesten's result \eqref{e:ut}  we know that $-\infty < \kappa' \leq \kappa <0$ and hence it suffices to prove that for all $\e>0,$ we have $\kappa'\geq \kappa -2\e$. Fix $\e>0$ and let $N_0$ be such that the conclusion of Proposition \ref{p:subadditive} holds. Pick $N_1>N_0$ such that $\frac{a_{N_1}}{N_1^2}\geq \kappa -\e/2$, and pick $N_2> M_0(N_1)$ as in Proposition \ref{p:subadditive} such that  
$\frac{a_{N_2}}{N_2^2}\leq \kappa' +\e/2$. Proposition \ref{p:subadditive} now implies that $\kappa'\geq \kappa-2\e$, as required. This completes the proof of the theorem.
\end{proof}

The rest of this paper proves Proposition \ref{p:subadditive}. Observe that to prove the proposition, we need to obtain a lower bound to $\P(\sU_{\zeta}(m))$ in terms of $\P(\sU_{\zeta}(n))$ for $m\gg n\gg 1$. First (and the most important) step is to construct an event with probability at least $\P(\sU_{\zeta}(n))^{m^2/n^2}$ (upto an error of $e^{-o(m^2)}$) on which we shall have $\{\Tt_{m} \geq (\mu+\zeta') m\}$ for $\zeta'$ smaller but arbitrarily close to $\zeta$. 

\textbf{Throughout the article, for notational brevity we will be omitting the floor signs and ignoring any rounding issue since they will not have any effect on the nature of the arguments.}

Formally we have the following proposition. 

\begin{proposition}
\label{p:pathconstruct} 
For each $\e'\in (0,\zeta)$ and $\e>0$, there exists $N_0$ and $H_0$ such that for all $n>N_0$ and $m>nH_0$ we have 
$$ \log \P(\sU_{\zeta-\e'}(m)) \geq \frac{m^2}{n^2}\log \P(\sU_{\zeta}(n))-\e m^2.$$
\end{proposition}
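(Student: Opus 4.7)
The plan is to tile a rectangle of dimensions $\Theta(m)\times\Theta(m)$ surrounding the segment from $\mathbf{0}$ to $(m,0)$ by roughly $(m/n)^2$ disjoint axis-aligned $n\times n$ boxes, and on each tile to impose a \emph{stabilized} version $\Go(n)$ of the upper-tail event whose probability is comparable to $\P(\sU_\zeta(n))$ up to a factor $e^{-o(n^2)}$. Since different tiles involve disjoint edges, independence of $\Go(n)$ across tiles produces the product bound $\P(\Go(n))^{m^2/n^2}\ge \P(\sU_\zeta(n))^{m^2/n^2}e^{-o(m^2)}$, which matches the right-hand side of the claim for $n$ and $m$ sufficiently large. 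What remains is to verify the deterministic lower bound $\Tt_m\ge(\mu+\zeta-\e')m$ on this event.

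The most delicate step is the construction of $\Go(n)$. The raw event $\sU_\zeta(n)$ only constrains the single passage time between $\mathbf{0}$ and $\mathbf{n}$, whereas the $m$-scale geodesic can enter and exit each $n$-tile at arbitrary boundary points in arbitrary directions. I would therefore take $\Go(n)$ to be the event that, simultaneously for every pair of boundary points $u,v$ of the tile with $|v-u|\ge\delta n$, the minimum length of a path from $u$ to $v$ staying inside the tile is at least $\mu(v-u)+(\zeta-\e'/4)|v-u|$, where $\mu(\cdot)$ is the deterministic limit-shape norm and $\delta>0$ is a small constant depending on $\e'$. To obtain $\log\P(\Go(n))\ge\log\P(\sU_\zeta(n))-o(n^2)$ I envisage a two-step boosting: first, use Kesten--Talagrand concentration to argue that on $\sU_\zeta(n)$ most macroscopic boundary pairs already have somewhat enhanced tile-restricted passage times; then, for the remaining ``bad'' pairs, insert local barriers by resampling $O(n)$ edges per pair. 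A union bound over the $O(n^2)$ boundary pairs together with an $e^{-\Theta(n)}$ cost per barrier contributes only a subexponential-in-$n^2$ multiplicative factor.

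With $\Go(n)$ in hand, tile the rectangle $R_m=[0,m]\times[-\alpha m,\alpha m]$ by $\Theta(m^2/n^2)$ disjoint $n\times n$ boxes, choosing $\alpha$ large enough that any path from $\mathbf{0}$ to $(m,0)$ leaving $R_m$ has length exceeding $(\mu+\zeta)m$ except on an event of probability $e^{-\Omega(m)}$ on the unconditioned exterior edges; this follows from standard exit-point concentration \cite{Kes86}. On the intersection of $\Go(n)$ over all tiles with this interior confinement, let $\gamma$ be any path from $\mathbf{0}$ to $(m,0)$ inside $R_m$ and decompose it into tile excursions $(u_i,v_i)$. Each excursion with $|v_i-u_i|\ge\delta n$ contributes length at least $\mu(v_i-u_i)+(\zeta-\e'/4)|v_i-u_i|$; summing and using the triangle inequalities $\sum_i\mu(v_i-u_i)\ge\mu((m,0))=\mu m$ and $\sum_i|v_i-u_i|\ge m$ (both from telescoping the net horizontal displacement) yields $\ell(\gamma)\ge\mu m+(\zeta-\e'/4)m-O(b\delta m)$, where the last error accounts for short excursions of endpoint-distance $<\delta n$. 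Choosing $\delta$ sufficiently small then produces the required $\Tt_m\ge(\mu+\zeta-\e')m$.

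The main obstacle is unquestionably the stabilization, namely the construction of $\Go(n)$ with uniform control over all macroscopic boundary pairs while paying only a subexponential-in-$n^2$ probability cost; the path-decomposition and triangle-inequality arguments in the remaining steps are comparatively routine once the stabilization is available, modulo careful accounting for boundary tiles, short excursions, and detours exiting $R_m$. Optimizing the parameters $\delta$, $\alpha$, $N_0$ and $H_0$ then delivers the proposition.
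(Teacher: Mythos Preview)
Your proposal has a genuine gap at exactly the point you flag as ``the main obstacle'': the construction of $\Go(n)$. You define $\Go(n)$ to require that \emph{every} macroscopic boundary pair $(u,v)$ in the tile satisfies a lower bound of the form $\mu(v-u)+(\zeta-\e'/4)|v-u|$. This is a far stronger event than $\sU_\zeta(n)$, and the two-step boosting you sketch does not close the gap. First, there is no reason to expect that conditioning on $\sU_\zeta(n)$ enhances \emph{all} macroscopic pairwise passage times by an additive $\zeta$-multiple of the distance: a plausible way to realize $\sU_\zeta(n)$ is a single thick barrier perpendicular to the $\mathbf{0}$--$\mathbf{n}$ axis, and in such an environment pairs $(u,v)$ lying on the same side of the barrier have essentially typical (un-enhanced) passage times. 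Second, your barrier repair is quantitatively wrong: to increase a single passage time $\PT(u,v)$ by $\Theta(n)$ when $|u-v|=\Theta(n)$ you must block $\Theta(n)$ edge-disjoint paths each by an amount $\Theta(n)$, which requires altering $\Theta(n^2)$ edges, not $O(n)$. The resulting cost is $e^{-\Theta(n^2)}$ per bad pair, and this swamps the target error $e^{-o(n^2)}$.

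The paper's proof is built precisely to circumvent this. It does \emph{not} attempt to show that the conditioned environment is uniformly $\zeta$-slow in all directions. Instead it establishes (Proposition~\ref{t:stable}) that, conditionally on $\sU^*_\zeta(n)$, at some mesoscopic scale most tiles are \emph{stable}: the distance function from any point in the tile grows approximately linearly in every direction, with a tile-dependent (and a~priori unknown) gradient that is furthermore approximately convex (Proposition~\ref{conc1}). This structural information is then used for a \emph{dilation} argument: one places $(m/n)^2$ independent copies of the stabilized $n$-environment side by side so that corresponding tiles in different copies are adjacent, and for any path $\gamma$ in the $m$-environment one constructs a \emph{scaled-down} path $\gamma_{\Sc}$ in a single $n$-environment with $|\gamma|\ge (m/n)(1-o(1))|\gamma_{\Sc}|$. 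Since $\gamma_{\Sc}$ joins $\mathbf{0}$ to $\mathbf{n}$ in an environment belonging to $\sU_\zeta(n)$, one gets $|\gamma_{\Sc}|\ge(\mu+\zeta)n$ directly, without ever needing uniform pairwise enhancement. The convexity of the tile-gradient is what makes the comparison $|\gamma|\gtrsim(m/n)|\gamma_{\Sc}|$ go through. Your tiling and path-decomposition instincts are sound, but the key mechanism---scaling paths back to a single copy of the base environment rather than demanding a pointwise metric enhancement---is missing.
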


Once we have Proposition \ref{p:pathconstruct} at our disposal, all we need to prove Proposition \ref{p:subadditive} is a way to compare $\P(\sU_{\zeta}(n))$ and $\P(\sU_{\zeta'}(n))$ when $\zeta$ and $\zeta'$ are close. To this end we have the following proposition which essentially says that if the rate function exists it must be continuous in $\zeta$. 

\begin{proposition}
\label{p:cont}
For each $\e>0$, there exists $\e'>0$ such that for all $n$ sufficiently large we have 
$$ \frac{\log \P(\sU_{\zeta-\e'}(n))}{n^2} \leq \frac{\log \P(\sU_{\zeta}(n))}{n^2}+\e.$$
\end{proposition}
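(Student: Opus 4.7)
My plan is to establish the proposition via a boosting argument based on the FKG inequality. The idea is to introduce an increasing ``boost event'' $B_n$ on a local set of edges such that $B_n \cap \sU_{\zeta-\e'}(n)$ is contained in $\sU_\zeta(n)$, while $\log(1/\P(B_n)) \leq \e n^2$; FKG then delivers the desired comparison.

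Concretely, fix $\delta \in (0, b - \mu - \zeta)$ so that $b - \delta > \mu + \zeta$. Let $S$ be the set of edges of $\Z^2$ contained in the rectangle $[n/2,\, n/2+w] \times [-H,\, H]$, with $w$ and $H$ to be determined. Define
\[
B_n := \{X_e \geq b - \delta \text{ for every } e \in S\}.
\]
This is an increasing event satisfying $\P(B_n) = p^{|S|}$, where $p := \nu([b-\delta, b]) > 0$. On $B_n$, any path from $\mathbf{0}$ to $\mathbf{n}$ that stays within the rectangle during its $x$-traversal uses at least $w$ edges of $S$, each of weight $\geq b - \delta$, contributing at least $w(b-\delta)$ to its length. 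Combined with the lower bound $\Tt_n \geq (\mu+\zeta-\e')n$ from $\sU_{\zeta-\e'}$, the goal is to deduce $\Tt_n \geq (\mu+\zeta)n$, i.e., that $B_n \cap \sU_{\zeta-\e'}(n) \subset \sU_\zeta(n)$ up to a negligible event.

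Granted this containment, FKG applied to the two increasing events $B_n$ and $\sU_{\zeta-\e'}(n)$ gives
\[
\P(\sU_\zeta(n)) \geq \P(B_n \cap \sU_{\zeta-\e'}(n)) \geq \P(B_n)\,\P(\sU_{\zeta-\e'}(n)),
\]
which rearranges to $\log \P(\sU_{\zeta-\e'}(n)) \leq \log \P(\sU_\zeta(n)) + |S|\log(1/p)$. Taking $w = \lceil(\mu+\zeta)n/(b-\delta)\rceil$ and $H = h\e n$ for an appropriate constant $h = h(\nu,\zeta,\delta)$, the cost satisfies $|S|\log(1/p) \leq \e n^2$, yielding the required bound with $\e' = \e'(\e,\nu,\zeta)$ chosen to match the boost $w(b-\delta) - (\mu+\zeta-\e')n$ delivered by $B_n$.

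The main obstacle is establishing the containment $B_n \cap \sU_{\zeta-\e'}(n) \subset \sU_\zeta(n)$. The bounds from $B_n$ (lower bound on the $S$-contribution of any path) and from $\sU_{\zeta-\e'}(n)$ (lower bound on the total $\Tt_n$) both constrain $\Tt_n$ and do not simply add; a careful decomposition of $\Tt_n$ into its $S$- and $S^c$-contributions is needed, together with an argument that under $\sU_{\zeta-\e'}(n)$ the $S^c$-contribution along every nearly-optimal path is already large enough that the $S$-boost from $B_n$ tips $\Tt_n$ past $(\mu+\zeta)n$. Additionally, paths with transversal excursion larger than $H$ exit the rectangle vertically and may use fewer than $w$ slab edges, so they must be controlled separately via Cramer-type exponential bounds on the passage time along paths of total length up to $Cn$ (Kesten's estimate that geodesics have length at most $Cn$ outside a set of probability $e^{-\Theta(n)}$ keeps the requisite union bound tractable). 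Balancing the slab parameters $w$, $H$, and $\delta$ against the budget $\e n^2$, and navigating the tension between needing $|S|$ large enough for the boost to be effective and small enough for the FKG cost to be subleading, is the technical heart of the argument.
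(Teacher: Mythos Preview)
Your approach has a genuine gap at the containment step $B_n \cap \sU_{\zeta-\e'}(n) \subset \sU_\zeta(n)$, and the obstacle you identify is in fact fatal rather than merely technical. The fundamental issue is that intersecting with $B_n$ does not \emph{modify} the environment --- it only restricts to environments that already have heavy edges in $S$. Concretely, there exist configurations $\omega$ with every edge in $S$ equal to $b-\delta$ (so $\omega\in B_n$) and with $\Tt_n(\omega)$ exactly $(\mu+\zeta-\e')n$ (so $\omega\in\sU_{\zeta-\e'}$ but $\omega\notin\sU_\zeta$): simply make the environment outside $S$ sufficiently heavy in some other region while the slab $S$ itself contributes only $w(b-\delta)$, which for $w=O(\e n)$ is tiny compared to $(\mu+\zeta-\e')n$. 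Your decomposition into $S$- and $S^c$-contributions cannot rescue this, because under $\sU_{\zeta-\e'}$ the entire excess $(\mu+\zeta-\e')n$ may already be accounted for by the $S^c$-part or by edges of $S$ that were heavy anyway; $B_n$ then adds nothing. Nor do Cramer/Kesten bounds help, since those control \emph{typical} fluctuations, while on $\sU_{\zeta-\e'}$ the environment is already in a large-deviation regime and the geodesic can route around a slab of height $h\e n$ at negligible extra cost.

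The paper's proof is structurally different: rather than intersecting with an increasing event, it takes $\omega\in\sU^*_{\zeta-\e'}$ and \emph{transforms} it into a new environment $\omega'$ by shifting \emph{every} edge weight in the box $\Bb(\sC n)$ upward by a small amount $\e_7$ (with separate handling of edges already near $b$ and edges where the density is near zero, localized via pigeonhole). Because this is a global shift, the geodesic in $\omega'$ --- wherever it goes --- has weight at least $(\mu+\zeta-\e')n$ plus a linear gain from the shift, which for suitable $\e'$ pushes it past $(\mu+\zeta)n$. The probability cost is then controlled via a Radon--Nikodym calculation using the assumed continuous density of $\nu$, and this is precisely where that hypothesis enters. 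The key conceptual difference is that a measure-pushforward genuinely increases all passage times, whereas an event intersection cannot.
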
 
Our assumption of the edge distribution possessing a continuous density (see Definition \ref{d:pt}) is essentially only used in the proof of the above. Although this result can be proven much more generally we have not made such an attempt in this paper.
It is easy to complete the proof of Proposition \ref{p:subadditive} using Propositions \ref{p:pathconstruct} and \ref{p:cont}. 

\begin{proof}[Proof of Proposition \ref{p:subadditive}]
The proof follows immediately by noticing that $$\frac{a_{m}}{m^2} \geq  \frac{\log \P(\sU_{\zeta-\e'}(m))}{m^2}-\e \ge  \frac{a_{n}}{n^2} -2\e,$$ where the first inequality is the content of Proposition \ref{p:cont} and the second inequality is the content of Proposition \ref{p:pathconstruct}.
\end{proof}

The rest of this paper deals with proving Propositions \ref{p:pathconstruct} and \ref{p:cont}. Proof of Proposition \ref{p:cont} is easier. Essentially one shows that to change the passage time $\Tt_n$ by $\e' n$ it suffices to increase the passage times of all the edges inside a box of size $O(n)$ by $O(\e')$.  The cost of such a change can be made as small as possible in the exponential scale by choosing $\e'$ small enough and using the continuity of the density of $\nu$. The only subtle point is that since the variables are supported on $[0,b]$, one cannot increase the values of the edges  that already have values close to $b.$ However by choosing the parameters carefully we ensure that there are not too many edges of the latter kind and that the geodesic necessarily passes through many edges whose values are away from $b$ for which the perturbation strategy works.  The formal proof appears in Section \ref{s:cont}.  The remainder of this section presents an outline of the proof of Proposition \ref{p:pathconstruct}, which is really the heart of this paper.   
\begin{figure}[h]
\centering
\includegraphics[scale=.5]{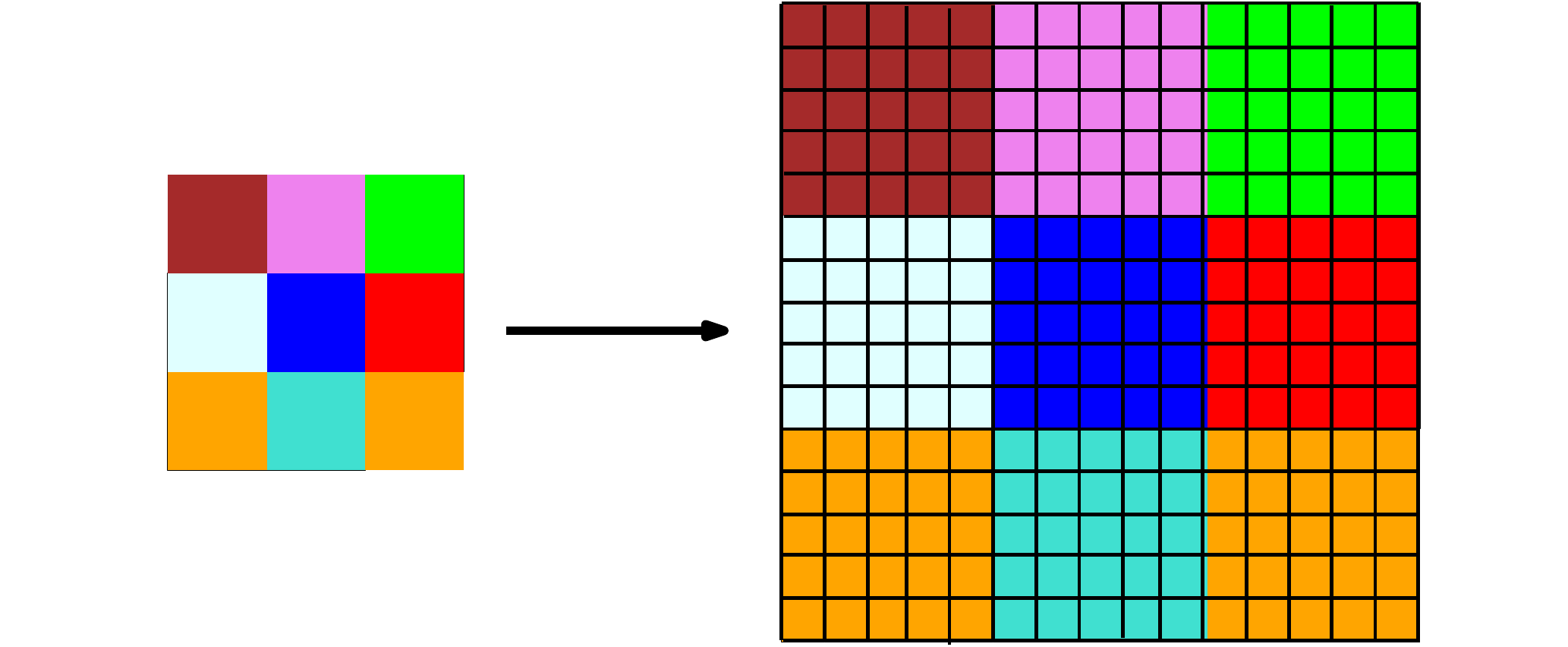}
\caption{As outlined below, the main idea behind the proof of Theorem \ref{t:ldp} involves dilating the limiting metric space structure. The figure illustrates a situation when the dilation factor is $5$.}
\label{fig1}
\end{figure}

For the purpose of facilitating illustration, we shall only outline the proof in the special case $m=2n$. Also we shall pretend, for the time being that the event $\{\Tt_n \geq (\mu+\zeta)n\}$ only depends on the edges weights in the box $B=\llbracket 0, n\rrbracket \times \llbracket-\frac{n}{2}, \frac{n}{2}\rrbracket$ where $\llbracket a,b \rrbracket:= [a,b]\cap \Z$. Observe that this is not deterministically true because the paths are allowed to backtrack. However, we pretend this for the moment for the sake of exposition. In fact the above is true with high probability if one replaces $B$ by a box of side length being a large ($\nu$ dependent) constant times $n$ and centered at the origin. This is what we will do throughout the rest of the paper.

Let $\e$ be an arbitrary small positive number. Suppose that $\P(\Tt_{n}\geq (\mu+\zeta)n)=p$. So our task is to create an environment on $B_1=\llbracket 0, 2n\rrbracket \times \llbracket-n, n\rrbracket$ with probability  at least $p^4$ (upto an error $e^{-o(n^2)}$ on which we shall have $\{\Tt_{2n}\geq (\mu+\zeta-\e)2n\}$. The basic idea of such a construction is as follows. We condition on the large deviation event $\{\Tt_{n}\geq (\mu+\zeta)n\}$ and look at an environment $\omega$ in $B$. We show that with high probability $\omega$ is such that $B$ can be tiled by sub-boxes of size $k\times k$  which we will call `tiles' (see Figure \ref{fig2}), most of the tiles are \textbf{stable}. We describe below roughly the notion of stability which makes precise the notion of a limiting metric space structure as alluded to at the beginning of Section \ref{outline}.
 
\begin{itemize}
\item Consider a tile and for each $\bz$ in the tile and any $\theta \in \bS^1$, let $\bz_1$ and $\bz_2$ be points such that $\bz,\bz_1,\bz_2$ lie in a straight line making angle $\theta$ with the $x-$axis and $\|\bz-\bz_1\|_2=\|\bz_1-\bz_2\|_2=k$ where $\|\cdot\|_2$ denotes the Euclidean norm.   Then the box is said to stable if 
\begin{equation}\label{stab908}
\PT(\bz,\bz_1)=(1+o(1))\PT(\bz_1,\bz_2)=(1+o(1))\frac{\PT(\bz,\bz_2)}{2},
\end{equation}
\end{itemize}
(see Section \ref{gradstabsec} for formal definitions). Thus the above says that for any $\bz$ in the tile, the passage time from $\bz$ acts approximately like a linear function in every direction $\theta$ at scale $k.$  Note that this linear function a priori depends on the environment $\omega.$ However it can be shown that with significant probability the environments $\omega$ approximately yield the same linear function.
Even though for the actual proof we will need a result stronger in many senses, below we illustrate how to exploit the above property.
\begin{figure}[h]
\centering
\includegraphics[scale=.5]{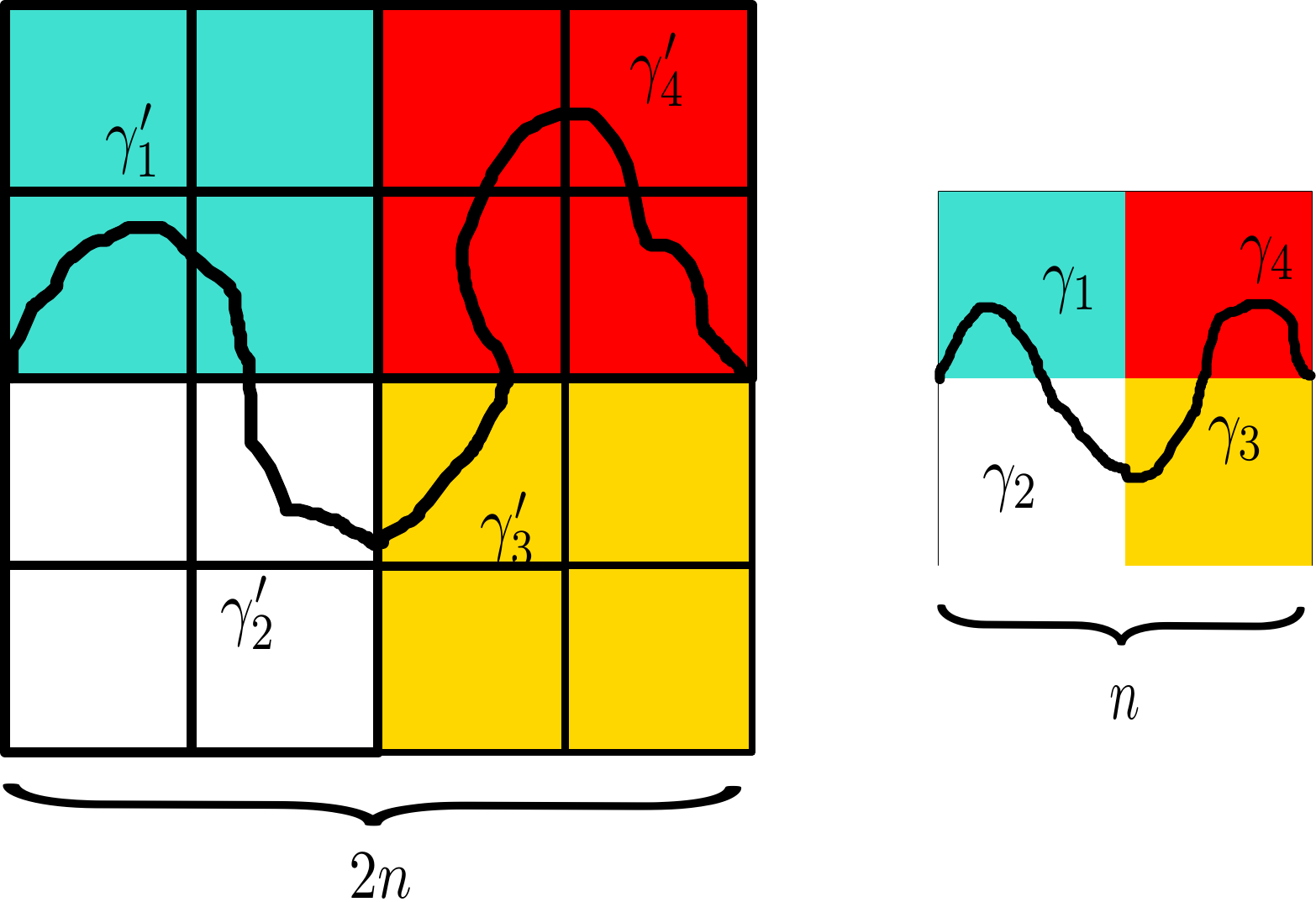}
\caption{Figure illustrating the proof sketch below where for every path $\gamma'$ in the dilated environment $\omega'$ there exists a path $\gamma$ obtained by scaling down the endpoints of the excursions. However note that a priori $\gamma_{i}$ need not be excursions even though $\gamma'_i$ are by definition. The former are just taken to be the shortest path in the environment $\omega$ between the end points of $\gamma'_i$ divided by $2.$  Here $k=\frac{n}{2}.$}
\label{fig2}
\end{figure}

Given  a tiling of $B$ in to stable $k\times k$ tiles we construct an environment on $B_1$ by independently sampling environments $\omega_1,\omega_2,\omega_3,\omega_4$ on $B$ with the same law as $\omega$. Using the latter, we now tile $B_1$ using tiles of size $2k\times 2k$ where each such tile is formed from  $4$ tiles of size $k \times k$ (one from each $\omega_i$)  as illustrated in Figure \ref{fig2}. Let us call the constructed environment $\omega'.$
Given such a construction, we would be done once we establish the following two properties:

\begin{enumerate}
\item The constructed event has probability comparable to $p^4$ which follows quite easily since we picked four independent copies of environments in $\sU_{\zeta}(n)$  to obtain the environment on $B_1$.
\item To show that any path $\gamma'$ in $\omega'$ between $\bo{0}$ and $\bo{2n}$ has length at least $(\mu+\zeta-\e)2n$ for some small $\e.$
\end{enumerate}
To show the latter we decompose $\gamma'$ into excursions $\gamma'_1,\gamma'_2,\ldots$ where each $\gamma'_i$ resides in a tile of size $2k\times 2k$ and $\gamma'_i$ and $\gamma'_{i+1}$ reside in separate tiles. 
Thus $|\gamma'|=\sum_{i}|\gamma'_i|.$
Now the key is to observe that for such a $\gamma'$ one can create a path $\gamma$ in the environment $\omega$ between $\bo{0}$ and $\bo{n}$ such that $\gamma$ is a concatenation of paths $\gamma_1,\gamma_2,\ldots$. This is done by just taking $\gamma_i$ to be the shortest path between points which are the endpoints of $\gamma'_i$ scaled down by a factor $2.$ (see Figure \ref{fig2}).  However note that  $\gamma_{i}$ need not be excursions even though $\gamma'_i$'s  are by definition. The former are just taken to be the shortest path in the environment $\omega$ between the points obtained by dividing the end points of $\gamma'_i$ by $2.$

The stability of the tiles now imply that $|\gamma_{i}|=(1+o(1))\frac{|\gamma'_i|}{2}$. Thus it follows that 
\begin{equation}\label{dila453}
|\gamma'|=\sum_{i}|\gamma'_i|=2(1+o(1))\sum_i|\gamma_i|=2(1+o(1))|\gamma|\ge 2(1+o(1))(\mu+\zeta)n
\end{equation}
where the last inequality follows by definition as $\gamma$ is a path between $\bo{0}$ and $\bo{n}$ in the environment $\omega$ which is in $\sU_{\zeta}(n).$

There are a few obstacles in making this outline rigorous and some work is needed to circumvent them as we briefly outline below.

\noindent 
(1)
The most important step is to prove that $B$ can be divided into such stable tiles. In fact we prove that there exists a tiling of $B$ where most tiles are stable, i.e., the total number of points in unstable tiles is $o(n^2)$. This essentially is a property of a general metric structure on $B$ which is bi-Lipschitz with respect to the Euclidean metric (the fact that the FPP metric has this property is a consequence of the shape theorem in \eqref{e:shape}. We record this observation in Lemma \ref{lb90}).  The formal stability result is Proposition \ref{t:stable} in this paper and the proof is provided in Section \ref{s:sproof} where a detailed outline of the proof and an elaborate explanation of  the key ideas can be found. 

 Intuitively the result says that any sub-sequential limiting metric structure due to its bi-Lipschitz nature should have a reasonably smooth gradient function.  Thus the size of the tiles capture the scale at which an approximate smoothness is witnessed. However formally we show (see Proposition \ref{t:stable}) that all but at most a small fraction of tiles are stable and the unstable tiles can be handled  by replacing all the edge values in those by values close to $b$ (recall that $\nu$ is supported on $[0,b]$). This operation only can increase the passage time and hence makes the upper tail event more likely and on the other hand it only costs $e^{-o(n^2)}$ in probability and hence does not change any of the conclusions.  

\noindent
(2)
Finally we describe briefly another point which we have swept under the carpet so far. All the discussion above describes how to construct a $2n\times 2n$ environment out of an $n\times n$ environment preserving (upto an error) the upper tail large deviation event. However observe that in order to prove Proposition \ref{p:pathconstruct}, we need to be able to dilate the original environment by factor $h=\frac{m}{n}$ which could be arbitrarily large. To ensure that the  error  term $(1+o(1))$ in \eqref{dila453} does not blow up we  will in fact modify the notion of stable tiles which allows dilation by an arbitrary factor $h$. To ensure this we prove that stable tiles have a couple of additional properties:
\begin{itemize}
\item  First of all we need to ensure stability at most locations at many consecutive length scales rather than just two as in \eqref{stab908}.

\item More importantly, we show that as the direction vector is varied at a given location, the gradient field has approximate  convexity properties. This result should be thought of as a weak analogue of the convexity of the limiting shape in \eqref{e:shape} in the upper tail large deviation regime and this will enable us to compare the distance function between the $k\times k$ box and the $kh\times kh$ box.
The formal convexity statement is stated as Proposition \ref{conc1} and the proof is presented in Section \ref{pconc1}.
\end{itemize}

\subsection{Organization}
We finish off this introduction by describing the organization of the remainder of this paper. In Sections \ref{s:grad} and \ref{prelim} we set up the notation and make a precise statement of the stabilization result Proposition \ref{t:stable}. We also make precise definition and statement of the regularity results of the gradient field. The proofs of these results are postponed until later. In Section \ref{s:construct} we use these results to prove Proposition \ref{p:pathconstruct}.  In Sections \ref{s:cont} and \ref{pconc1} we provide the proofs of the continuity of rate function (Proposition \ref{p:cont}) and approximate convexity of the distance function (Proposition \ref{conc1}) respectively. Finally in Section \ref{s:sproof} we  prove the stability result Proposition \ref{t:stable} to complete the argument.
For easy reference, below we summarize some of the notations and the parameters (already defined or to be defined later), that will be used frequently throughout the article.
\begin{longtable}{@{}lll@{}}
  {\bf Notation}
  & {\bf \quad\quad Defined in}
  & {\quad\quad\bf Short Informal Description}\\
\hline  
  $\sU_{\zeta}(n)\, (\sL_{\zeta}(n))$
  & \quad\quad\quad See \eqref{utdef} 
  &\quad  \quad Upper-tail (lower tail) events \\ \quad\quad\quad\quad\quad & \quad\quad\quad\quad\quad 
  & \quad\quad at scale $n$.\\
  $\Pi=(X_e: e\in E(\Z^2))$
  & \quad\quad\quad Section \ref{s:grad}\quad\quad 
  &\quad  \quad Typical noise space.\\
  $ \Pi^{\sU}=(X^{\sU}_{e}: e\in E(\Z^2))$
  & \quad\quad\quad Section \ref{s:grad} 
  &\quad  \quad Noise space conditioned on \\ \quad\quad\quad\quad\quad & \quad\quad\quad\quad\quad 
  & \quad\quad $\sU_{\zeta}(n)$.\\

    $\sC$
  & \quad\quad\quad Lemma \ref{apriori23} \quad\quad 
  &\quad  \quad We will restrict ourselves to a box \\ \quad\quad\quad\quad\quad & \quad\quad\quad\quad\quad 
  & \quad\quad size $\sC n$ outside which the geodesic  \\ 
   \quad\quad\quad\quad\quad & \quad\quad\quad\quad\quad 
  & \quad\quad to $\bo{n}$ does not escape w.h.p..\\
  $\sU^*_{\zeta}(n)\, $
  & \quad\quad\quad See \eqref{e:comparison} \quad\quad 
  &\quad  \quad Upper-tail event restricted \\ \quad\quad\quad\quad\quad & \quad\quad\quad\quad\quad 
  & \quad\quad  inside box of size $4\sC n$.\\

 $\Bb(r)$ ($\Lb(r)$)
  & \quad\quad\quad  Section \ref{s:grad}  \quad\quad 
  &\quad  \quad The continuous box (lattice box)\\ \quad\quad\quad\quad\quad & \quad\quad\quad\quad\quad 
  & \quad\quad  $[-r,r]^2$ ($\llbracket -r,r\rrbracket^2$).\\

 $\bS^1(\eta)$
  & \quad\quad\quad See \eqref{discirc}\quad\quad 
  &\quad  \quad Discretized unit circle:\\ \quad\quad\quad\quad\quad & \quad\quad\quad\quad\quad 
  & \quad\quad $[0,\eta,2\eta,\ldots, 2\pi]$.\\

 $\sE$
  & \quad\quad\quad Lemma \ref{lb90} \quad\quad 
  &\quad  \quad Lower bound on passage times.\\

$\sS(\bz,\theta,\ell,k)$ & \quad\quad\quad See \eqref{disseg}\quad\quad 
  &\quad  \quad Discrete segment.\\ 
  $\PT(\cdot,\cdot)$ & \quad\quad\quad Section \ref{mods} \quad\quad 
  &\quad  \quad Passage time.\\ 
  $\bz \text{ is }(\delta,\theta,\ell,k)-\SST$ & \quad\quad\quad See \eqref{stabnot32}\quad\quad 
  &\quad  \quad Distance function grows linearly.\\
  \quad\quad\quad\quad\quad & \quad\quad\quad\quad\quad 
  & \quad\quad in direction $\theta$. \\

  $\Ub_n(j,v)$ & \quad\quad\quad Definition \ref{deftile543} \quad\quad 
  &\quad  \quad Tile of size $\frac{n}{2^j}$ of $\Bb(n)$ in  \\ \quad\quad\quad\quad\quad & \quad\quad\quad\quad\quad 
  & \quad\quad corresponding to $v\in\brj$.\\

  $\grad(\bz,\theta,\ell)$ & \quad\quad\quad See \eqref{grad87}\quad\quad 
  &\quad  \quad Gradient function. \\

$\Gr_n(j)$ & \quad\quad\quad Section \ref{prelim} \quad\quad 
  &\quad  \quad Points with spacing $\frac{n}{2^j}$ in $\Bb(n).$ \\ 

$\Gr_n(\ell;j)$ & \quad\quad\quad Section \ref{prelim}\quad\quad 
  &\quad  \quad Points with spacing $\ell$ on \\ \quad\quad\quad\quad\quad & \quad\quad\quad\quad\quad 
  & \quad\quad the edges of $\Gr_n(j).$  \\

$  \overset{\eta_1,\ell_1,j_1}{\Pro}_n(\bo{z},\bo{w})$ & \quad\quad\quad See \eqref{projectedfunction}\quad\quad 
  &\quad  \quad Projected distance for pairs\\ \quad\quad\quad\quad\quad & \quad\quad\quad\quad\quad 
  & \quad\quad of points in a grid at scale $\eta_1.$  \\   
  
\hline
\caption{Table of glossaries}
\end{longtable}

\subsection*{Acknowledgements}
Research of RB is partially supported by a Simons Junior Faculty Fellowship and a Ramanujan Fellowship from Govt.\ of India. SG is supported by a Miller Research Fellowship. 

    \section{Formal definitions and notations}
\label{s:grad}

Throughout the remainder of this paper we shall fix a passage time distribution $\nu$ that satisfies the hypothesis of Theorem \ref{t:ldp}, i.e., it is supported on $[0,b]$ with a continuous density function. This in particular implies that passage times are not-concentrated on one point and there is no mass at $0$, which in turn implies that the shape theorem \eqref{e:shape} holds. For this passage time distribution and a direction vector $\bo{v}\in \bS^1$, we shall denote by $\mu_{\bo{v}}$ the time constant in direction $\bo{v}$ (as introduced in the previous section, for $\bo{v}=(1,0)$ we shall drop the subscript). Under these condition one can prove the following basic concentration estimate (see e.g.\ \cite{Kes86}) for each $\e>0$, $\bo{v}\in \bS^1$, some $c>0$ and all $n$ sufficiently large we have ($\lfloor n\bo{v} \rfloor$ is the vertex in $\Z^2$ obtained by taking co-ordinate wise integer parts of $n\bo{v}$):
\begin{equation}
\label{e:concentration1}
\P(|\PT({\mathbf{0}, \lfloor n\bo{v} \rfloor})-\mu_{\bo{v}} n|\geq \e n)\leq e^{-cn}.
\end{equation}
We shall use \eqref{e:concentration1} many times and often implicitly without referring to it. Notice that we are concerned with the large deviation regime whereas \eqref{e:concentration1} is for typical environments. To use it in the large deviation regime we need a tool to compare the environment in the large deviation regime with the typical environment. This is provided by the FKG inequality. Let $\Pi=(X_e: e\in E(\Z^2))$ and $\Pi^{\sU}=(X^{\sU}_{e}: e\in E(\Z^2))$ be the typical and conditional (on $\sU_{\zeta}(n)$) edge weight environments respectively. Let $\Geo_{\sU}(\cdot,\cdot)$ denote the geodesics in the environment $\Pi^{\sU}$. The following lemma is a well known consequence of the FKG inequality (see for e.g. Strassen's Theorem).  

\begin{lem}
\label{FKG}
There exists a coupling $(\Pi,\Pi^{\sU})$ such that almost surely, for each edge $e$ we have $X_{e}\le X^{\sU}_e.$
\end{lem}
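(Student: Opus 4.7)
The plan is to verify that $\sU_{\zeta}(n)$ is an increasing event in the edge configuration, apply the FKG inequality to deduce that conditioning on it produces a measure stochastically dominating the unconditional law, and then invoke Strassen's theorem to promote this stochastic domination to a pointwise almost sure coupling.

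\textbf{Step 1: Monotonicity.} For any fixed finite path $\gamma$ joining $\mathbf{0}$ and $\mathbf{n}$, the passage time $\ell(\gamma)=\sum_{e\in\gamma} X_e$ is a coordinatewise non-decreasing function of the configuration $\Pi\in[0,b]^{E(\Z^2)}$. Since the infimum of a countable collection of non-decreasing functions is non-decreasing, $\Tt_n=\inf_{\gamma}\ell(\gamma)$ is itself a coordinatewise non-decreasing function of $\Pi$, and hence $\sU_{\zeta}(n)=\{\Tt_n\geq (\mu+\zeta)n\}$ is an increasing event.

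\textbf{Step 2: FKG and stochastic domination.} Because $\Pi$ is a product measure, the FKG inequality is automatic: for every bounded measurable coordinatewise non-decreasing function $f\colon[0,b]^{E(\Z^2)}\to\R$ (a priori depending on finitely many coordinates, then extended by monotone approximation),
\begin{equation*}
\E[f(\Pi)\,\mathbf{1}_{\sU_{\zeta}(n)}]\;\geq\;\E[f(\Pi)]\,\P(\sU_{\zeta}(n)).
\end{equation*}
Since $\P(\sU_{\zeta}(n))>0$ by Kesten's estimate \eqref{e:ut}, dividing yields $\E[f(\Pi^{\sU})]\geq \E[f(\Pi)]$. This is precisely the statement that $\Pi^{\sU}$ stochastically dominates $\Pi$ with respect to the coordinatewise partial order on $[0,b]^{E(\Z^2)}$.

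\textbf{Step 3: Strassen's theorem.} The space $[0,b]^{E(\Z^2)}$ is a compact metrizable Polish space, equipped with a closed partial order. In this setting Strassen's theorem asserts that stochastic domination is equivalent to the existence of a monotone coupling. Applying it to the pair $(\Pi,\Pi^{\sU})$ produces a joint law supported on the set $\{(X,X^{\sU})\colon X_e\leq X_e^{\sU}\text{ for every }e\in E(\Z^2)\}$, which is the desired coupling.

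The only mildly delicate point is the passage from FKG applied to cylinder functions (depending on finitely many edges) to the infinite-dimensional domination statement used by Strassen; this is handled by a routine monotone-class/monotone-convergence argument, so no real obstacle arises and the lemma follows directly from these standard ingredients.
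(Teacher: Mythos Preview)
Your proof is correct and follows exactly the approach indicated by the paper, which does not spell out a full argument but simply states that the lemma is a well-known consequence of the FKG inequality together with Strassen's theorem. You have filled in precisely those details: monotonicity of $\sU_{\zeta}(n)$, FKG to get stochastic domination, and Strassen to obtain the monotone coupling.
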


There are two main consequences of Lemma \ref{FKG} that will be useful for us. First, this will provide lower bounds on the FPP metric conditional on $\sU_{\zeta}$, and second, it will enable us to restrict our attention to finite boxes. Before we proceed with the relevant statements, we extend the function $\PT$ from $\Z^2\times \Z^2$ to $\R^2\times \R^2$; this will reduce notational complexities significantly. There is not one canonical way to this, we choose the following extension for concreteness. For every $x,y\in \R^2$ define $\PT(x,y):=\PT(\hat x, \hat y)$ where $\hat x$ and $\hat y$ are the nearest lattice points to $x,y$ respectively, (in case of a tie, we choose the one which is smallest in the usual lexicographic order on $\Z^2.$). We introduce some more useful notations. Throughout we will use $\Bb(r)$ (resp.\ $\Lb(r)$) to denote the box  $[-r,r]^2\subseteq \R^2$ (resp.\ the box $\llbracket -r,r \rrbracket^2 \subseteq \Z^2$).

The next lemma shows that, geodesics do not wander too much even in the large deviation regime. Let $\mu_{\min}=\min_{v\in \bS^1} \mu_v$. It a consequence of \eqref{e:shape} that $\mu_{\min}>0$. Let us fix $\sC= \frac{4b}{\mu_{\min}}$. This $\sC$ will be important for us and will be fixed throughout the paper. 

\begin{lem}
\label{apriori23} 
For all $\zeta \in (0, b-\mu)$There exists $c>0$ such that for all $n$ sufficiently large we have,  
$$\P(\Geo_{\sU}(\bo{0}, \bo{n})\subset \Bb(\sC n))\geq 1-e^{-cn}.$$ 
\end{lem}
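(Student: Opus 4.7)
The plan is to combine the FKG stochastic domination of Lemma \ref{FKG} with the concentration estimate \eqref{e:concentration1} to reduce the conditional claim to an unconditional ``geodesics do not wander'' bound. The starting observation is that, since $\nu$ is supported on $[0,b]$, the $x$-axis path yields the deterministic upper bound $\Tt_n^{\sU} \leq b n$. On the other hand, if $\Geo_{\sU}(\bo{0}, \bo{n})$ were to leave $\Bb(\sC n)$, it would contain a first-exit lattice vertex $v$ with $\|v\|_\infty \in \{\sC n, \sC n+1\}$, in which case $\Tt_n^{\sU} \geq \PT^{\sU}(\bo{0}, v)$. Since $\sC = 4b/\mu_{\min}$, the typical value of this passage time is at least $\mu_{\min}\sC n = 4bn$, which would exceed $b n$ and give the required contradiction.

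To make this precise, apply \eqref{e:concentration1} in the direction $\bo{w} = v/\|v\|_2$ at scale $m = \|v\|_2$ with error parameter $\e = \mu_{\min}/2$ (the negligible rounding between $v$ and $\lfloor m \bo{w} \rfloor$ is harmlessly absorbed). This yields
$$\P\!\left(\PT(\bo{0}, v) < \bigl(\mu_{\bo{w}} - \tfrac{\mu_{\min}}{2}\bigr)\|v\|_2\right) \leq e^{-c\|v\|_2} \leq e^{-c\sC n}$$
for some $c = c(\nu) > 0$. Since $\mu_{\bo{w}} \geq \mu_{\min}$ and $\|v\|_2 \geq \sC n$, the complementary event gives $\PT(\bo{0}, v) \geq \tfrac{\mu_{\min}}{2} \sC n = 2bn$. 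Under the coupling of Lemma \ref{FKG} we have $X_e \leq X_e^{\sU}$ for every edge, hence $\PT(\bo{0}, v) \leq \PT^{\sU}(\bo{0}, v)$ almost surely, so the same tail bound transfers verbatim to $\PT^{\sU}(\bo{0}, v) \geq 2bn$ under the conditional law.

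A union bound over the $O(n)$ candidate vertices $v$ with $\|v\|_\infty \in \{\sC n, \sC n+1\}$ then shows that, outside an event of conditional probability at most $e^{-c'n}$ (for some $c' > 0$), every such $v$ satisfies $\PT^{\sU}(\bo{0}, v) \geq 2bn$. On this good event any path starting at $\bo{0}$ and touching $\partial \Bb(\sC n)$ has length at least $2bn > bn \geq \Tt_n^{\sU}$, so $\Geo_{\sU}(\bo{0}, \bo{n})$ cannot exit $\Bb(\sC n)$, establishing the lemma. I do not anticipate any substantive obstacle: the only minor technicalities are the lattice rounding in \eqref{e:concentration1} and the polynomial loss from the union bound, both readily absorbed into the exponential rate.
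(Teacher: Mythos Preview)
Your proposal is correct and follows essentially the same route as the paper: use \eqref{e:concentration1} together with a union bound over the boundary lattice points of $\Bb(\sC n)$ to obtain $\PT(\bo{0},\Z^2\setminus\Bb(\sC n))\ge 2bn$ except with exponentially small probability, transfer this to $\Pi^{\sU}$ via the monotone coupling of Lemma \ref{FKG}, and then contrast with the deterministic bound $\Tt_n^{\sU}\le bn$. You have simply made explicit the contradiction step and the choice $\e=\mu_{\min}/2$ that the paper leaves implicit.
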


\begin{proof} 
Observe that \eqref{e:concentration1} together with an union bound over the lattice points on the boundary of $\Bb(\sC n)$ implies that  $\PT(\mathbf{0}, \Z^2\setminus \Bb(\sC n))\ge 2 bn$ with exponentially small failure probability in the typical environment. Lemma \ref{FKG} implies that the same is true for the environment $\Pi^{\sU}$.
\end{proof}

The next lemma shows that the in the environment $\Pi^{\sU}$, with high probability the FPP metric within $\Bb(\sC n)$ is lower bounded by a constant multiple of the Euclidean metric. 

\begin{lem}
\label{lb90}
There exists $\alpha, c>0$ such that for all sufficiently large $n$, with conditional ( on $\sU_{\zeta}$) probability at least $1-e^{-c\sqrt{n}}$ the following holds: for any two points $\bz, \bw \in \Bb(4\sC n)$ with $|\bz-\bw|\ge \sqrt{n},$ the passage time between $\bz$ and $\bw$ restricted inside $\Bb(4\sC n)$ is at least  $\alpha |z-w|.$ 
\end{lem}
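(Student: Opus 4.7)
The plan is to first use the monotone coupling of Lemma \ref{FKG} to reduce the claim to an analogous uniform lower bound on passage times under the unconditional product measure $\Pi$, and then combine the concentration estimate \eqref{e:concentration1} with a union bound over pairs of lattice points in $\Lb(4\sC n)$.

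For the reduction, observe that for any fixed $\alpha > 0$, the event that $\PT(\bz, \bw) \geq \alpha |\bz - \bw|$ simultaneously for every pair $\bz, \bw \in \Lb(4\sC n)$ with $|\bz - \bw| \geq \sqrt{n}$ is increasing in the edge weights. Under the coupling of Lemma \ref{FKG}, every path length is at least as large under $\Pi^{\sU}$ as under $\Pi$, so the passage time in $\Pi^{\sU}$ \emph{restricted} to paths inside $\Bb(4\sC n)$ is at least the unrestricted $\PT(\bz, \bw)$ computed under $\Pi$. Hence it suffices to prove the analogous unconditional statement with probability $\geq 1 - e^{-c\sqrt{n}}$. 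The convention $\PT(x,y) = \PT(\hat x, \hat y)$ then extends the lattice statement to real points in $\Bb(4\sC n)$, after adjusting $\sqrt{n}$ and $\alpha$ by absolute constants.

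For a single ordered pair of lattice points $(\bz, \bw)$ with $d := |\bz - \bw| \geq \sqrt{n}$, I would set $\alpha := \mu_{\min}/2$ and apply translation invariance together with \eqref{e:concentration1} in direction $\bo{v} := (\bw - \bz)/d$ with $\e := \mu_{\min}/2$. Since $\mu_{\bo{v}} \geq \mu_{\min}$, the event $\{\PT(\bz, \bw) < \alpha d\}$ is contained in $\{|\PT(\mathbf{0}, d\bo{v}) - \mu_{\bo{v}} d| \geq (\mu_{\min}/2) d\}$, whose probability is at most $e^{-c_0 d} \leq e^{-c_0 \sqrt{n}}$ for $n$ large. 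One minor technical point here is that the rate $c_0$ must be chosen uniformly over $\bo{v} \in \bS^1$; this is a standard strengthening of \eqref{e:concentration1} that follows from the shape theorem \eqref{e:shape} together with continuity and strict positivity of $\bo{v} \mapsto \mu_{\bo{v}}$ on the compact set $\bS^1$, and is the only non-trivial point in the proof.

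Finally, a union bound over the $O(n^4)$ ordered lattice pairs in $\Lb(4\sC n)$ yields total failure probability at most $C n^4 \cdot e^{-c_0 \sqrt{n}} \leq e^{-c\sqrt{n}}$ for any $c < c_0$ and all $n$ large enough. Combined with the monotone coupling from Step 1, this gives the claim. The argument is robust enough that the crude bound $n^4$ is harmless because the per-pair estimate decays at the much faster rate $e^{-c_0 \sqrt n}$.
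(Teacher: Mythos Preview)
Your proof is correct and follows essentially the same route as the paper: reduce to the unconditional measure via the FKG coupling of Lemma \ref{FKG}, apply the concentration estimate \eqref{e:concentration1} with $\alpha=\tfrac{1}{2}\mu_{\min}$ to each pair, and union bound over the $O(n^4)$ lattice pairs in $\Lb(4\sC n)$. The paper's proof is a one-line sketch of exactly this argument; your version simply spells out the details, including the (legitimate) observation that the rate $c_0$ in \eqref{e:concentration1} must be taken uniform over $\bS^1$.
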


This high probability event described above will be useful for us, and we denoted by $\sE$ for future reference.

\begin{proof}
The lemma follows by taking a union bound over all pairs of points in $\Lb(4 \sC n)$ with mutual distance at least $\sqrt{n}-3$, and using Lemma \ref{FKG} together with \eqref{e:concentration1} (take $\alpha= \frac{1}{2}\mu_{\min}$ for example). 
\end{proof}

Thus from the above lemmas we can restrict ourselves to  $\Bb(4\sC n)$ by defining the event $\sU^*_{\zeta}=\sU_{\zeta}\cap \sE$ where we take $\alpha=\frac{1}{2}\mu_{\min}.$ 
Notice that $\sU^*_{\zeta}$ is just a function of the edges in $\Bb(4\sC n).$
Now by Lemma \ref{lb90}
\begin{equation}
\label{e:comparison}
(1-e^{-c\sqrt n})\P(\sU^*_{\zeta})\leq \P(\sU_{\zeta}) 
\end{equation}
This allows us to work with $\sU^*_{\zeta}$ instead of $\sU_\zeta$ and this is what we will do throughout the article. 

\subsection{Gradients and  stability}\label{gradstabsec}
To precisely state the stabilization that we have alluded to, we need to develop some more notation. For our purposes, we shall be comparing distance functions for fixed directions, so we introduce the following notation. For $\bz \in \R^2,$ and $\theta \in \mathbb{S}^1$ (the unit circle), let $\L_{\theta, \bz}=\{\bz+\lambda\theta: \lambda>0\}$, i.e., in the standard parametrization of $\mathbb{S}^1$, $\L_{\theta, \bz}$ denote the the ray starting from $\bz$ in the direction $\theta$. 
 We shall consider a sequence of equally spaced points along $\L_{\theta, \bz}$ defined as follows. For $\bz\in \R^2, \theta\in \mathbb{S}^1, k\in \N$ and $\ell>0$, let us define the discrete segment 
\begin{equation}\label{disseg}
\sS(\bz,\theta,\ell,k)=[\bz_0,\bz_1,\ldots,\bz_k]
\end{equation}
where $\bz_0=\bz$ and $\bz_{i+1}=\bz_i+\ell \theta$, see Figure \ref{fig3}.
\begin{figure}[h]
\centering
\includegraphics[scale=.5]{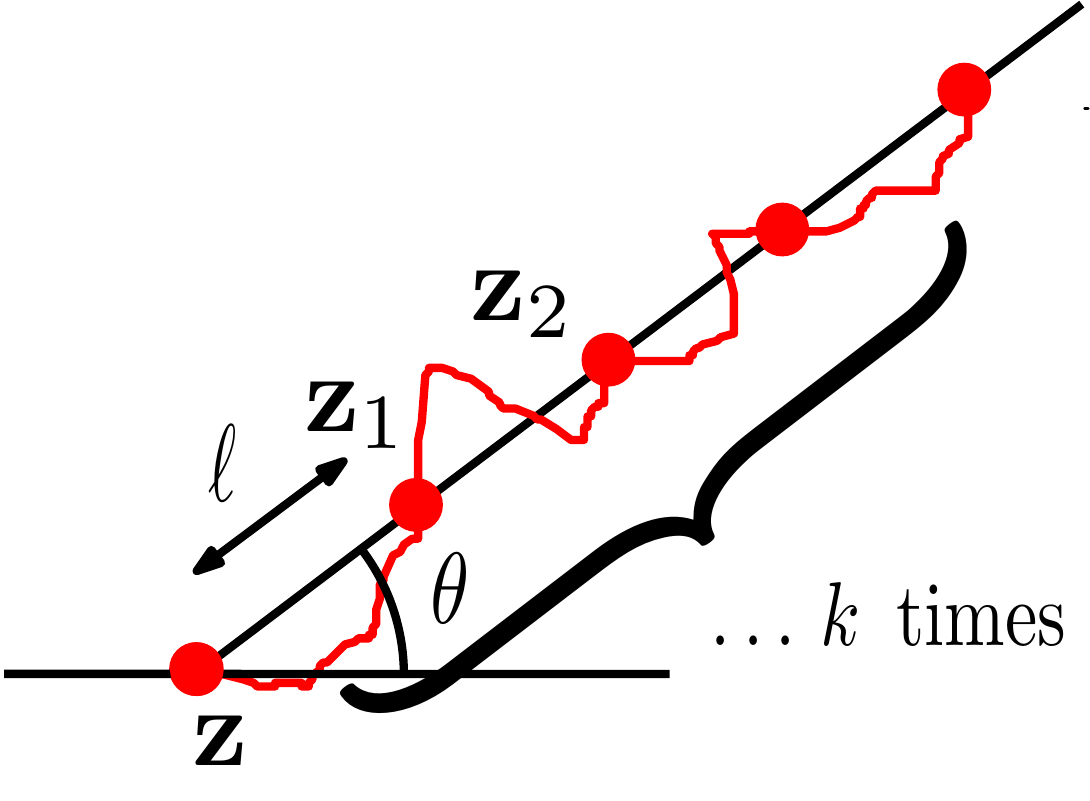}
\caption{$k$ points spaced at distance $\ell$ along a line making angle $\theta$ with the $x-$axis forming $\sS(\bz,\theta,\ell,k)$.}
\label{fig3}
\end{figure}

We define the passage time for the segment $\sS$ by 
\begin{equation}\label{pt12}
\PT(\bz,\theta,\ell,k):=\sum_{i=0}^{k-1} \PT(\bz_i,\bz_{i+1}).
\end{equation}

Now note that the starting point and ending points of $\sS({\bz,\theta, \frac{\ell}{2}, 2k})$  and $\sS({\bz,\theta, \ell, k})$ are the same  and the former is obtained from the latter by subdividing subintervals of the latter in to equal halves.

As an easy consequence of the triangle inequality we have the following straightforward lemma. 
\begin{lem}\label{monotone}
$\PT(\bz,\theta,\frac{\ell}{2}, 2k)\ge \PT(\bz,\theta,\ell,k).$  
\end{lem}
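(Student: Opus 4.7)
The plan is to apply the triangle inequality of the extended first passage metric at each of the midpoints introduced by the finer subdivision. Specifically, write $\sS(\bz,\theta,\ell,k)=[\bz_0,\bz_1,\ldots,\bz_k]$ with $\bz_i=\bz+i\ell\theta$, and $\sS(\bz,\theta,\ell/2,2k)=[\bw_0,\bw_1,\ldots,\bw_{2k}]$ with $\bw_j=\bz+j(\ell/2)\theta$. Then $\bw_{2i}=\bz_i$, while $\bw_{2i+1}=\bz_i+(\ell/2)\theta$ is the midpoint of the segment $[\bz_i,\bz_{i+1}]$.

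Next, I would note that the extended passage time $\PT$ on $\R^2\times\R^2$, defined by rounding to nearest lattice points, still satisfies the triangle inequality, because $\PT$ on $\Z^2\times\Z^2$ is a first passage time (hence a pseudo-metric): for any $x,y,w\in\R^2$,
\[
\PT(x,y)=\PT(\hat x,\hat y)\le \PT(\hat x,\hat w)+\PT(\hat w,\hat y)=\PT(x,w)+\PT(w,y).
\]
Applying this with $x=\bz_i$, $y=\bz_{i+1}$, $w=\bw_{2i+1}$ yields, for each $i\in\{0,1,\ldots,k-1\}$,
\[
\PT(\bz_i,\bz_{i+1})\le \PT(\bw_{2i},\bw_{2i+1})+\PT(\bw_{2i+1},\bw_{2i+2}).
\]

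Summing over $i$ then telescopes the right-hand side into the full refined passage time:
\[
\PT(\bz,\theta,\ell,k)=\sum_{i=0}^{k-1}\PT(\bz_i,\bz_{i+1})\le \sum_{j=0}^{2k-1}\PT(\bw_j,\bw_{j+1})=\PT(\bz,\theta,\ell/2,2k),
\]
which is the desired inequality. There is no real obstacle here; the only point requiring even a brief check is that the ad hoc extension of $\PT$ to $\R^2$ (via nearest lattice points, with a lexicographic tiebreaker) preserves the triangle inequality, which is immediate from the above one-line computation.
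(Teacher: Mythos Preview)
Your proof is correct and follows exactly the approach the paper indicates: the lemma is stated there as ``an easy consequence of the triangle inequality'' with no further proof given, and your argument spells out precisely that computation. Your additional remark that the extension of $\PT$ to $\R^2$ via nearest-lattice-point rounding preserves the triangle inequality is a nice clarification that the paper leaves implicit.
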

The main arguments in this paper rely on a notion of stability of the passage time at a point $\bz.$
 Fix a tolerance parameter $\delta>0.$  For $k\in \N$, $\ell>0$ and $\theta\in \mathbb{S}^1$, we say that $\bz\in \R^2$ is $(\delta, \theta, \ell, k)-\SST$ (with respect to any edge weight configuration $\Pi$) if  for $1\le k'\le k,$
\begin{align}\label{stabnot32}
\frac{k'\PT(\bz,\theta,\ell,1)}{(1+\delta)}\le \PT(\bz,\theta,\ell k', 1)\le(1+\delta)k' \PT(\bz,\theta,\ell,1).
\end{align}

In words, $\bz\in \R^2$ is $(\delta, \theta, \ell, k)-\SST$ if the passage time from $z$  to $z+(\theta, \ell k')$ can be approximated up to a $(1+\delta)$ multiplicative error by $k'$ times the passage time from $z$  to $z+(\theta, \ell)$ for all $1\le k'\le k$. This captures the linear growth of the distance function.

In the following for convenience we would work with a discretized version of $\bS^1$. For any $\eta>0,$ let 
\begin{equation}\label{discirc}
\bS^1(\eta)=\{0,\eta,2\eta,\ldots 2\pi-\eta\}.
\end{equation}
 ($\eta$ is assumed to have the required properties to avoid rounding issues. Also throughout the article, we will use $\theta$ interchangeably to denote an angle or a unit vector making the corresponding angle with the $x-$axis. The usage will be clear from context.)
In the sequel we will say that $\bz$ is $(\delta, \bS^1(\eta),\ell,k)-\SST$  if $\bz$ is $(\delta,\theta,\ell,k)-\SST$ for each $\theta \in \bS^1(\eta)$ and similarly we will say that $\bz$ is $(\delta,\ell,k)-\SST$ if $\bz$ is $(\delta,\theta,\ell,k)-\SST$ for each $\theta \in \bS^1$.

With this preparation, we can now state  an initial version of our stabilization result. 

\begin{ppn}
\label{t:stable}
Fix $\delta,\e,\eta >0,$ and $k\in \N$ and $J_1\in \N$. There exists $J_2\in \N$  such that for all large enough $n$, conditioned on $\sU^*_{\zeta}(n)$ the following holds: there exists $J_1\leq j\leq J_2$  (random depending on $\Pi \in \sU^*_{\zeta}(n)$ ) such that 
$$\#\{\bz\in \Lb(\sC n): \bz~\text{is not}~(\delta, \bS^1(\eta),\frac{\sC n}{2^j},k)-\SST\} \leq \e n^2 .$$
\end{ppn}

Proof of Proposition \ref{t:stable} is rather technical and is postponed until Section \ref{s:sproof}. This is one of the three main ingredients of our proof, and we state this result in terms of $\Lb(\sC n)$ so that it can directly be fed into many of the later arguments. However, for the next few definitions and results it will be notationally convenient to work with boxes of size $n$.

We next define the gradient function for $\SST$ points naturally in the following way:  For $\theta \in \bS^1,$ and $\ell \in \N,$ let
\begin{equation}\label{grad87}
\grad(\bz,\theta,\ell)=\frac{\PT(\bz, \bz+ (\theta,\ell))}{\ell}.
\end{equation}
An easy consequence of the notion of stability is that the gradient function stays almost constant over a range of values of $\ell.$

\begin{lem}
\label{eascons} 
Fix $j\in \N$. On the event $\sE$ (see Lemma \ref{lb90}), for all sufficiently large $n$, for any $\ell\ge \frac{n}{2^j}$, and for any $(\delta,\bS^1(\eta), \ell,k)-\SST$  point $\bz,$ for any $\frac{k}{4}\ell \le \ell', \ell''\le k\ell,$ and for any $\theta \in \bS^1$
$$\grad(\bz,\theta,\ell')=\left(1+O\bigl(\eta+\delta +\frac{1}{k}\bigr)\right)\grad(\bz,\theta,\ell'').$$
\end{lem}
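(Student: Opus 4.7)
The plan is to compare both $\grad(\bz,\theta,\ell')$ and $\grad(\bz,\theta,\ell'')$ to a common reference gradient $\grad(\bz,\theta',\ell)$, where $\theta' \in \bS^1(\eta)$ is chosen nearest to $\theta$ (so $|\theta-\theta'|\le \eta$). This decomposes the problem into two independent discretization steps: passing from the general direction $\theta$ to the discretized $\theta'$ (where stability is available), and passing from an arbitrary length $\ell'$ to an integer multiple $k'\ell$ (where stability gives us a statement). In both steps we pay an additive error in the passage time, which is converted to a multiplicative error on the gradient via the uniform lower bound $\grad \ge \alpha$ coming from the event $\sE$ (Lemma~\ref{lb90}); this lower bound applies because the hypothesis $\ell\ge n/2^j$ forces every segment length considered to exceed $\sqrt{n}$ once $n$ is large enough.

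\textbf{Directional discretization.} Since all edge weights lie in $[0,b]$, the passage time between any two lattice points is bounded above by $b$ times their $L^1$ distance along an axis-aligned path. The Euclidean distance between $\bz+(\theta,\ell')$ and $\bz+(\theta',\ell')$ is $2\ell'\sin(|\theta-\theta'|/2)\le \ell'\eta$, so the triangle inequality gives
\[
|\PT(\bz,\bz+(\theta,\ell'))-\PT(\bz,\bz+(\theta',\ell'))|\le \PT(\bz+(\theta,\ell'),\bz+(\theta',\ell'))\le Cb\,\ell'\eta+O(1)
\]
for a universal constant $C$. Dividing by $\ell'\ge k\ell/4$ produces an additive gradient error of $O(\eta)$, and using $\grad(\bz,\theta,\ell')\ge \alpha$ from $\sE$ converts this to the multiplicative bound $\grad(\bz,\theta,\ell')=(1+O(\eta))\grad(\bz,\theta',\ell')$.

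\textbf{Length discretization and stability.} Set $k':=\lfloor \ell'/\ell\rfloor$, so that $k'\in[\lceil k/4\rceil, k]$ (assuming $k\ge 4$) and $|\ell'-k'\ell|<\ell$. By the same upper-bound-by-$b$-times-$L^1$ argument,
\[
|\PT(\bz,\bz+(\theta',\ell'))-\PT(\bz,\bz+(\theta',k'\ell))|\le b|\ell'-k'\ell|+O(1)=O(\ell).
\]
Combining this with $k'\ell/\ell'=1+O(1/k)$ (since $\ell'\ge k\ell/4$) and again using $\grad\ge\alpha$, one obtains $\grad(\bz,\theta',\ell')=(1+O(1/k))\grad(\bz,\theta',k'\ell)$. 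Since $\theta'\in \bS^1(\eta)$ and $1\le k'\le k$, the stability hypothesis \eqref{stabnot32} directly gives $\grad(\bz,\theta',k'\ell)=(1+O(\delta))\grad(\bz,\theta',\ell)$. Chaining the two displays yields $\grad(\bz,\theta,\ell')=(1+O(\eta+\delta+1/k))\grad(\bz,\theta',\ell)$, and the identical argument with $k'':=\lfloor\ell''/\ell\rfloor$ gives $\grad(\bz,\theta,\ell'')=(1+O(\eta+\delta+1/k))\grad(\bz,\theta',\ell)$. Dividing these two relations proves the lemma.

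\textbf{Main obstacle.} The primary subtlety is the careful conversion of additive errors into multiplicative ones; this is precisely where the event $\sE$ is used. Without the bi-Lipschitz lower bound, an additive passage-time error of size $\eta\ell'$ could not be absorbed into a $(1+O(\eta))$ multiplicative factor. Everything else is routine bookkeeping, relying on the fact that the $O(1)$ rounding errors introduced by extending $\PT$ to $\R^2$ and by taking floors in the length discretization are negligible once $n$ (and hence $\ell$) is taken large enough that Lemma~\ref{lb90} applies at every scale involved.
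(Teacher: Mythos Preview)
Your proof is correct and follows essentially the same approach as the paper: approximate a general $\theta$ by the nearest $\hat\theta\in\bS^1(\eta)$ via the triangle inequality (the paper's \eqref{tria1}), and then invoke the stability hypothesis along $\hat\theta$. You are in fact more careful than the paper, which simply asserts that the $\theta\in\bS^1(\eta)$ case ``follows immediately from definition of stability'' without spelling out the length-discretization step (rounding $\ell'$ to $k'\ell$) that produces the $O(1/k)$ term, and you also make explicit how the event $\sE$ converts additive errors into multiplicative ones.
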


\begin{proof}
The above lemma without the $O(\eta)$ term in the multiplicative factor follows immediately from definition of  stability for all $\theta$ in $\bS^1(\eta)$. However we need to extend this to all $\theta\in \bS^{1}$, and a further approximation is necessary. For any $\theta\in \bS^1$ let $\hat \theta$ be the closest point in $\bS^1(\eta).$ Then by triangle inequality for any $\ell$, it follows that 
\begin{align}\label{tria1}
\PT(\bz,\bz+(\theta,\ell))&\le \PT(\bz,\bz+(\hat \theta,\ell))+b\eta \ell,\\
\nonumber
\PT(\bz,\bz+(\hat \theta,\ell))&\le \PT(\bz,\bz+( \theta,\ell))+b\eta \ell;
\end{align}
since the edge variables are bounded by $b$. This completes the proof of the lemma with the addition of the $O(\eta)$ term in the multiplicative error.
\end{proof}

Note that Proposition \ref{t:stable} claims that most points in $\Bb(\sC n)$ are stable. 
We now prove a  stable point implies  stability in a neighbourhood with slightly worse parameters.

\begin{lem}
\label{stab23} 
For $k>C>m>0$, $\ell>0$ and for any $\bz$ which is $(\delta,\theta,\ell,k)-\SST$  and any $\bz'$ 
such that $|\bz-\bz'|\le \ell m$ we have $\bz'$ is
$(\delta',\theta,C \ell, \frac{k}{C})-\SST,$  where $\delta'= \delta +O(\frac{m}{C}).$
\end{lem}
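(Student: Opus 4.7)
The plan is to transfer the $(\delta,\theta,\ell,k)$-stability of $\bz$ to a $(\delta',\theta,C\ell,k/C)$-stability of the nearby point $\bz'$ using the triangle inequality together with a bi-Lipschitz lower bound on passage times. Concretely, for every $1\le k'\le k/C$ I must check that
\[
\PT(\bz',\bz'+C\ell k'\theta) = \bigl(1 + O(\delta + m/C)\bigr)\,k'\,\PT(\bz',\bz'+C\ell\theta),
\]
which is precisely the $(\delta',\theta,C\ell,k/C)$-stability condition with $\delta'=\delta+O(m/C)$.

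First, by two applications of the triangle inequality and the trivial bound $\PT(u,v)\le b|\hat u-\hat v|_1$ (realized by an axis-aligned lattice path between the lattice roundings of $u$ and $v$, each edge weighing at most $b$), one has $\PT(\bz,\bz')\le Kb\ell m$ and $\PT(\bz+v,\bz'+v)\le Kb\ell m$ for an absolute constant $K$ and any vector $v$. This yields
\[
\bigl|\PT(\bz',\bz'+v) - \PT(\bz,\bz+v)\bigr| \le 2Kb\ell m.
\]
Second, for $1\le k'\le k/C$ one has $Ck'\le k$, so applying the $(\delta,\theta,\ell,k)$-stability of $\bz$ with multipliers $Ck'$ and $C$ and dividing the two resulting inequalities gives the coarse-scale comparison
\[
\PT(\bz,\bz+C\ell k'\theta) = (1+O(\delta))\,k'\,\PT(\bz,\bz+C\ell\theta).
\]

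The crux is to convert the additive $O(b\ell m)$ error into an $O(m/C)$ multiplicative error. Setting $P:=\PT(\bz,\bz+C\ell\theta)$, on the event $\sE$ of Lemma \ref{lb90} (applicable because $C\ell$ is comparable to $n$ in our applications, well above the threshold $\sqrt n$), we have $P\ge \alpha\cdot C\ell$ with $\alpha=\tfrac12\mu_{\min}$. Hence
\[
2Kb\ell m \;=\; \frac{2Kb}{\alpha}\cdot \frac{m}{C}\cdot (\alpha C\ell) \;\le\; \frac{2Kb}{\alpha}\cdot \frac{m}{C}\cdot P \;=\; O\!\left(\frac{m}{C}\right) P.
\]
Combining this with the previous two displays yields
\[
\PT(\bz',\bz'+C\ell\theta) = P\bigl(1+O(m/C)\bigr), \qquad \PT(\bz',\bz'+C\ell k'\theta) = k'P\bigl(1+O(\delta+m/C)\bigr),
\]
and taking the ratio gives the desired stability inequality for every $1\le k'\le k/C$, completing the argument.

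The only real obstacle is the additive-to-multiplicative conversion in the third paragraph: the triangle inequality produces an error of order $b\ell m$ which, a priori, need not be small compared with $P$ unless we know $P\gtrsim C\ell$. This is exactly what the bi-Lipschitz comparison encoded by $\sE$ provides, and it is the reason the implicit constant in $\delta'=\delta+O(m/C)$ depends only on $b$ and $\mu_{\min}$. Since $\sE$ holds with overwhelming conditional probability and $C\ell$ is always taken to be at the macroscopic scale in later applications, this introduces no additional loss.
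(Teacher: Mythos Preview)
Your proof is correct and follows essentially the same approach as the paper: both use the triangle inequality to get an additive $O(b\ell m)$ error between $\PT(\bz,\bz+(\theta,\ell'))$ and $\PT(\bz',\bz'+(\theta,\ell'))$, and then convert this to a multiplicative $(1+O(m/C))$ error for $\ell'\ge C\ell$. You are simply more explicit than the paper about the additive-to-multiplicative conversion step, correctly identifying that it relies on the bi-Lipschitz lower bound encoded by the event $\sE$ (a dependence the paper leaves implicit here but invokes in later applications such as Lemma~\ref{ssttile}).
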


\begin{proof} The proof follows by another application of  triangle inequality where we observe the following, analogous to \eqref{tria1}: For any $\ell',$
\begin{align}\label{tria2}
\PT(\bz,\bz+(\theta,\ell'))&\le \PT(\bz',\bz'+(\theta,\ell'))+b \ell m,\\
\nonumber
\PT(\bz',\bz'+(\theta,\ell'))&\le \PT(\bz,\bz+(\theta,\ell'))+b \ell m.
\end{align}
Hence for any $\ell'\ge C\ell$  it follows that,
$\PT(\bz,\bz+(\theta,\ell'))=(1+O(\frac{m}{C}))\PT(\bz',\bz'+(\theta,\ell')),$ (see Figure \ref{fig4} for an illustration).
\end{proof}

An immediate but important corollary is the following smoothness of the gradient field which we state without proof.
\begin{cor}
\label{smoothgrad} Given $\delta$ and $\delta'$ as in Lemma \ref{stab23}, for all large $k$ and all large $n$ and any $\bz, \bz'$ satisfying the hypothesis of that lemma, and for all $\theta \in \bS^{1},$
$$
\frac{1}{1+\delta'}\le \frac{\grad(\bz,\theta,\ell)}{\grad(\bz',\theta, C\ell)}< 1+\delta'. 
$$
\end{cor}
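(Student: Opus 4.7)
The plan is to combine two multiplicative comparisons, both essentially already present in Lemma \ref{stab23} and its proof: one comparing $\grad(\bz,\theta,\ell)$ with $\grad(\bz,\theta,C\ell)$ via stability at $\bz$, and one comparing $\grad(\bz,\theta,C\ell)$ with $\grad(\bz',\theta,C\ell)$ via the triangle inequality between nearby points.

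First, since $\bz$ is $(\delta,\theta,\ell,k)$-$\SST$ and $C \le k$, the defining inequality \eqref{stabnot32} applied with $k' = C$ gives
$$\frac{C}{1+\delta}\,\PT(\bz,\theta,\ell,1) \le \PT(\bz,\theta,C\ell,1) \le (1+\delta)\,C\,\PT(\bz,\theta,\ell,1).$$
Dividing through by $C\ell$ translates this into $\grad(\bz,\theta,C\ell) = (1+O(\delta))\,\grad(\bz,\theta,\ell)$.

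Second, the triangle inequality bound \eqref{tria2} from the proof of Lemma \ref{stab23}, applied with $\ell' = C\ell$, yields
$$\bigl|\PT(\bz,\bz+(\theta,C\ell)) - \PT(\bz',\bz'+(\theta,C\ell))\bigr| \le b\ell m,$$
which, after dividing by $C\ell$, is an \emph{additive} discrepancy of at most $bm/C$ between $\grad(\bz,\theta,C\ell)$ and $\grad(\bz',\theta,C\ell)$. To convert this into a multiplicative statement, I would restrict to the event $\sE$ of Lemma \ref{lb90}: for $n$ large enough so that $C\ell \ge \sqrt{n}$, the gradient is bounded below by $\alpha$, whence the additive error is at most $bm/(C\alpha)$ times $\grad(\bz,\theta,C\ell)$. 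This produces $\grad(\bz',\theta,C\ell) = (1+O(m/C))\,\grad(\bz,\theta,C\ell)$.

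Composing the two displays gives
$$\frac{\grad(\bz,\theta,\ell)}{\grad(\bz',\theta,C\ell)} = 1 + O\bigl(\delta + m/C\bigr),$$
and since $\delta' = \delta + O(m/C)$ by the choice made in Lemma \ref{stab23}, absorbing the universal constant into $\delta'$ gives exactly the claimed sandwich $(1+\delta')^{-1} \le \cdot < 1+\delta'$. The only subtle step is the additive-to-multiplicative upgrade in step two, which genuinely uses the a priori lower bound $\grad \ge \alpha$ available on $\sE$ and is what forces the ``large $n$'' hypothesis; beyond this routine bookkeeping, no substantial obstacle is present.
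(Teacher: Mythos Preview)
Your proof is correct and is exactly the argument the paper has in mind: the corollary is stated there without proof as an ``immediate'' consequence of Lemma \ref{stab23}, and your two-step decomposition (stability at $\bz$ to pass from scale $\ell$ to $C\ell$, then the triangle-inequality comparison \eqref{tria2} to pass from $\bz$ to $\bz'$) simply spells out what the proof of Lemma \ref{stab23} already contains. Your observation that the additive-to-multiplicative conversion implicitly relies on the lower bound from $\sE$ is also correct and is the reason for the ``large $n$'' hypothesis.
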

\begin{figure}[h]
\centering
\includegraphics[scale=.4]{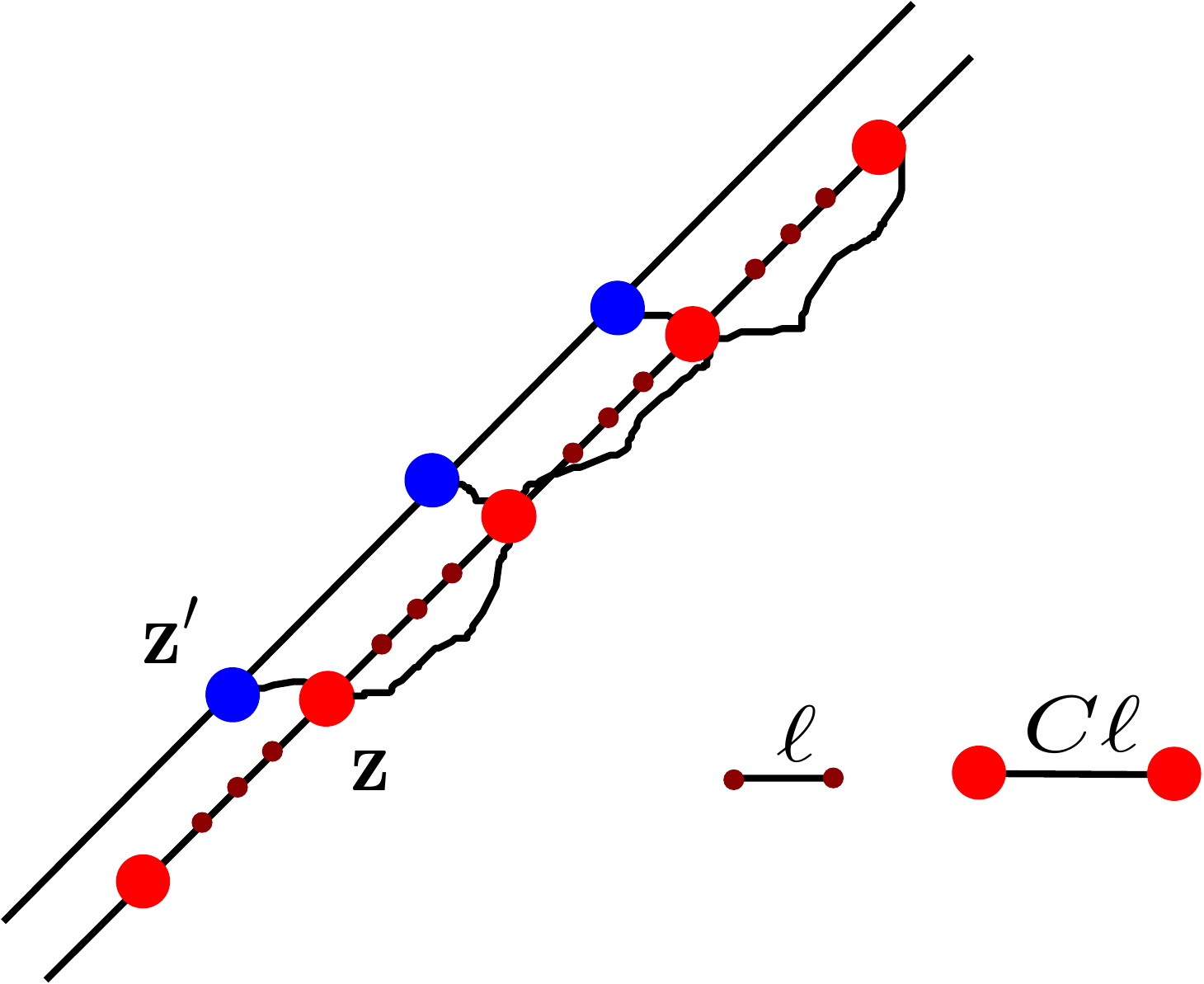}
\caption{Stability for the discrete segment formed by the red points implies the stability for the nearby segment formed by the blue points.}
\label{fig4}
\end{figure}
\subsection{Stability of Tiles} In this subsection we introduce the notion of stability of tiles parallel to the notion of stability for points, which will be convenient for the proofs. The section contains a few lemmas which even though  quite similar to the ones already stated, have various associated quantifiers which could make it a little hard to read and the reader can choose  to skip the straightforward proofs in this section. This will not affect readability of the future sections. 

Given a lattice box $\Lb(n)$ we will often think of it as made up of boxes of a particular scale $j$, i.e. think of the box as being naturally tiled using boxes of size $n/2^j$. Note that one can define a natural bijection between the set of tiles and the set  $\llbracket 1, 2^j\rrbracket^2$. We will use this bijection to denote the tile corresponding to $v\in \llbracket 1, 2^j\rrbracket^2$ by $\Ub_n(j,v)$ (see Figure \ref{fig5}).

\begin{defn}\label{deftile543}For any $v\in \llbracket 1, 2^j \rrbracket^2$, a tile $\Ub_n(j,v)$ is said to be $(\delta, \bS^1(\eta), \ell, k, \e)-\SST$ if at least $1-\e$ fraction of the lattice points in $\Ub_n(j,v)$ are $(\delta, \bS^{1}(\eta),\ell,k)-\SST.$  
\end{defn}
\textbf{In the sequel we will choose $\ell=\frac{n}{2^{j+m}}$ and $k=2^{2m}$ for some $j,m\ll \log_2(n)$, where the choice of $j$ and $m$ will vary through the paper and will depend on some other parameters relevant for specific applications.}

\begin{figure}[h]
\centering
\includegraphics[scale=.6]{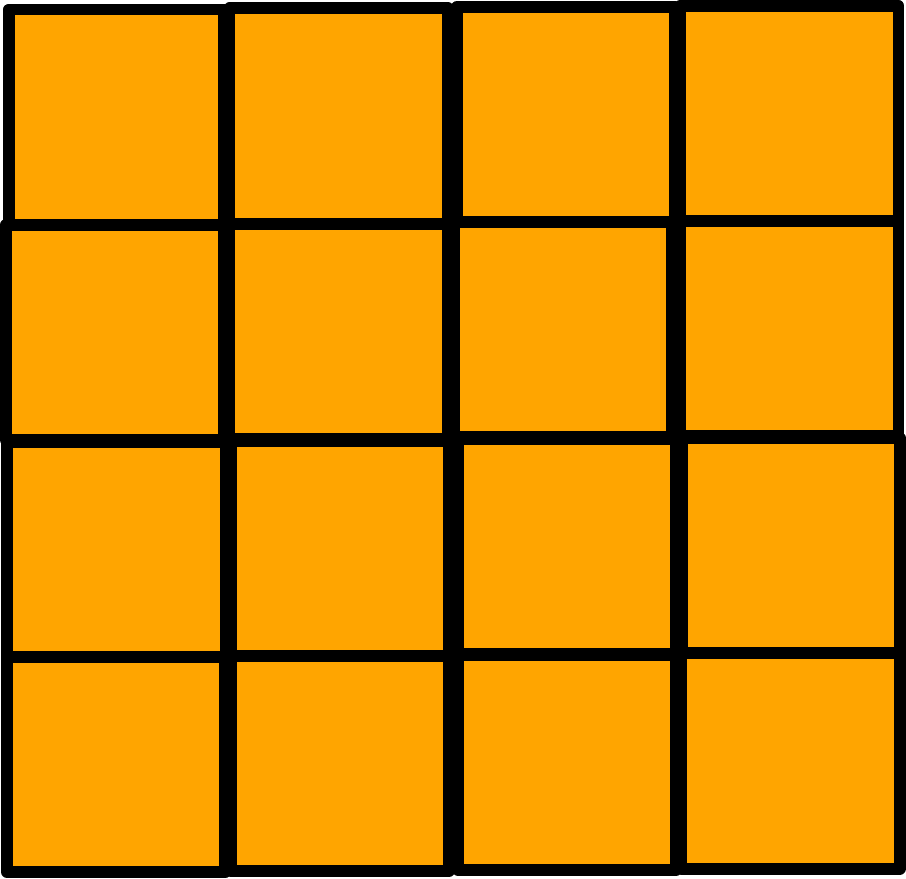}
\caption{The first figure illustrates the tiling an $n\times n$ box in to tiles of size $\frac{n}{4}.$ Thus the set of tiles has a natural bijection with  $\llbracket 1, 4\rrbracket^2$.}
\label{fig5}
\end{figure}

Using Lemma \ref{stab23}, we now prove that if at least $(1-\e)$ fraction of the lattice points in a $\Ub_{n}(j,v)$ are  stable for some values of the parameters, then all the points are  stable for a slightly different range of parameters. 

\begin{lem}
\label{ssttile} 
Let $j,m \in \N$, and $\ell=\frac{n}{2^{j+m}}, k=2^{2m}$.    Fix $\delta>0$. There exists $C>0$ sufficiently large such that for all sufficiently small $\e>0$, on $\sE,$ the following holds for all sufficiently large $n$: if $\Ub_n(j,v)$ is $(\delta,\bS^{1}(\eta),\ell,k, \e)-\SST$, then $\Ub_n(j,v)$ is $(2\delta, \bS^{1}(\eta), \ell',k', 0)-\SST$ where $\ell'=\max(\frac{n}{2^{j}}C\sqrt{\e},\ell)$ and $k'=k\ell/\ell'$.
\end{lem}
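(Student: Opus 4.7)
The plan is to combine a pigeonhole argument with Lemma~\ref{stab23} so that any lattice point $\bz' \in \Ub_n(j,v)$ inherits stability from a nearby stable point. Given such a $\bz'$, I would first locate a $(\delta,\bS^1(\eta),\ell,k)$-$\SST$ point $\bz$ inside the tile at controlled distance from $\bz'$, and then transfer its stability to $\bz'$ simultaneously over all directions $\theta \in \bS^1(\eta)$ at a (possibly) coarser scale, with only a mild worsening of the stability parameter.

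For the pigeonhole step, the hypothesis gives at most $\e (n/2^j)^2$ unstable lattice points in $\Ub_n(j,v)$. For any $\bz' \in \Ub_n(j,v) \cap \Z^2$, even when $\bz'$ sits near a corner, the intersection $B(\bz',R) \cap \Ub_n(j,v)$ still contains $\Omega(R^2)$ lattice points (a full quadrant of side $\Theta(R)$ fits inside the tile). Choosing $R = 3\sqrt{\e}\,n/2^j$ makes this count strictly exceed the number of unstable points, so one can always pick a $(\delta,\bS^1(\eta),\ell,k)$-$\SST$ point $\bz \in \Ub_n(j,v)$ with $|\bz - \bz'| \le R$.

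Next I would invoke Lemma~\ref{stab23} for each $\theta \in \bS^1(\eta)$ with the same pair $(\bz,\bz')$; in that lemma's notation the distance is at most $\ell\,m_{\mathrm L}$ with $m_{\mathrm L} := R/\ell$, and the free parameter $C_{\mathrm L}$ produces worsened stability $\delta + O(m_{\mathrm L}/C_{\mathrm L})$ at scale $C_{\mathrm L}\ell$ and range $k/C_{\mathrm L}$. I would split into two regimes. If $\ell \ge C\sqrt{\e}\,n/2^j$ for an appropriately chosen $C = C(\delta)$, then $m_{\mathrm L}$ is already so small that the choice $C_{\mathrm L} = 1$ yields $(2\delta)$-stability at the same scale, i.e., $\ell'=\ell$ and $k'=k$. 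Otherwise, I set $C_{\mathrm L} = (C''/\delta)\,m_{\mathrm L}$ for a large enough absolute constant $C''$, ensuring the additive error is at most $\delta$; then $\ell' = C_{\mathrm L}\ell = (3C''/\delta)\sqrt{\e}\,n/2^j = C\sqrt{\e}\,n/2^j$ and $k' = k/C_{\mathrm L} = k\ell/\ell'$. Combining both cases gives $\ell' = \max(C\sqrt{\e}\,n/2^j,\ell)$ and $k' = k\ell/\ell'$ as asserted, and since the transfer works uniformly in $\theta$, the tile is $(2\delta,\bS^1(\eta),\ell',k',0)$-$\SST$.

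The remaining check is on the side conditions $k > C_{\mathrm L} > m_{\mathrm L}$ required by Lemma~\ref{stab23}. Using $\ell = n/2^{j+m}$ and $k = 2^{2m}$, in the nontrivial regime $m_{\mathrm L} = O(\sqrt{\e}\cdot 2^m)$ and $C_{\mathrm L} = O(\sqrt{\e}\cdot 2^m/\delta)$, so $k > C_{\mathrm L}$ holds once $\sqrt{\e} \ll \delta \cdot 2^m$ (fine for $\e$ sufficiently small in terms of $\delta$ and $m$) and $C_{\mathrm L} > m_{\mathrm L}$ as soon as $C'' > \delta$. The event $\sE$ enters only through the invocation of Lemma~\ref{stab23}, whose proof uses the lower bound $\PT \gtrsim |\cdot|$ to convert the additive triangle-inequality error $b\ell m_{\mathrm L}$ into the multiplicative error $O(m_{\mathrm L}/C_{\mathrm L})$; this is valid because $\ell' \ge \ell = n/2^{j+m} \gg \sqrt n$ for $j,m \ll \log_2 n$. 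I do not anticipate any serious obstacle; the only real care needed is in tracking the parameters across the two regimes to recover the precise forms of $\ell'$ and $k'$ in the statement.
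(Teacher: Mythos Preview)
Your proposal is correct and follows essentially the same approach as the paper: a pigeonhole argument to find a nearby stable point (the paper uses $8\sqrt{\e}\,n/2^j$ where you use $3\sqrt{\e}\,n/2^j$), followed by an application of Lemma~\ref{stab23} to transfer stability. Your write-up is in fact more careful than the paper's, which simply says ``the proof now follows from Lemma~\ref{stab23} for $C$ sufficiently large (and $\e$ sufficiently small)'' without the case split or the explicit verification of the side conditions $k > C_{\mathrm L} > m_{\mathrm L}$.
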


\begin{proof} Observe that for every $\Ub_n(j,v)$ that is $(\delta,\bS^{1}(\eta), \ell,k, \e)-\SST$ and any $\bz \in \Ub_n(j,v)$ there exists $\bw \in \Ub_n(j,v)$ with $|\bz-\bw| \le 8\sqrt \e\frac{n}{2^j}$ and $\bw$ is $(\delta,\bS^1(\eta), \ell,k)-\SST.$ This is because the existence of  a $\bz$ for which there is no such $\bw$ contradicts the hypothesis that  $\Ub_n(j,v)$ is $(\delta, \bS^{1}(\eta), \ell,k, \e)-\SST$. The proof now follows from Lemma \ref{stab23} by for $C$ sufficiently large (and $\e$ sufficiently small). 
\end{proof}

From now on we will call a $(\delta,\bS^{1}(\eta),\ell',k', 0)-\SST$ tile as a $(\delta,\bS^{1}(\eta),\ell',k')-\SST$ tile. 
We now show that the above in fact implies  stability for all angles $\theta \in \bS^1.$
\begin{lem}
\label{l:gradtile} Let $j, m$  be as in the previous lemma. Then on $\sE,$ the following holds for all sufficiently large $n$: for a $(\delta,\bS^1(\eta), \ell,k)-\SST$   $\Ub_n(j,v)$  
for $\bz, \bz' \in \Ub_n(j,v)$ we have for all $\theta \in \bS^1,$
\begin{align}
\label{stabgrad12}
\frac{1}{1+\delta'}&\le \frac{\grad(\bz,\theta,k_1\ell)}{\grad(\bz',\theta, k_2 \ell)}< 1+\delta',
\end{align}
with $\delta'= O(\delta+\eta+ \frac{1}{2^m})$ and $1\le k_1 ,k_2 \le k.$ 
\end{lem}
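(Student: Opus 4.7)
The plan is to combine three ingredients: (i) the pointwise stability hypothesis at $\bz$ and $\bz'$ to reduce everything to a common length scale, (ii) a triangle-inequality comparison at that common scale, using that $\bz$ and $\bz'$ lie in the same tile of diameter $O(n/2^{j})$, and (iii) a discretization argument to pass from $\bS^1(\eta)$ to all $\theta \in \bS^1$. Throughout, recall $\ell = n/2^{j+m}$, $k = 2^{2m}$, so $k\ell = 2^{m}\cdot n/2^{j}$ while $|\bz-\bz'| \le \sqrt{2}\, n/2^{j}$, which places the common scale a factor of $2^{m}$ above the diameter of the tile; this is the source of the $O(2^{-m})$ error.

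First, since every lattice point of $\Ub_n(j,v)$ is $(\delta,\bS^{1}(\eta),\ell,k)$-$\SST$ by the convention introduced after Lemma \ref{ssttile}, the definition \eqref{stabnot32} applied at $\bz$ (respectively $\bz'$) yields, for any $\theta\in\bS^1(\eta)$ and any $1 \le k_1, k_2 \le k$,
\[
\frac{\grad(\bz,\theta,\ell)}{1+\delta}\le \grad(\bz,\theta,k_1\ell)\le (1+\delta)\,\grad(\bz,\theta,\ell),
\]
and analogously for $\bz'$. Consequently, both $\grad(\bz,\theta,k_1\ell)$ and $\grad(\bz,\theta,k\ell)$ (and similarly for $\bz'$) are within a factor $(1+\delta)^2 = 1+O(\delta)$ of each other, which reduces the problem to comparing $\grad(\bz,\theta,k\ell)$ with $\grad(\bz',\theta,k\ell)$.

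For step (ii), by the triangle inequality for $\PT$ (using the deterministic bound of $b$ per edge to connect $\bz$ to $\bz'$ and $\bz+(\theta,k\ell)$ to $\bz'+(\theta,k\ell)$ via lattice paths of length $O(|\bz-\bz'|)$),
\[
\bigl|\PT(\bz,\bz+(\theta,k\ell)) - \PT(\bz',\bz'+(\theta,k\ell))\bigr| \le 2b \cdot O(|\bz-\bz'|) = O\!\left(\tfrac{b\,n}{2^{j}}\right).
\]
Dividing by $k\ell = 2^{m}\cdot n/2^{j}$ gives an additive gradient difference of $O(b/2^{m})$. The key point is now to convert this to a multiplicative bound: on the event $\sE$ from Lemma \ref{lb90}, since $k\ell = 2^{m}\cdot n/2^{j} \ge \sqrt{n}$ for $j, m \ll \log_{2} n$, we have $\PT(\bz',\bz'+(\theta,k\ell)) \ge \alpha\, k\ell$, hence $\grad(\bz',\theta,k\ell) \ge \alpha$. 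This upgrades the additive $O(1/2^{m})$ bound to a multiplicative error $1 + O(1/2^{m})$.

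For step (iii), to extend from $\theta \in \bS^{1}(\eta)$ to arbitrary $\theta \in \bS^{1}$, let $\hat\theta \in \bS^{1}(\eta)$ be the nearest discretized angle. Exactly as in the proof of Lemma \ref{eascons} (see \eqref{tria1}), the endpoints $\bz+(\theta,k_1\ell)$ and $\bz+(\hat\theta,k_1\ell)$ are at Euclidean distance $O(\eta k_1\ell)$, so by the edge-weight bound and triangle inequality their passage times from $\bz$ differ by $O(b\eta k_1\ell)$. Dividing by $k_1\ell$ and using the lower bound $\grad(\bz,\theta,k_1\ell)\ge \alpha$ on $\sE$ again gives a multiplicative factor of $1+O(\eta)$. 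Composing the errors from (i), (ii), (iii) yields \eqref{stabgrad12} with $\delta' = O(\delta+\eta+2^{-m})$. The only step that requires genuine care rather than bookkeeping is confirming the uniform gradient lower bound from $\sE$ at both the common scale $k\ell$ (for step ii) and the smaller scale $k_1\ell$ (for step iii); both reduce to checking $k_i\ell \ge \sqrt{n}$, which is automatic once $j+m$ is small compared to $\tfrac12 \log_2 n$, and thus holds for all sufficiently large $n$.
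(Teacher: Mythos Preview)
Your proof is correct and follows essentially the same approach as the paper: both arguments combine (a) the stability hypothesis to compare different multiples $k_1\ell$ and $k_2\ell$ at a single point, (b) the triangle inequality together with $|\bz-\bz'|=O(n/2^{j})$ versus $k\ell=2^{m}n/2^{j}$ to compare the two points at the common scale $k\ell$, and (c) the angle-discretization step \eqref{tria1} to pass from $\bS^1(\eta)$ to all of $\bS^1$. The only cosmetic difference is the order in which these are applied, and that you make the invocation of $\sE$ (to convert additive errors into multiplicative ones via the lower bound $\grad\ge\alpha$) more explicit than the paper does.
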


\begin{proof}
The proof is quite similar to that of  Lemma \ref{ssttile}. Recalling  \eqref{tria1} if for any  $\theta \in \bS^1$,  $\hat \theta$ is the closest point in $\bS^1(\eta),$ then for any $k_1\le k,$
\begin{align*}
|\PT(\bz,\bz+(\theta, k_1\ell))- \PT(\bz,\bz+(\hat \theta, k_1\ell))|\le b\eta k_1 \ell,\\
\end{align*}
which along with the hypothesis that $\bz$ is $(\delta,\bS^1(\eta), \ell,k)-\SST$ implies that 
\begin{align*}
\frac{1}{1+O(\delta+\eta)}&\le \frac{\grad(\bz,\theta,k_1\ell)}{\grad(\bz,\theta, k_2 \ell)}< 1+O(\delta+\eta)).
\end{align*}
Now another application of triangle inequality as in \eqref{tria2}, shows that for any $\bz,\bz' $ as in the statement of the lemma,
\begin{align*}
\PT(\bz,\bz+(\theta, k \ell))&\le \PT(\bz',\bz'+(\theta,k \ell))+O(b\frac{n}{2^j}).
\end{align*}
Hence using the fact that $k\ell=\frac{2^m n}{2^j},$ it follows that,
$
\frac{1}{1+O(\delta+\eta+\frac{1}{2^m})}\le \frac{\grad(\bz,\theta,k\ell)}{\grad(\bz',\theta, k \ell)}< 1+O(\delta+\eta+\frac{1}{2^m}). 
$

\end{proof}

Thus from now on, we shall refer to a tile as $(\delta,\ell,k)-\SST$ if \eqref{stabgrad12}  is satisfied with $\delta$ in place of $\delta'$.
Now for a $(\delta,\ell,k)-\SST$ $\Ub_n(j,v)$ as above, \eqref{stabgrad12} allows us to define a gradient function not for every individual point $\bz$ but for the whole tile itself. 
\begin{defn}\label{tilegrad}
For a  $(\delta,\ell,k)-\SST$ $\Ub_n(j,v)$ define for any $\theta \in \bS^1,$
 $$ \grad_n((j,v),\theta)= \grad_n((j,v),\theta, \ell):=\grad(\bz,\theta, \ell)$$ for the center point $\bz$ of $\Ub_n(j,v).$  
\end{defn}

Observe that even though this definition implicitly depends on $\ell$, we shall drop it from our notation as the length scale $\ell$ will always be clear from the context. The reason for calling the the gradient function for $\Ub_n(j,v)$ is the following: even if we replace the centre of $\Ub_n(j,v)$ by any arbitrary $\bz\in \Ub_n(j,v)$, the value of the gradient changes only by a multiplicative factor of $(1+\delta)$; in all our applications, by proper choice of parameters $\delta$ will be made arbitrarily close to zero.  

With the above preparation we shall now go back to the setting of Proposition \ref{t:stable} and show that there exists a scale $j$ such that, conditional on $\sU^*_{\zeta}(n)$, with probability bounded below most of the scale $j$ tiles in $\Bb(\sC n)$ are  stable.

\begin{lem}\label{goodscale1} Conditional on $\sU^*_{\zeta}(n),$ (recall that this was an event on $\Lb(4\sC n)$). Then given $\eta, m,\delta,\e_1, J_1$ such that $\frac{1}{2^m}\ge \sqrt{\e_1},$ there exists a constant  $J_2$  such that for all large enough $n,$ there exists a  scale $J_1\le j\le J_2 $ (depending on $n$) such that with probability at least $\frac{1}{J_2}$, for all but $\e_1$ fraction of $v\in \llbracket 1, 2^{j}\rrbracket^2,$  $\Ub_{\sC n}(j,v)$ is $(\delta,\bS^1(\eta),\ell,k,\e_1)-\SST$ (see Definition \ref{deftile543}) where $\ell=\frac{\sC n}{2^{j+m}}$ and $k=2^{2m}.$
\end{lem}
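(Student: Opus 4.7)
The plan is to deduce this lemma from Proposition~\ref{t:stable} by pigeonhole on the random scale produced there, followed by a simple counting argument that converts ``few unstable points'' into ``few bad tiles''.

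First I would invoke Proposition~\ref{t:stable} with the given $\delta,\eta$, with its $k$-parameter set to $2^{2m}$, with its $\e$-parameter set to a small constant multiple of $\e_1^2$ (concretely, $\e_1^2/(100\sC^2)$ will more than suffice), and with its starting scale $J_1$ replaced by $J_1+m$. This produces a constant $J_2^{(0)}$ such that, conditionally on $\sU^*_{\zeta}(n)$ and for all large $n$, there is a random scale $j'(\Pi)\in\llbracket J_1+m,\,J_2^{(0)}\rrbracket$ at which at most $\e_1^2 n^2/(100\sC^2)$ lattice points of $\Lb(\sC n)$ fail to be $(\delta,\bS^1(\eta),\sC n/2^{j'},2^{2m})-\SST$. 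Set $J_2:=J_2^{(0)}$.

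Next, pigeonhole yields a deterministic value $j^*\in\llbracket J_1+m,\,J_2\rrbracket$ (depending on $n$) with
$$\P\bigl(j'(\Pi)=j^*\,\big|\,\sU^*_{\zeta}(n)\bigr)\;\geq\;\frac{1}{J_2-J_1-m+1}\;\geq\;\frac{1}{J_2}.$$
Define $j:=j^*-m\in\llbracket J_1,J_2\rrbracket$; with $\ell=\sC n/2^{j+m}=\sC n/2^{j^*}$ and $k=2^{2m}$ as in the statement, the stability supplied by Proposition~\ref{t:stable} at scale $j^*$ is precisely $(\delta,\bS^1(\eta),\ell,k)-\SST$. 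Hence on an event of conditional probability at least $1/J_2$, at most $\e_1^2 n^2/(100\sC^2)$ lattice points of $\Lb(\sC n)$ fail this stability.

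On this event I would finally convert the point bound to a tile bound. The box $\Lb(\sC n)$ is partitioned into $2^{2j}$ tiles, each of side length $\asymp\sC n/2^j$ and containing $\asymp\sC^2 n^2/2^{2j}$ lattice points. By Definition~\ref{deftile543}, a tile fails to be $(\delta,\bS^1(\eta),\ell,k,\e_1)-\SST$ precisely when more than an $\e_1$-fraction of its lattice points are unstable, so each bad tile contains at least a constant multiple of $\e_1\sC^2 n^2/2^{2j}$ unstable points. Writing $B$ for the number of bad tiles, one obtains $c\,B\,\e_1\sC^2 n^2/2^{2j}\leq\e_1^2 n^2/(100\sC^2)$, which rearranges to $B\leq\e_1\cdot 2^{2j}$ after absorbing the absolute constant (here it is essential that $\sC=4b/\mu_{\min}$ is a fixed positive constant, independent of the free parameters). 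This is the desired bound on the fraction of bad tiles, and I anticipate no serious obstacle: the substantive content is already in Proposition~\ref{t:stable}, and what remains is careful bookkeeping of the shift $j\mapsto j+m$, the pigeonhole inflation $1\mapsto 1/J_2$, and the absolute constant in the point-to-tile conversion. The hypothesis $2^{-m}\geq\sqrt{\e_1}$ does not enter the argument above; it is presumably imposed for compatibility with the downstream use of Lemma~\ref{ssttile} that promotes a $(\ldots,\e_1)-\SST$ tile to one with $\e_1=0$ at the same scale.
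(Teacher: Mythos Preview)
Your proposal is correct and follows essentially the same route as the paper's own proof: apply Proposition~\ref{t:stable} with $\e$ of order $\e_1^2$ and $k=2^{2m}$, pigeonhole on the random scale to extract the $1/J_2$ probability, then convert the point-level bound to a tile-level bound by the obvious counting argument. Your handling of the scale shift $j\mapsto j+m$ and the constant $\sC$ is in fact more explicit than the paper's, which glosses over both; your remark that the hypothesis $2^{-m}\ge\sqrt{\e_1}$ is not used here is also accurate.
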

\begin{proof} Note that from the statement of Proposition \ref{t:stable} choosing $k=2^{4m}$ and $\e=\e_1^2$ it follows that there exists  a scale $j$ such that with probability at least $\frac{1}{J_2}$ ($J_2$ appearing in the statement of Proposition \ref{t:stable}) the fraction of points $\bz$ in $\Lb(\sC n)$ which are not $(\delta,\bS^{1}(\eta),\ell,k)-\SST$ is at most $\e n^2$ where $\ell=\frac{\sC n}{2^{j+m}}$ and $k=2^{2m}.$ 
Thus the total fraction of $v\in \llbracket 1, 2^{j}\rrbracket^2$ such that  $\Ub_{\sC n}(j,v)$ is not $(\delta, \bS^1(\eta), \ell,k,\e_1)-\SST$ is at most $\e_1$ since other wise the total fraction of points $\bz\in \Lb(\sC n)$ that are not $(\delta, \bS^1(\eta),\ell,k)-\SST$ will be more than $\e=\e_1^2$ contradicting the conclusion of Proposition \ref{t:stable}.

\end{proof}

The above result along with Lemma \ref{ssttile} now implies that most of the tiles are  stable with the parameter $\e$ being set to $0$, and other parameters slightly worsened. 

\begin{lem}\label{most34} Given small enough $\delta_1,\e_1>0$ and a positive integer $m_1,$ such that $\frac{1}{2^{m_1}}\ge \e_1^{1/4},$ and $J_1 \in \N$ there exists  $J_2$ such that for all large enough $n,$ conditioned on  $\sU^*_{\zeta}(n)$  there exists $J_1\le j_1< J_2$ (depending on $n$) such that with probability at least  $\frac{1}{J_2}$ the fraction of $v\in \llbracket 1, 2^{j_1}\rrbracket^2$ such that $\Ub_{{\sC n}}(j_1,v)$ is not  $(\delta_1,\ell_1, k_1)-\SST$ is at most $\e_1$ where $\ell_1=\frac{n}{2^{j_1+m_1}}$ and $k_1=2^{2m_1}$.
\end{lem}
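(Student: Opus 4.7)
The plan is to chain together Lemmas \ref{goodscale1}, \ref{ssttile}, and \ref{l:gradtile}, matching the parameters so that the output of each step feeds as input to the next. I would first introduce auxiliary parameters $\delta_0, \eta_0, m_0, \e_0$ chosen as follows: take $m_0$ much larger than $m_1 + \log_2 \sC$ so that the error $2^{-m_0}$ appearing in Lemma \ref{l:gradtile} is much smaller than $\delta_1$; take $\delta_0, \eta_0$ small enough that the combined error $O(\delta_0 + \eta_0 + 2^{-m_0})$ produced by Lemma \ref{l:gradtile} is at most $\delta_1$; and take $\e_0 \le \e_1$ small enough to satisfy both $2^{-m_0} \ge \sqrt{\e_0}$ (needed to invoke Lemma \ref{goodscale1}) and $C\sqrt{\e_0} \le 2^{-m_0}$ (ensuring that the ``max'' in Lemma \ref{ssttile} is attained by the deterministic second term).

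Applying Lemma \ref{goodscale1} with parameters $(\delta_0, \eta_0, m_0, \e_0, J_1)$ then produces a constant $J_2$ and, for all sufficiently large $n$, a (random) scale $j_1 \in [J_1, J_2]$ such that with conditional probability at least $1/J_2$, at most an $\e_0 \le \e_1$ fraction of tiles $\Ub_{\sC n}(j_1, v)$ fail to be $(\delta_0, \bS^1(\eta_0), \ell_0, k_0, \e_0)-\SST$, where $\ell_0 = \frac{\sC n}{2^{j_1+m_0}}$ and $k_0 = 2^{2m_0}$. For each such ``good'' tile, Lemma \ref{ssttile} upgrades the conclusion to $(2\delta_0, \bS^1(\eta_0), \ell_0, k_0)-\SST$ with every lattice point stable, since the inequality $C\sqrt{\e_0} \le 2^{-m_0}$ forces $\ell' = \ell_0$ and $k' = k_0$ in that lemma. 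Lemma \ref{l:gradtile} then converts this into the tile-gradient stability: for any $\bz, \bz'$ in the tile, any $\theta \in \bS^1$, and any $1 \le k', k'' \le k_0$, the ratio $\grad(\bz, \theta, k'\ell_0)/\grad(\bz', \theta, k''\ell_0)$ lies within $(1-\delta_1, 1+\delta_1)$. Since $m_0 \gg m_1 + \log_2 \sC$, the target interval $[\ell_1, k_1 \ell_1]$ is comfortably contained in $[\ell_0, k_0 \ell_0]$, and a short triangle-inequality step (in the spirit of Lemma \ref{stab23}) handles the fact that $\ell_1$ need not be an exact integer multiple of $\ell_0$. This yields that each good tile is $(\delta_1, \ell_1, k_1)-\SST$ in the gradient sense defined after Lemma \ref{l:gradtile}, and since at most an $\e_0 \le \e_1$ fraction of tiles are bad, the lemma follows.

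The main obstacle is the parameter bookkeeping; this is tractable precisely because Lemma \ref{goodscale1} allows the scale index $m_0$ to be chosen as large as we wish, independent of the target $m_1$. Consequently the error $2^{-m_0}$ in Lemma \ref{l:gradtile} (which would otherwise bottleneck the argument when $m_1$ is small) can always be made negligible relative to $\delta_1$.
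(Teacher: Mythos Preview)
Your proposal is correct and follows essentially the same route as the paper: apply Lemma \ref{goodscale1} with auxiliary parameters, upgrade via Lemma \ref{ssttile}, and then invoke Lemma \ref{l:gradtile} to obtain the tile-gradient form of stability. The paper's own proof is terser (it simply asserts that ``our initial choice of parameters can be made such that $\delta'',\ell',k'$ matches the parameters in Lemma \ref{most34}''), whereas you spell out the bookkeeping explicitly by taking $m_0\gg m_1$ so that $[\ell_1,k_1\ell_1]\subset[\ell_0,k_0\ell_0]$ and the interpolation error is negligible.
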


\begin{proof} The proof will follow by first using Lemma \ref{goodscale1} with some choice of parameters $\eta, \delta, m, \e_1, J_1$ which implies the existence of $j_1$ such that with  probability at least $\frac{1}{J_2}$, for all but $\e_1$ fraction of $v\in \llbracket 1, 2^{j_1}\rrbracket^2,$  $\Ub_{\sC n}(j_1,v)$ are $(\delta,\bS^1(\eta),\ell,k,\e_1)-\SST$ (see Definition \ref{deftile543}) where $\ell=\frac{\sC n}{2^{j_1+m}}$ and $k=2^{2m}$ for some values of $\delta$ and $m$.
We will now apply  Lemma \ref{ssttile}  to conclude from the above that all but $\e_1$ fraction of $v\in \llbracket 1, 2^{j_1}\rrbracket^2,$  $\Ub_{\sC n}(j_1,v)$ are 
$(2\delta, \bS^{1}(\eta), \ell',k', 0)-\SST$ where $\ell'=\max(\frac{n}{2^{j_1}}C\sqrt{\e},\ell)$ and $k'=k\ell/\ell'$ for some $C.$
Now applying Lemma \ref{l:gradtile} we conclude that each tile of the latter kind is in fact
$(\delta'',\ell',k')-\SST$ for some $\delta''>0.$  It can now be  verified that our initial choice of parameters can be made such  that $\delta'',\ell',k'$ matches the parameters in Lemma \ref{most34}.
\end{proof}
Throughout the article Lemma \ref{most34} will govern our choices of parameters. 
 
\section{Technical preliminaries}\label{prelim}

As mentioned in our proof strategy, we shall take a configuration from the large deviation regime at some length scale $n$, and replicate/dilate the same configuration to obtain a configuration at a higher length scale. The obvious problem one notices is that for continuous passage time distributions, each configuration has probability $0$. Hence to carry out our proof strategy, we will not be able to work with the edge weight configurations directly. We will project it to a discrete set of $e^{o(n^2)}$ many elements and pick the most likely one among them (still in the large deviation regime). We shall employ the following discretization. 
 
Note that by the upper bound on the support of the edge variables, deterministically, $\PT(\bx,\by)\le 3b|\bo{x}-\bo{y}|+2b$ for any $\bo{x},\bo{y} \in \R^2$. Now for a discretization parameter $\eta_1,$   we will discretize the normalized distances (passage time divided by Euclidean distance) to be in the set 
$\{0, \eta_1,2\eta_1,\ldots, 3b\}$
(again assuming that $\eta_1$ is chosen to avoid rounding issues) and project the distance functions $\PT(\cdot,\cdot)$ onto a discrete space accordingly.

To define things formally, first let the set of all points in $\Bb(n)\cap \frac{n}{2^j}\Z^2$ be called  $\Gr_n(j).$ We will also need the following variant. Let $\ell=\frac{n}{2^{j+m}}$ for some some $m\in \N$. By $\Gr_n(\ell;j),$ we shall denote the set of all  points in $\Gr_n(j+m)$ which intersect the line segments joining the nearest neighbors in $\Gr_n(j)$ thought of as elements of $\frac{n}{2^j}\Z^2$ (see Figure \ref{fig6}). 
\begin{figure}[h]
\centering
\includegraphics[scale=.6]{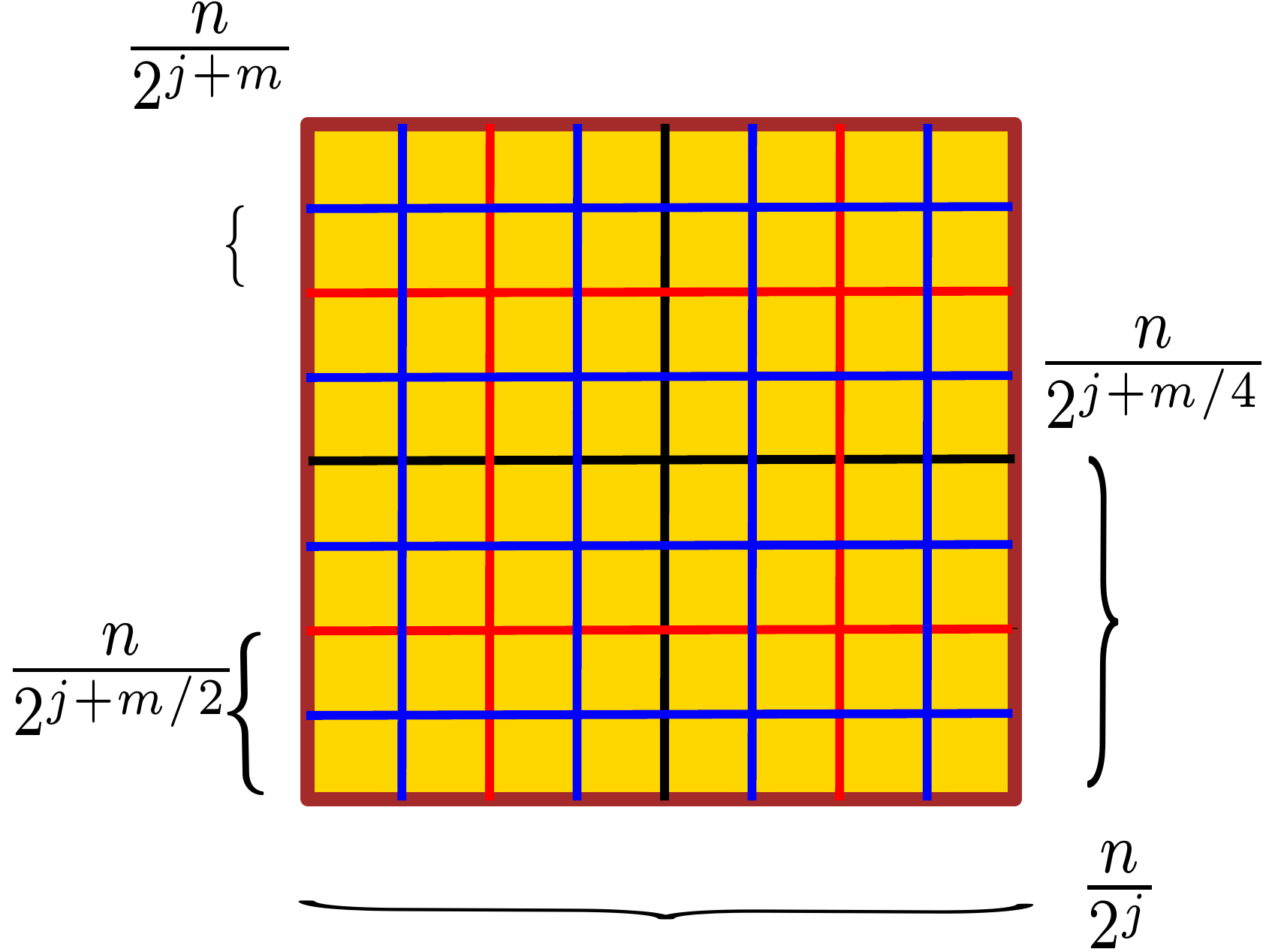}
\caption{Figure illustrating the various grid points. Intersection of the brown lines denote $\Gr_n(j)$, intersection of the black lines with the black lines as well as the brown lines denote the points in $\Gr_n(j+\frac{m}{4})$ which are not in $\Gr_n(j)$, and similarly points on red lines and blue lines denote points in $\Gr_n(j+\frac{m}{2})$, and $\Gr_n(j+m)$ respectively which are not in the previous coarser grid. 
}
\label{fig6}
\end{figure}

Now given $\eta_1,\ell_1,j_1$ with $\ell_1=\frac{\sC n}{2^{j_1+m_1}}$ let the projection map
 $\overset{\eta_1,\ell_1,j_1}{\Pro}: \Gr_{\sC n}(j_1+m_1)\times \Gr_{\sC n}(j_1+m_1) \to \R_{+}$  be defined as follows: for any $\bo{z},\bo{w} \in\Gr_{\sC n}(j_1+m_1),$
\begin{equation}\label{projectedfunction}
\overset{\eta_1,\ell_1,j_1}{\Pro}(\bo{z},\bo{w})=\eta_1\left\lfloor\frac{\PT(\bo{z},\bo{w})}{\eta_1 |\bo{z}-\bo{w}|}\right\rfloor|\bz-\bo{w}|.
\end{equation}

Observe that the function $\overset{\eta_1,\ell_1,j_1}{\Pro}$\footnote{Although the domain of $\Pro$ is determined completely by $\ell_1$ in practice we shall mostly apply this function on pairs of points in $\Gr_n(\ell_1;j_1)$, hence we chose to keep both parameters $\ell_1$ and $j_1$ while specifying $\Pro$.} is random but we choose to suppress the dependence on the underlying noise for brevity. We will also drop the dependence on $\eta_1,\ell_1,j_1$ in the notation whenever there is no scope of confusion. Observe that a very basic counting argument yields that the cardinality of the image set of  $\overset{\eta_1,\ell_1,j_1}{\Pro}$ denoted $\sP\sV_{\eta_1,\ell_1,j_1}$ satisfies

\begin{equation}\label{imageset}
|\sP\sV_{\eta_1,\ell_1,j_1}|\leq e^{O(2^{2(j_1+m_1)})\log\frac{1}{\eta_1}}=e^{o(n^2)}
\end{equation}
where $m_1$, as above, is defined by $\ell_1=\frac{\sC n}{2^{j_1+m_1}}$. Note that ${\Pro}$ induces a weighted graph with vertex set $\Gr_{\sC n}(j_1+m_1),$  and the weight on any edge $(\bo{z},\bo{w})$ being ${\Pro}(\bo{z},\bo{w})$.  It will also be useful to extend the definition of $\Pro$ to a larger set of pairs. For all pairs of points $\bo{z}, \bo{w} \in \Bb(\sC n)$ we will extend the definition, by letting $\Pro(\bo{z}, \bo{w})={\Pro}(\hat{\bo{z}},\hat{\bo{w}}),$ where $\hat{\bo{z}},\hat{\bo{w}}$ are the nearest points to $\bo{z}, \bo{w}$ respectively in $\Gr_{\sC n}(j_1+m_1),$ (as before breaking ties by picking the smallest in the lexicographic order). Note that if $\bz$ and $\bw$ get rounded to the same point, then $\Pro(\bo{z}, \bo{w})$ is zero which is not a realistic definition. However we will only be interested in pairs $\bz$ and $\bw$ that are reasonably far apart so that the above issue will not arise and hence we will not bother about this aspect of the definition.

The first thing we show now is that the error introduced by using $\Pro(\cdot, \cdot)$ instead of $\PT(\cdot, \cdot)$ can be neglected at sufficiently large length scales. For reasons that will become clear momentarily, we shall work with $\SST$ tiles, although the approximation is valid independent of that. Fix $\delta_1,\e_1$ and $m_1$  as in  Lemma \ref{most34}, which then guarantees that there exists $j_1$ with probability bounded away from zero, such that  for all but $\e_1$ fraction of $v \in \llbracket 1, 2^{j_1}\rrbracket^2$, 
$\Ub_{\sC n}(j_1,v)$ is $(\delta_1,\ell_1,k_1)-\SST$ where $\ell_1$ and $k_1$ are $\frac{\sC n}{2^{j_1+m_1}}$ and $2^{2m_1}$ respectively.
For later reference let us call  $v \in \llbracket1, 2^{j_1}\rrbracket^2$  such that  $\Ub_{\sC n}(j_1,v)$ is not $(\delta_1,\ell_1,k_1)-\SST$ as $(\delta_1,\ell_1, k_1)-\US.$

Fixing a value of $\eta_1$ (to be specified later and $\ll \delta_1$) we will now consider the projection map 
$\overset{\eta_1,\ell_2,j_1}{\Pro}$ where $\ell_2=\frac{\sC n}{2^{j_1+\frac{m_1}{2}}}$. 

\begin{lem}
\label{compare1} 
Fix $\delta_1,\eta_1, \ell_1, \ell_2, j_1$  as above and conditioned on $\sU^*_{\zeta}(n),$ consider $v\in  \llbracket 1, 2^{j_1}\rrbracket^2$ such that 
$\Ub_{\sC n}(j_1,v)$ is $(\delta_1,\ell_1,k_1)-\SST.$  Then for any $\bo{z},\bo{w}\in \Gr_{\sC n}(j_1+m_1/2)$ such that $\bo{z},\bo{w} \in \Ub_{\sC n}(j_1,v)$ we have the following:
$$1\leq \frac{\PT(\bo{z}, \bo{w})}{\Pro(\bo{z}, \bo{w})}\leq 1+O(\eta_1).$$
\end{lem}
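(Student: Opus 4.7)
The lower bound $1 \leq \PT(\bz,\bw)/\Pro(\bz,\bw)$ is immediate from the definition: since $\Pro$ is obtained by rounding down, we deterministically have $\Pro(\bz,\bw) \leq \PT(\bz,\bw) < \Pro(\bz,\bw) + \eta_1|\bz-\bw|$. The entire content of the lemma is therefore the upper bound, which by the same two-sided inequality reduces to showing
\[
\frac{\eta_1|\bz-\bw|}{\Pro(\bz,\bw)} = O(\eta_1),
\]
i.e.\ that $\Pro(\bz,\bw)$ is bounded below by a positive constant multiple of $|\bz-\bw|$.

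The plan is to get such a lower bound from the event $\sE$ (Lemma~\ref{lb90}), which is implied by $\sU^*_{\zeta}(n)$. First I would note that since $\bz,\bw \in \Gr_{\sC n}(j_1+m_1/2)$ are distinct points of a grid of spacing $\ell_2 = \frac{\sC n}{2^{j_1+m_1/2}}$, their distance satisfies $|\bz-\bw| \geq \ell_2$. Since $j_1$ and $m_1$ are fixed while $n$ grows, for all $n$ sufficiently large we have $\ell_2 \gg \sqrt{n}$, so the Euclidean lower bound from $\sE$ applies and gives
\[
\PT(\bz,\bw) \geq \alpha|\bz-\bw|, \qquad \alpha = \tfrac12 \mu_{\min} > 0.
\]
(Note that $\bz,\bw \in \Gr_{\sC n}(j_1+m_1/2) \subset \Gr_{\sC n}(j_1+m_1)$, so the rounding implicit in the extended definition of $\Pro$ is trivial, $\hat{\bz}=\bz$, $\hat{\bw}=\bw$.) Combining this with $\PT(\bz,\bw) < \Pro(\bz,\bw) + \eta_1|\bz-\bw|$ gives $\Pro(\bz,\bw) \geq (\alpha - \eta_1)|\bz-\bw|$. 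Since $\eta_1$ is a free small parameter (to be taken much smaller than $\delta_1$, hence in particular smaller than $\alpha/2$, a constant depending only on $\nu$), we obtain $\Pro(\bz,\bw) \geq \tfrac{\alpha}{2}|\bz-\bw|$.

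Plugging this into the ratio $\PT/\Pro$ gives
\[
\frac{\PT(\bz,\bw)}{\Pro(\bz,\bw)} \leq 1 + \frac{\eta_1|\bz-\bw|}{(\alpha/2)|\bz-\bw|} = 1 + \frac{2\eta_1}{\alpha} = 1 + O(\eta_1),
\]
which is the desired estimate. There is no genuine obstacle here — the stability hypothesis on $\Ub_{\sC n}(j_1,v)$ is not actually used (only the $\sE$-part of $\sU^*_{\zeta}(n)$ is needed), and is presumably included only to match the hypothesis format of the subsequent lemmas where it will be essential. The one mild subtlety is ensuring the grid spacing dominates $\sqrt{n}$ so that $\sE$ is applicable; this forces the "for all sufficiently large $n$" clause and explains why $j_1,m_1$ must be thought of as bounded in terms of $n$.
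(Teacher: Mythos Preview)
Your proof is correct and follows essentially the same line as the paper: the paper simply observes $\Pro(\bz,\bw)\le \PT(\bz,\bw)\le \Pro(\bz,\bw)+O(\eta_1)|\bz-\bw|$ and then invokes the lower bound $\PT(\bz,\bw)\ge \alpha|\bz-\bw|$ from $\sU^*_\zeta(n)$ (using $|\bz-\bw|\ge \ell_2$), exactly as you do. Your observation that the stability hypothesis is not actually needed here is also accurate.
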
 

\begin{proof}
Observe that by definition 
$$\Pro(\bo{z}, \bo{w})\leq \PT(\bo{z}, \bo{w}) \leq  \Pro(\bo{z}, \bo{w})+ O(\eta_1) |\bo{z}-\bo{w}|.$$
The proof follows immediately by noticing that since $\bo{z}$ and $\bo{w}$ are at distance at least $\ell_2$ and on $\sU^*_{\zeta}(n)$ by definition $\PT(\bo{z}, \bo{w})\ge \alpha |\bo{z}- \bo{w}|.$   
\end{proof}

We now define a gradient function corresponding to the projected distances analogous to \eqref{grad87}. As in the above setting let $\bo{z}, \bo{w}\in \Gr_{\sC n}(j_1+m_1/2)$  and let $\theta$ and $d>0$ be such that $\bo{w}=\bo{z}+(\theta,d)$.  Then let 
\begin{equation}\label{grad88}
\grad_{\Pro}(\bz,\theta, d)=\frac{\Pro(\bo{z}, \bo{w})}{|\bo{z}-\bo{w}|}.
\end{equation}
Once we have defined the projected gradients only for pairs of points in $\Gr_{\sC n}(j_1+m_1/2)$ we define projected gradients in all directions at a slightly coarser scale, i.e. for all points in $\Gr_{\sC n}(j_1+m_1/4)$. For any $\bo{z}\in \Ub_{\sC n}(j_1,v)\cap \Gr_{\sC n}(j_1+m_1/4)$ and for any $\theta\in \bS^1$ and $\frac{n}{2^{j_1}}>d>\frac{n}{2^{j_1+m_1/4}}$ let 
\begin{equation}\label{grad89}
\grad_{\Pro}(\bz,\theta, d)=\frac{\Pro(\bo{z}, \bo{w})}{d},
\end{equation}
where $\bo{w}$ is the closest point to $\bo{z}+(\theta,d)$ in  $\Gr_{\sC n}(j_1+m_1/2)$. Note that $\Gr_{\sC n}(j_1+m_1/4)\subset \Gr_{\sC n}(j_1+m_1/2)$. Thus \eqref{grad89} is defined via 
\eqref{grad88}.
If $\Ub_{\sC n}(j_1,v)$ is $(\delta_1,\ell_1,k_1)-\SST.$
then as in  \eqref{stabgrad12} along with an application of triangle inequality as in \eqref{tria1}, the following  result about smoothness of the projected gradient field follows whose proof we omit. 
\begin{lem}\label{smoothgradproj} For any $\bz, \bz' \in \Ub_{\sC n}(j_1,v)$  and $\theta_1, \theta_2 \in \bS^1,$  such that $|\theta_1-\theta_2|\le \eta_1$ and $d_1,d_2$  such that $\grad_{\Pro}(\bz,\theta_1, d_1)$ and $\grad_{\Pro}(\bz',\theta_2, d_2)$ are defined via \eqref{grad89} then  
$$
\frac{1}{1+O(\delta_1+\eta_1+2^{-m_1/4})}\le \frac{\grad_{\Pro}(\bz,\theta_1, d_1)}{\grad_{\Pro}(\bz',\theta_2, d_2)}< 1+O(\delta_1+\eta_1+2^{-m_1/4}). 
$$
\end{lem}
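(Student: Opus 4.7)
\medskip

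\noindent\textbf{Proof plan for Lemma \ref{smoothgradproj}.} The plan is to chain four approximations, passing from the projected gradient at $(\bz,\theta_1,d_1)$ to the true gradient at $(\bz,\theta_1,d_1')$, then matching base points $\bz \to \bz'$ via the stability of the tile, then rotating $\theta_1 \to \theta_2$ by a triangle inequality, and finally translating back to the projected gradient at $(\bz',\theta_2,d_2)$. In each step the multiplicative error will be of order $\eta_1$, $\delta_1$, or $2^{-m_1/4}$, and the statement will follow upon collecting them.

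First I would deal with the passage between $\grad_{\Pro}$ and $\grad$. By the definition \eqref{grad89}, $\grad_{\Pro}(\bz,\theta_1,d_1) = \Pro(\bz,\bw_1)/d_1$, where $\bw_1$ is the closest point of $\Gr_{\sC n}(j_1+m_1/2)$ to $\bz + (\theta_1,d_1)$. Since $\Gr_{\sC n}(j_1+m_1/2)$ has spacing $\ell_2 = \sC n/2^{j_1+m_1/2}$ and $d_1 \ge \sC n/2^{j_1+m_1/4}$, the ratio $\ell_2/d_1$ is $O(2^{-m_1/4})$; thus $|\bz-\bw_1| = d_1(1+O(2^{-m_1/4}))$, and the direction from $\bz$ to $\bw_1$ differs from $\theta_1$ by at most $O(2^{-m_1/4})$. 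Applying Lemma \ref{compare1} (both $\bz,\bw_1 \in \Gr_{\sC n}(j_1+m_1/2) \subset \Gr_{\sC n}(j_1+m_1)$, and they lie in the stable tile $\Ub_{\sC n}(j_1,v)$) yields $\Pro(\bz,\bw_1) = (1+O(\eta_1))^{-1}\PT(\bz,\bw_1)$. Combining these,
$$\grad_{\Pro}(\bz,\theta_1,d_1) = (1+O(\eta_1+2^{-m_1/4}))\,\grad(\bz,\tilde\theta_1,|\bz-\bw_1|)$$
for some $\tilde\theta_1$ with $|\tilde\theta_1-\theta_1|=O(2^{-m_1/4})$; the analogous statement holds at $\bz'$.

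Next I would invoke the $(\delta_1,\ell_1,k_1)-\SST$ hypothesis on $\Ub_{\sC n}(j_1,v)$. Using Lemma \ref{l:gradtile} in the form \eqref{stabgrad12}, the true gradients at the two basepoints $\bz,\bz'$ in the tile, for the \emph{same} angle $\tilde\theta_1$ and for the two distances $|\bz-\bw_1|,|\bz'-\bw_2|$ (both of which lie in $[\ell_1,k_1\ell_1]$ by choice of parameters), agree up to a factor $1+\delta_1$. It remains to rotate from $\tilde\theta_1$ to $\tilde\theta_2$; since $|\tilde\theta_1-\tilde\theta_2|\le \eta_1 + O(2^{-m_1/4})$, applying the triangle inequality exactly as in \eqref{tria1} (using that edge weights are bounded by $b$) gives
$$|\PT(\bz',\bz'+(\tilde\theta_1,d))-\PT(\bz',\bz'+(\tilde\theta_2,d))|\le b(\eta_1+O(2^{-m_1/4}))d,$$
and dividing by $d$ (and using the lower bound $\grad \ge \alpha$ from $\sE$ to pass from an additive to a multiplicative error) produces a factor $1+O(\eta_1+2^{-m_1/4})$.

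The main (minor) bookkeeping obstacle is ensuring that all the intermediate distances $|\bz-\bw_1|, |\bz'-\bw_2|$ stay in the window $[\ell_1,k_1\ell_1]$ where the tile is stable, and that the perturbed angle $\tilde\theta_1$ is close enough to $\theta_2$ for the triangle inequality step to only cost $\eta_1 + O(2^{-m_1/4})$; both are immediate from the standing choice $\ell_1 = \sC n/2^{j_1+m_1}$, $k_1 = 2^{2m_1}$, $d_i \in (\sC n/2^{j_1+m_1/4},\sC n/2^{j_1})$, and $|\theta_1-\theta_2|\le \eta_1$. Multiplying the four multiplicative factors $1+O(\eta_1)$, $1+O(2^{-m_1/4})$, $1+\delta_1$, $1+O(\eta_1+2^{-m_1/4})$ produces the claimed bound $1+O(\delta_1+\eta_1+2^{-m_1/4})$.
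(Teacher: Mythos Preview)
Your proposal is correct and follows the approach the paper itself indicates (the paper omits the proof but states it follows ``as in \eqref{stabgrad12} along with an application of triangle inequality as in \eqref{tria1}''): pass from $\grad_{\Pro}$ to $\grad$ via Lemma \ref{compare1} and the $O(2^{-m_1/4})$ rounding error, compare basepoints via the tile stability \eqref{stabgrad12}, rotate angles via the \eqref{tria1}-style triangle inequality, and convert back. The only cosmetic point is that \eqref{stabgrad12} is literally stated for integer multiples $k_i\ell_1$ whereas your intermediate distances $|\bz-\bw_i|$ need not be; rounding to the nearest multiple of $\ell_1$ costs an additional $O(\ell_1/d_i)=O(2^{-3m_1/4})$ factor, which is already absorbed in your error budget.
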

Note that above we choose $|\theta_1-\theta_2|\le \eta_1$ where the latter appeared in the definition of $\Pro.$ This is done deliberately to avoid introducing new notation since for us any small enough value of $\eta_1$ would serve both the purposes.

This allows us to define a projected gradient for the entire tile as we did  in Definition \ref{tilegrad}.
\begin{defn}\label{tilegrad2proj} If $\Ub_{\sC n}(j_1,v)$ is $(\delta_1,\ell_1,k_1)-\SST,$ then let
$$\grad_{\Pro}((j_1,v),\theta):=\grad_{\Pro}(\bz,\theta, d)$$ for some arbitrary $\bz\in \Ub_n(j_1,v)\cap \Gr_{\sC n}(j_1+m_1/2),$ and $d$ such that the RHS is defined via \eqref{grad89}. Note that the definition depends on the choice of $\bz$ and $d$ but only up to a multiplicative factor of $(1+O(\delta_1+\frac{1}{2^{m_1/4}})),$ which can be made arbitrarily close to one by choosing the parameters appropriately. 
Hence for concreteness we choose $\bz$ to be the center point of $\Ub_n(j,v)$ and $d=\frac{n}{2^{j_1+m_1/8}}.$ 
\end{defn}

Essentially the fact that  $\PT$ satisfies the triangle inequality (by definition) is what leads to the convexity of the limit shape $\cB$ in \eqref{e:shape}. One might therefore hope that  ${\Pro}$ satisfies an approximate triangle inequality.  To formally state things,  it would be convenient to consider the following function on entire $\R^2$ given by the following: for any $\bo{w}=(\theta,r)$, $$\|\bo{w}\|_{(j,v)}=\|(\theta,r)\|_{(j,v)}=r\grad_{\Pro}((j,v),\theta).$$ Note that as in Definition \ref{tilegrad2proj}, this definition implicitly depends on the choice of $\bz$ and $d.$ The next lemma shows the approximate convexity of the above defined function which allows us to think of the above as roughly a norm. 

\begin{ppn} \label{conc1}If $\Ub_{\sC n}(j_1,v)$ is $(\delta_1,\ell_1,k_1)-\SST,$  then 
for any set of vectors $\bo{w}_1,\bo{w}_2,\ldots,\bo{w}_t$,  if $\bo{w}=\sum_{i=1}^t \bo{w}_i$  then 
 $$\|\bo{w}\|_{(j_1,v)}\le (1+O(\delta_1+\frac{1}{2^{\frac{m_1}{16}}}))\left(\sum_{i=1}^t\|\bo{w}_i\|_{(j_1,v)}\right).$$
\end{ppn}

The proof even though relies on an approximate triangle inequality is a little technical and is postponed to Section \ref{pconc1}. 
For the next result,  given $\delta_1, \e_1,$ and $m_1$ satisfying the hypothesis of Lemma \ref{most34}, let $j_1$ be the scale obtained from that lemma and recall the definitions of $\ell_1$ and $k_1$ from the statement of the same. Recalling  $\eta_1$, $\Pro=\overset{\eta_1,\ell_2,j_1}{\Pro}$ where $\ell_2=\frac{\sC n}{2^{j_1+\frac{m_1}{2}}}$ from Lemma \ref{compare1}. consider the set of images $\sP\sV_{\eta_1,\ell_2,j_1}$ from \eqref{imageset}.

In the sequel  to avoid introducing new notation we will in fact denote $\frac{\log\P(\sU^*_{\zeta}(n))}{n^2}$ by $\kappa$ even though it was used to define the lim sup  of $\frac{\log\P(\sU_{\zeta}(n))}{n^2}$ in \eqref{limsupinf}. We now state the following easy consequence of the pigeon-hole principle. 
 
\begin{lem}\label{proj23} Given the parameters as above and $\e_4>0$
there exists $\Im\in \sP\sV_{\eta_1,\ell_2, j_1}$ and $A \subset \llbracket 1, 2^{j_1} \rrbracket^2 $ such that $|A|= \e_1 2^{2j_1}$, such that 
$$\log\frac{\P(\sU^*_{\zeta}(n)\cap \Pro^{-1}(\Im)\cap \bigl\{\{v \in \llbracket 1, 2^{j_1}\rrbracket^2 :v \text{ is } (\delta_1, \ell_1, k_1) -\US\}\subset A\bigr\})}{n^2}\ge \kappa-\e_4,$$ for all large enough $n.$
\end{lem}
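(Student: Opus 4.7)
The plan is to obtain Lemma~\ref{proj23} as a straightforward pigeonhole argument once we have already localized the relevant probability mass via Lemma~\ref{most34} and the cardinality bound \eqref{imageset}. The key observation is that we need to distribute the probability of $\sU^*_\zeta(n)$ across only sub-exponentially (in $n^2$) many bins indexed by pairs $(\Im, A)$; any multiplicative loss $e^{o(n^2)}$ is harmless at the $\kappa - \e_4$ scale provided $n$ is large enough.

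First, I would set up the ``good'' event. Let
\[
\mathcal{G} := \sU^*_\zeta(n) \cap \bigl\{\#\{v \in \llbracket 1, 2^{j_1}\rrbracket^2 : v \text{ is } (\delta_1,\ell_1,k_1)\text{-}\US\} \le \e_1 2^{2j_1}\bigr\}.
\]
By Lemma~\ref{most34}, for the scale $j_1$ produced there, $\P(\mathcal{G} \mid \sU^*_\zeta(n)) \ge 1/J_2$, and hence
\[
\P(\mathcal{G}) \ge \frac{1}{J_2}\, \P(\sU^*_\zeta(n)) = \frac{1}{J_2}\, e^{\kappa n^2}.
\]
On $\mathcal{G}$, the (random) set $U := \{v \in \llbracket 1, 2^{j_1}\rrbracket^2 : v \text{ is } (\delta_1,\ell_1,k_1)\text{-}\US\}$ has cardinality at most $\e_1 2^{2j_1}$, so in particular $U$ is contained in some subset $A \subseteq \llbracket 1, 2^{j_1}\rrbracket^2$ of cardinality exactly $\e_1 2^{2j_1}$ (pad $U$ arbitrarily to this size).

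Next, I would count the number of possible labels. Each configuration in $\mathcal{G}$ deterministically produces (i) a value $\Im := \Pro \in \sP\sV_{\eta_1,\ell_2,j_1}$, and (ii) a subset $A$ of size $\e_1 2^{2j_1}$ containing $U$. By \eqref{imageset}, the number of admissible $\Im$ is at most $e^{o(n^2)}$. The number of admissible $A$ is at most $\binom{2^{2j_1}}{\e_1 2^{2j_1}} \le 2^{2^{2j_1}}$, which is a constant depending only on $J_2$ and $\e_1$ (crucially independent of $n$, since $j_1 \le J_2$). Therefore the total number of label pairs $(\Im, A)$ is at most $e^{o(n^2)}$.

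Finally, since the events $\{\Pro = \Im\} \cap \{U \subseteq A\}$ (over all choices of $(\Im, A)$) cover $\mathcal{G}$, by pigeonhole there exists at least one pair $(\Im, A)$ for which
\[
\P\bigl(\sU^*_\zeta(n) \cap \Pro^{-1}(\Im) \cap \{U \subseteq A\}\bigr) \;\ge\; \frac{\P(\mathcal{G})}{e^{o(n^2)}} \;\ge\; \frac{e^{\kappa n^2}}{J_2 \cdot e^{o(n^2)}}.
\]
Taking logarithms and dividing by $n^2$, the right-hand side is $\kappa - o(1)$, which is bounded below by $\kappa - \e_4$ for all sufficiently large $n$, completing the argument. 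There is no real obstacle here; the entire content has been packaged into the preceding stability and discretization results, and this lemma is essentially a bookkeeping step to freeze a single projected metric and a single superset of the unstable tiles without significant probability loss, so that the dilation/replication construction in the subsequent sections can operate on a deterministic template.
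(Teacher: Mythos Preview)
Your proposal is correct and follows essentially the same pigeonhole argument as the paper: both bound the number of possible $\Im$ values via \eqref{imageset} and the number of admissible subsets $A$ (the paper uses the entropy bound $e^{O(H(\e_1))2^{2j_1}}$ where you use the cruder $2^{2^{2j_1}}$, but both are constants independent of $n$), then invoke pigeonhole. Your version is slightly more explicit in invoking Lemma~\ref{most34} to set up the good event $\mathcal{G}$, whereas the paper leaves this implicit in the phrase ``given the parameters as above,'' but the logic is identical.
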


\begin{proof}Recall the trivial bound mentioned in \eqref{imageset}, $$|\sP\sV_{\eta_1,\ell_2, j_1}|=e^{O(2^{2(j_1+m_1)}\log\frac{1}{\eta_1})}=e^{O(1)}.$$ Moreover the possible subsets $A$ of $\llbracket 1,2^{j_1}\rrbracket^2$ of size at most $\e_1 2^{2j_1}$ is at most $e^{O(H(\e_1))2^{2j_1}}$ where $H(\cdot)$ is the entropy functional. 
Thus by pigeon-hole principle the result follows. 
\end{proof}
Henceforth, for $A$ as in Lemma \ref{proj23},  we will denote the above event i.e.,
\begin{equation}\label{basev23}
 \sU^*_{\zeta}(n) \cap \Pro^{-1}(\Im)\cap \bigl\{\{v \in  \llbracket 1,2^{j_1}\rrbracket^2 :v \text{ is } (\delta_1, \ell_1, k_1) -\US\}\subset A\bigr\},
 \end{equation}
which will be our building block for later constructions as $$\Bas:=\Bas(\eta_1,\delta_1,\ell_1,k_1,\e_1).$$
Note that the above definition should also contain $A$ as a parameter which we are suppressing to avoid cluttering.
\section{Constructing a Large Deviation Event at a Higher Scale}
\label{s:construct}

In this section we prove Proposition \ref{p:pathconstruct}.
With the definition and results from the previous section at our disposal, following the strategy outlined in Section \ref{outline},
for  any $n_1\gg n$, we now proceed to creating the favourable event $\Fav:=\Fav(n_1)$ which will imply $\sU_{\zeta'}(n_1)$ where $\zeta'\ge \zeta-O(\e),$ for some small $\e,$ and moreover, $$\frac{\log\P(\Fav(n_1))}{n_1^2}\ge \frac{\log\P(\sU^*_{\zeta}(n))}{n^2}-O(\e).$$

We start by defining certain key ingredients:  Fixing $\e_6>0,$ for brevity we adopt the following abbreviations 
\begin{align}\label{abbre34}
\fn_0:=\sC n_1(1+2\e_6),\,\, \fn_1:=\sC n_1(1+\e_6),\,\,\fn_2:=\sC n_1, \fn_3:= \sC n(1+\e_6), \fn_4:= \sC n. 
\end{align}
Moreover in the sequel we will denote $\Bb(\fn_i)$ as $\fb_i.$
$\Fav$ will be a function of the edges in $\mathfrak{B}_0,$
 with the property that on the event $\Fav,$  
$$\PT_{\fb_0}(\bo{0},\fb_0^c)\ge bn_1, \text{ and }
\PT_{\fb_0}(\bo{0}, \bo{n_1})\ge (\mu+\zeta-O(\e))n_1,$$ for some small $\e.$ Clearly this implies  that $\Fav \subset \sU_{\zeta'}(n_1)$ for some $\zeta'=\zeta-O(\e)$.
The basic geometry we shall be working with is the following.  Fix $j\in \N$. Tile the box $\fb_0$ by $\Ub_{\fn_0}(j,v)$ for $v\in [2^j]^2$. Now each such tile is a square of size $\frac{\fn_0}{2^j}$. For $v\in [2^j]^2$, consider the square with the same centre as $\Ub_{\fn_0}(j,v)$
and side length $\frac{\fn_1}{2^j}$. Call this square (closed) $\Ub^{*}_{\fn_1}(j,v)$; see Figure \ref{fig8}. It follows that neighbouring $\Ub^{*}_{\fn_1}(j,v)$'s are separated by vertical and horizontal strips of width at most $\e_6 \frac{\fn_1}{2^j}$. For obvious reasons, the set of all edges in $\fb_0$ that does not belong to any $\Ub^{*}_{\fn_1}(j,v)$ is called $\An^{\rm{ext}}(j,\fn_0)$. ($\rm{ext}$ stands for exterior, we will also consider corridors inside $\Ub^{*}_{\fn_1}(j,v)$) \textbf{Without loss of generality we shall assume that $\bo{0}=(0,0)$ and $\bo{n_1}=(n_1,0)$ are at the center of some (different) $\Ub^{*}_{\fn_1}(j,v)$'s}. 

\begin{figure}[h]
\centering
\includegraphics[scale=.6]{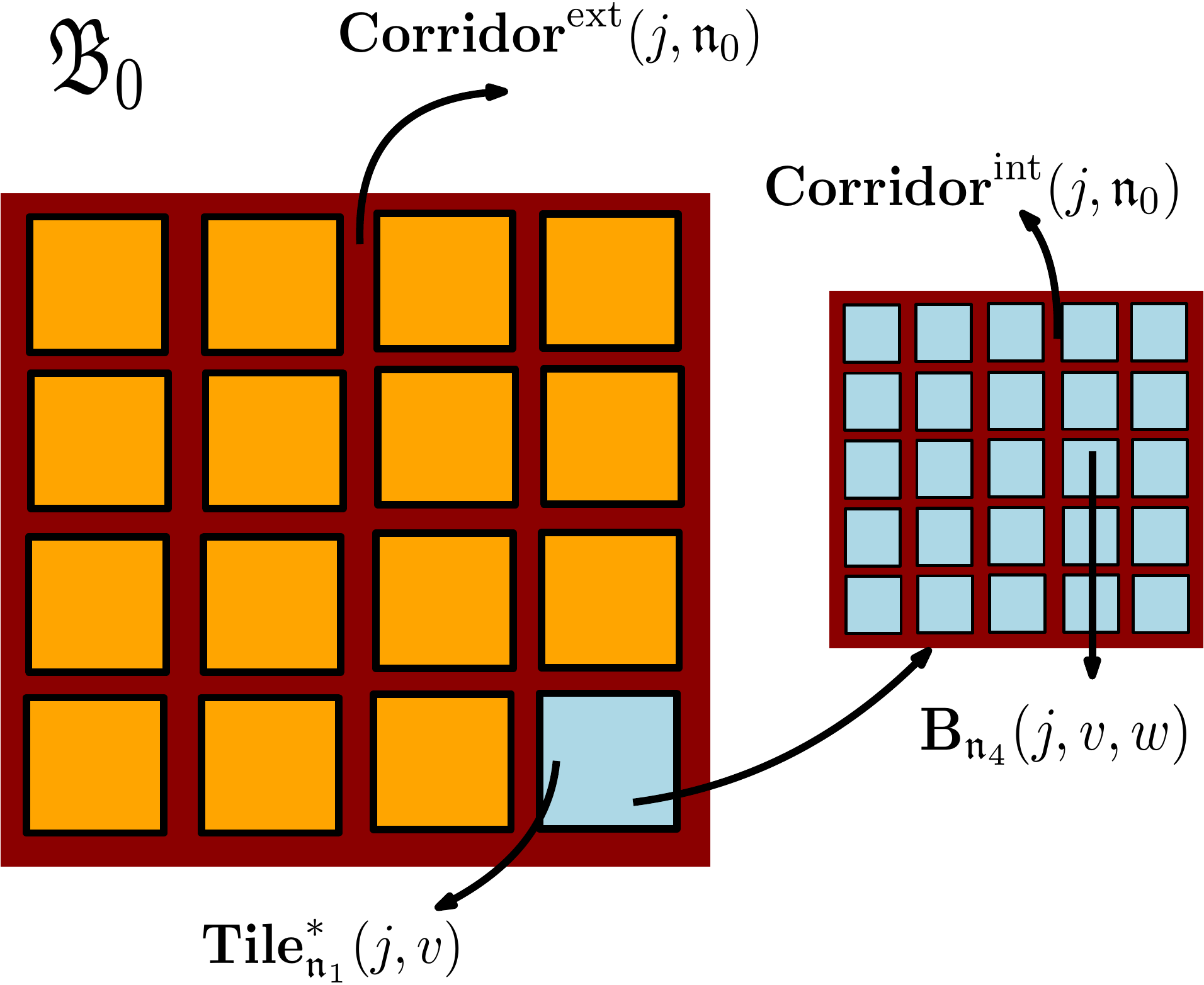}
\caption{The figure illustrates the basic structural definitions inside $\fb_0$. On the LHS the figure shows the $\An^{\rm{ext}}(j,\fn_0)$ (red region) and the tiling of the remaining area by $\Ub^{*}_{\fn_1}(j,v).$ The RHS zooms into one particular $\Ub^{*}_{\fn_1}(j,v),$ (the south-east one) and shows  $\bo{B}_{\fn_4}(j,v,w)$ and the surrounding $\bo{C}_{\fn_4}(j,v,w)$ which form a part of $\An^{\rm{int}}(j,\fn_0)$.}
\label{fig8}
\end{figure}

  Our construction of $\Fav$ will have two steps: 
  \begin{enumerate}[(i)]
\item Specifying the environment inside $\Ub^*_{\fn_1}(j,v)$ for various $v \in \llbracket1,2^j\rrbracket^2.$ 
\item Specifying the environment in $\An^{\rm{ext}}(j, \fn_0)$. 
\end{enumerate}
Part (i) involves a large deviation environment in the smaller scale $n$, whereas for the second part we just make all the edge weights close to $b$. We shall formalize part (i) later, but for now let us make part (ii) formal as follows. Let $\Ba^{\rm{ext}}(\fn_0,j)$ denote the event that the passage time on each edge in $\An^{\rm{ext}}(j, \fn_0)$ is in $[b-\e_7,b]$ for some small but fixed $\e_7$. As the total number of the edges in $\An^{\rm{ext}}(j, \fn_0)$ is $O(\e_6 \fn_0^2)$, it follows that $-\log \P (\Ba^{\rm{ext}}(\fn_0,j))= O(\e_6 n_1^2)$ (the constant in the $O(\cdot)$ notation depends on $\e_7,$ and $\e_6$ will be chosen to be much smaller than $\e_7$ depending on the edge distribution $\nu$). 

Recalling that the goal is to create an event on which the FPP distance (within the box $\fb_0$) between $\bo{0}$ and $\bo{n_1}$ is forced to be large, having constructed $\Ba^{\rm{ext}}(\fn_0,j)$ we are left to do  two more things:
\begin{enumerate}
\item Specifying the environments inside $\Ub^{*}_{\fn_1}(j,v)$ using the large deviation $\Bas$ defined in Lemma \ref{proj23}.
\item Using the above showing that any path $\gamma$ between $\mathbf{0}$ and $\bo{n_1}$ contained in $\fb_0$ has length $(\mu+\zeta-O(\e))n_1$. However to be able to use the properties of $\Bas$ (in particular the stability properties) we need some regularity properties of $\gamma.$  Hence the first step, given such an arbitrary $\gamma$ is to preprocess it to obtain another path $\cP(\gamma)$ from $\mathbf{0}$ and $\bo{n_1}$ such that the path $\cP(\gamma)$ has the desired regularity properties, and, on the event $\Ba^{\rm{ext}}(\fn_0,j)$, has length within a factor $(1+o(1))$ of the length of $\gamma$.
\end{enumerate}

To accomplish the first part for any $j$ and $v \in \brj,$ it will be convenient to think of each $\Ub^*_{\fn_1}(j,v)$ as naturally made up of $(\frac{n_1}{n})^2$ copies of $\Ub_{\fn_3}(j,v).$  
 We will denote the copy of the tile as $\bo{A}_{\fn_3}(j,v,w)$ for $w \in \llbracket 1, \frac{n_1}{n} \rrbracket^2,$
and as before each $\bo{A}_{\fn_3}(j,v,w)$ can be thought of as a copy of $\Ub_{\fn_4}(j,v)$ to be called $\bo{B}_{\fn_4}(j,v,w)$ surrounded by  an annulus $\bo{C}_{\fn_4}(j,v,w)$ of width $\frac{\e_6}{2}\frac{\fn_4}{2^j}$, (see Figure \ref{fig8}).
As before we denote the union of edges in $\bo{C}_{\fn_4}(j,v,w)$ union over $v\in \brj$ and $w \in \llbracket 1, \frac{n_1}{n} \rrbracket^2$ as $\An^{\rm{int}}(j, \fn_0).$ Now similar to $\Ba^{\rm{ext}}(\fn_0,j)$ let $\Ba^{\rm{int}}(\fn_0,j)$ denote the event that the passage time on each edge in $\An^{\rm{int}}(j, \fn_0)$ is in $[b-\e_7,b]$ and similar considerations as before show that $-\log \P (\Ba^{\rm{int}}(\fn_0,j))= O(\e_6 n_1^2)$.
 
We now prescribe the environment inside  $\bo{B}_{\fn_4}(j,v,w)$.
Since there are many parameters involved, to avoid repetition
throughout this section we will work with the choice of parameters as in Lemma \ref{proj23}. Note that this causes us from now to work with a specific scale $j_1$ and not a generic scale $j.$
Recall the set $A$ of size $\e_1 2^{2j_1}$ in the statement of Lemma \ref{proj23}. 
\subsection{Construction of $\Fav$:} At a high level the event $\Fav$ will be an intersection of three independent events i.e., 
$\Fav:=\Di \cap \Ba \cap \Boo,$ where the three events on the RHS will be independent.  We will use $\Ba$ to denote the intersection of the events $\Ba^{\rm{ext}}(\fn_0,j_1)$ and $\Ba^{\rm{int}}(\fn_0,j_1).$  $\Boo$ and $\Di$ will be used to define the edge weights in $\bo{B}_{\fn_4}(j_1,v,w)$ where $w \in \llbracket 1, \frac{n_1}{n}\rrbracket^2$ and $v\in A$ and $v\in \llbracket 1, 2^{j_1} \rrbracket^2\setminus A$ respectively. 
We define the event that the passage time on all the edges in $\bigcup_{v\in A,w}\bo{B}_{\fn_4}(j_1,v,w)$ is in $[b-\e_7,b]$ as $\Boo.$

Finally we define the event $\Di$  in the following constructive way:
Sample $\left(\frac{n_1}{n}\right)^2$ many independent realizations of $\Bas$ which yields environments $$\Pi_1,\Pi_2\ldots, \Pi_{\left(\frac{n_1}{n}\right)^2}$$
such that $\Pi_i\in \Bas,$
for all $i \in \llbracket 1, \frac{n_1}{n}\rrbracket^2.$ 
For each $v\in \llbracket 1, 2^{j_1} \rrbracket^2\setminus A$ and $w \in \llbracket 1, \frac{n_1}{n}\rrbracket^2$,  let the edge weights on the edges in $\bo{B}_{\fn_4}(j_1,v,w)$ be the same as the edge weights of $\Pi_w$ in $\Ub_{\fn_4}(j_1,v)$ where we use the natural identification between $\bo{B}_{\fn_4}(j_1,v,w)$ and $\Ub_{\fn_4}(j_1,v).$ 

Note that the choice of the term 
 $\Di$ to denote the above event is natural, as  by using   $(\frac{n_1}{n})^2$ copies of $\Bas$ we ensure that for any $v \in \llbracket 1, 2^{j_1} \rrbracket^2$, the environments in different $\bo{B}_{\fn_4}(j_1,v,w)$ are essentially the same.
Now the event  $\Ba$ along with $\Di$  describe the projection of the event $\Fav$ on all the edges except the edges in $\bigcup_{v\in A, w\in \llbracket 1, \frac{n_1}{n}\rrbracket^2}\bo{B}_{\fn_4}(j_1,v,w)$ whereas $\Boo$ defines those in the latter.  
Hence 
\begin{equation}\label{lb7896}
\P(\Fav)=[\P(\Bas)]^{\frac{n_1^2}{n^2}}\nu([b-\e_7,b])^{O((\e_1+\e_6) n_1^2)},\end{equation} 
where $\nu$ is the passage time distribution satisfying the hypothesis in Theorem \ref{t:ldp}.

The proof of Proposition  \ref{p:pathconstruct} will now be complete from the following lemma.
\begin{lem}\label{verify} Given $\e_8$ and $\e_9$ there exists choice of parameters in the definition of $\Bas$ in Lemma \ref{proj23},  and $\e_6,\e_7$ in the definition of $\Fav$,  such that $$\frac{\log(\P(\Fav))}{n_1^2}\ge \kappa-\e_8, \text{ and, } \Fav\subset \sU_{\zeta'}(n_1),$$
where $\zeta'>\zeta-\e_9.$
\end{lem}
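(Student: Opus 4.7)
The plan is to verify the two claims of the lemma: (i) $\log\P(\Fav)/n_1^2 \ge \kappa - \e_8$ and (ii) $\Fav \subset \sU_{\zeta-\e_9}(n_1)$. The probability bound is essentially bookkeeping: since $\Fav = \Di \cap \Ba \cap \Boo$ is an intersection of events determined by disjoint edge sets and therefore independent, \eqref{lb7896} yields
\[
\log \P(\Fav) \ge \tfrac{n_1^2}{n^2}\log \P(\Bas) + \log \P(\Ba) + \log \P(\Boo) \ge n_1^2(\kappa - \e_4) - C(\e_1+\e_6)|\log \e_7|\, n_1^2,
\]
using Lemma \ref{proj23} for the dilation term and the continuity of the density of $\nu$ (so that $\nu([b-\e_7,b]) = \Theta(\e_7)$) for the barrier/boosting terms. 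The parameters are then chosen in the order: first the $\Bas$-parameters so that $\e_4 < \e_8/3$; then $\e_7$ (further constrained by the containment argument below); and finally $\e_1,\e_6$ small enough that $C(\e_1+\e_6)|\log \e_7| < \e_8/3$.

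For the containment, fix any path $\gamma$ from $\bo{0}$ to $\bo{n_1}$ inside $\fb_0$; the goal is $\text{length}(\gamma) \ge (\mu+\zeta-\e_9)n_1$. Decompose $\gamma$ by its traversals of the big tiles $\Ub^*_{\fn_1}(j_1,v)$ and let $\bo{d}_v$ denote the (vector) sum of displacements of all arcs of $\gamma$ through big tile $v$, so $\sum_v \bo{d}_v = \bo{n_1}$. For good big tiles ($v \notin A$), each sub-tile $\bo{B}_{\fn_4}(j_1,v,w)$ is $(\delta_1,\ell_1,k_1)$-$\SST$ and all $w$ share the same gradient function $\grad_{\Pro}((j_1,v),\cdot)$ (since every $\Pi_w$ lies in $\Pro^{-1}(\Im)$). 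Combining the $\SST$-based length lower bound for each arc with two applications of Proposition \ref{conc1} — once across excursions inside each sub-tile and once across sub-tiles inside the big tile — shows the length of $\gamma$ in good big tiles is at least $(1-o(1))\sum_{v\notin A}\|\bo{d}_v\|_{(j_1,v)}$. In bad regions (tiles with $v\in A$ together with the external and internal corridors) every edge weighs at least $b-\e_7$ by $\Ba$ and $\Boo$, so that part of the length is at least $(b-\e_7)\sum_{v\in A}|\bo{d}_v|_1$.

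To lower bound $\sum_{v\notin A}\|\bo{d}_v\|_{(j_1,v)}$, I would transport the estimate to the $n$-scale. Set $\bo{d}^*_v = (n/n_1)\bo{d}_v$, so $\sum_v \bo{d}^*_v = \bo{n}$, and construct a virtual path $\gamma^*$ in $\fb_4$ from $\bo{0}$ to $\bo{n}$ that visits $n$-scale tiles in the same order as $\gamma$ visits big tiles, realising displacement $\bo{d}^*_v$ in tile $v$: in each good tile use a near-optimal path of length at most $(1+o(1))\|\bo{d}^*_v\|_{(j_1,v)}$ (this is essentially $\PT(\text{entry},\text{exit})$, close to the projected norm by $\SST$ together with Lemma \ref{compare1}), and in each bad tile use an axis-aligned path of length at most $b|\bo{d}^*_v|_1$. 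Since $\Bas \subset \sU^*_\zeta(n)$, the $n$-scale environment satisfies $\text{length}(\gamma^*) \ge \PT(\bo{0},\bo{n}) \ge (\mu+\zeta)n$; combining with the positive homogeneity $\|\bo{d}_v\|_{(j_1,v)} = (n_1/n)\|\bo{d}^*_v\|_{(j_1,v)}$ gives
\[
\sum_{v\notin A}\|\bo{d}_v\|_{(j_1,v)} \ge (\mu+\zeta-o(1))n_1 - b\sum_{v\in A}|\bo{d}_v|_1.
\]
Assembling the good and bad contributions,
\[
\text{length}(\gamma) \ge (1-o(1))(\mu+\zeta-o(1))n_1 + \bigl[(b-\e_7) - (1-o(1))b\bigr]\sum_{v\in A}|\bo{d}_v|_1,
\]
so by first choosing the stability parameters small enough that every $o(1)$ error is much less than $\e_9/b$, and then $\e_7$ small enough that the bracket $o(1)b - \e_7$ is non-negative, one obtains $\text{length}(\gamma) \ge (\mu+\zeta-\e_9)n_1$, as required.

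The main obstacle is the rigorous construction of the virtual path $\gamma^*$ and the accompanying tile-by-tile bookkeeping: one must match entry/exit points consistently across consecutive big-tile arcs of $\gamma$, convert $\SST$-based passage time estimates into the projected norm $\|\cdot\|_{(j_1,v)}$ via Lemma \ref{compare1}, and handle the case when $\gamma$ re-enters a single big tile several times with partially cancelling excursion displacements — exactly the situation controlled by the approximate triangle inequality of Proposition \ref{conc1}. One also has to order the parameter choices carefully so that the cumulative errors from stability, the projection discretisation, the corridor widths, and the bad-tile fraction all stay within the allotted budgets for $\e_8$ and $\e_9$.
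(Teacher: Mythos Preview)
Your overall architecture matches the paper's: bound $\P(\Fav)$ via \eqref{lb7896} and Lemma \ref{proj23}, then for any path $\gamma$ from $\bo{0}$ to $\bo{n_1}$ compare its length to that of a scaled-down companion path in a $\Bas$-environment at scale $n$, using the approximate norm $\|\cdot\|_{(j_1,v)}$ and Proposition \ref{conc1}. The probability bookkeeping is fine.

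There is, however, a genuine gap in the containment step. You write that the ``$\SST$-based length lower bound for each arc'' gives, for every sub-tile excursion, $\text{(arc length)} \ge (1-o(1))\|\text{displacement}\|_{(j_1,v)}$. This is only valid when the displacement lies in the stability range, i.e.\ has Euclidean length at least $\ell_1$ (or, more precisely, at least the scale at which \eqref{grad89} and Definition \ref{tilegrad2proj} make sense). For an arc whose entry and exit points are very close, the passage time can be genuinely much smaller than $\|\text{displacement}\|_{(j_1,v)}$: edges inside $\bo{B}_{\fn_4}(j_1,v,w)$ may have weight near $0$, and the event $\sE$ gives no lower bound below scale $\sqrt{n}$. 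Nothing in your outline prevents $\gamma$ from making many such short excursions, and your two applications of Proposition \ref{conc1} do not address this --- that proposition controls cancellation among displacements, not the failure of the per-arc length-to-norm comparison.

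The paper closes exactly this gap by a preprocessing step (Lemmas \ref{decomp120} and \ref{decomp121}) that replaces $\gamma$ by a regular path $\cP$ all of whose excursions (at both the big-tile and sub-tile levels) are \emph{large}, meaning they contain a point at distance $\ge \e_6^2 \frac{\fn_4}{2^{j_1}}$ from both endpoints. The key observation making this cheap is that every excursion is flanked by a corridor crossing $\chi_i$ of length $\ge (b-\e_7)\e_6\frac{\fn_0}{2^{j_1}}$ on $\Ba$, so one can graft onto any short excursion an artificial detour of length $O(b\e_6^2\frac{\fn_0}{2^{j_1}})$ and charge it to the adjacent $\chi_i$ (this is Lemma \ref{l:excursion}). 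After preprocessing, the displacement vectors $\overset{\rightarrow}{w}^{1}_{i,j},\overset{\rightarrow}{w}^{2}_{i,j},\overset{\rightarrow}{w}'_{i,j}$ all have Euclidean norm $\ge \e_6^2\frac{\fn_4}{2^{j_1}}$ (see \eqref{lbnorm}), which the paper explicitly flags as ``needed crucially'' for the $\SST$ comparison. Your outline should incorporate this step; once it is in place, your norm-and-convexity argument and the construction of the scaled path go through essentially as you describe, and indeed coincide with the paper's Lemma \ref{lb123}.
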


Note that the lower bound on the probability of $\Fav$ is a straightforward consequence of \eqref{lb7896} and Lemma \ref{proj23}.
The rest of the discussion is devoted to the proof of the second part which will follow from a series of lemmas.
Before stating the lemmas we roughly describe our strategy. 
The proof involves broadly showing that on the event $\Fav$  two things occur:

\begin{align}
\label{part1}
\PT_{\fb_0}(\bo{0},\bo{n_1})& \ge (\mu+\zeta')n_1, \\
\label{part2} 
\PT_{\fb_0}(\bo{0},\fb_0^c)& \ge b n_1.
\end{align}
Now the proof of both the above bounds is obtained by the same strategy. 
Keep in mind the two random fields given by $\Fav$ and $\Bas$  on $\fb_0$ and $\fb_4$ respectively. Recall that the former is a `dilation' of the latter by a factor of $\frac{\fn_2}{\fn_4},$ with some additional changes including the setting up of the barriers  and the boosting on the unstable tiles.

As outlined in Section \ref{outline}, given the above,  the strategy is to show that for any path $\gamma$ (joining $\bo{0}$ and $\bo{n_1}$) in $\fb_0,$ there exists a scaled version  $\gamma_{\Sc}$ (joining $\bo{0}$ and $\bo{n}$) in $\fb_4$ such that 
\begin{equation}\label{scale879}
|\gamma|\ge \frac{n_1}{n}(1-o(1))|\gamma_{\Sc}|,
\end{equation}
 where the LHS is computed on $\Fav$ and the RHS is computed on $\Bas$.
Thus $\gamma$ can be thought of as a path obtained by dilating the path $\gamma_{\Sc}$.

Since $\gamma_{\Sc}$ is a path in the random field given by $\Bas,$ it follows by definition of the latter that $|\gamma_{\Sc}|\ge (\mu+\zeta)n$ and this yields the sought lower bound of $|\gamma|.$ 
To make \eqref{scale879} formal  we need some regularity properties of the path $\gamma$ which will be obtained by some preprocessing. This is done in the next section.\subsection{Preprocessing of Paths}
\label{s:process}
 Observe that given any path $\gamma$ contained in $\fb_0$  it admits a unique decomposition as a concatenation of a number of paths i.e., $\gamma= \alpha_0\chi_0\alpha_1\chi_1\alpha_2\chi_2\ldots \alpha_{L}\chi_{L}\alpha_{L+1}$ with the following properties:
\begin{enumerate}[i.]
\item Each $\alpha_i$ is contained in some $\Ub^*_{\fn_1}(j_1,v_i)$ for some $v_i \in \llbracket 1, 2^{j_1} \rrbracket ^2$; $\alpha_0$ and $\alpha_{L+1}$ could be empty. 
\item Each $\chi_i$ is contained in $\An^{\rm{ext}}(j_1, \fn_0)$.
\end{enumerate}
\begin{figure}[h]
\centering
\includegraphics[scale=.7]{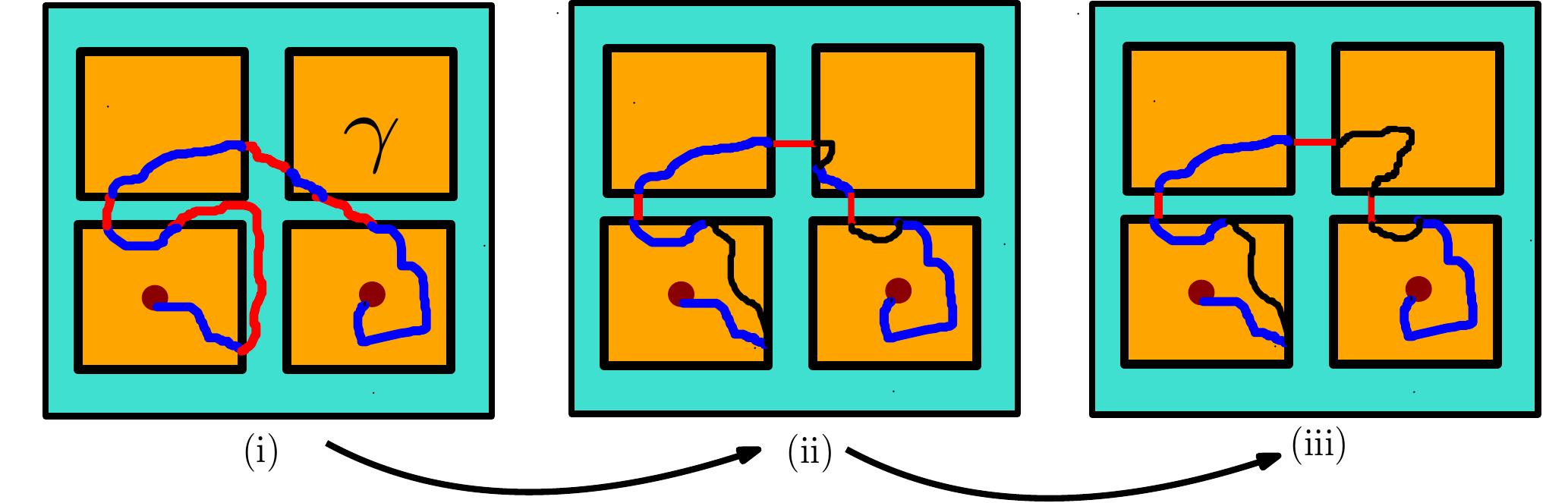}
\caption{A schematic diagram describing the preprocessing. (i)  illustrates the decomposition of the path $\gamma$, into $\alpha_i$ (blue segments) and $\chi_i$ (red segments). (ii) describes the content of Lemma \ref{l:outside} where each red segment is replaced by a regular path. (iii) describes the content of Lemma \ref{l:excursion} where if an excursion is small (the part in the north-east tile) then we replace it by a larger excursion without changing the length too much. }
\label{fig9}
\end{figure}
Given $\e_6$ as in \eqref{abbre34} let us call the paths $\alpha_i$ for $i\in \{1,2,\ldots, L\}$ as \textbf{excursions} of $\gamma$ and let us call the above decomposition of $\gamma$ its decomposition into excursions. Let $\mathbf{x}_i$ (resp.\ $\mathbf{y}_i$) denote the starting (resp.\ ending) vertex of $\alpha_i$. Let us call the excursion $\alpha_i$ \textbf{large} if there exists a vertex $\bz_i$ on $\alpha_i$ such that $\min\{|\bx_i-\bz_i|, |\by_i-\bz_i|\} \geq \e_6^2\frac{\fn_0}{2^{j_1}}$. Observe that $\alpha_i$ is large if $|\mathbf{x}_i-\mathbf{y}_i|\geq 2\e_6^2\frac{\fn_0}{2^{j_1}}$.
 
We shall need to define one more property of a path. Consider a path $\gamma$ with the decomposition into excursions as above. Observe that each $\chi_i$ must start at $\Ub^{*}_{\fn_1}(j_1,v)$ and end at some $\Ub^{*}_{\fn_1}(j_1,v')$ for some $v=v(\chi_i),v'=v'(\chi_i)\in [2^{j_1}]^2$. We call the path $\gamma$ \textbf{regular} if for each $\chi_i$ we have $\|v(\chi_i)-v'(\chi_i)\|_1=1$ (i.e., they are neighbouring vertices) and the starting point of $\chi_i$ lies in the same vertical (if $v$ and $v'$ are on the same vertical line) or horizontal (if $v$ and $v'$ are on the same horizontal line) line as its endpoint.
Recall the parameters $\e_6$ and $\e_7$ in the definition of the event $\Fav.$
\begin{lem} \label{decomp120} 
For any path $\gamma$ starting at $\mathbf{0}$ and ending at $\bo{n_1}$  and contained in $\fb_0$, there exists a regular path $\cP$ from $\mathbf{0}$ to $(n_1,0)$ such that 
\begin{enumerate}[i.]
\item All the excursions of $\cP$ are large. 
\item On $\Ba^{\rm{ext}}(\fn_0,{j_1})$, we have $|\gamma|\ge (1-O(\e_7+\e_6))|\cP|.$
\end{enumerate}
\end{lem}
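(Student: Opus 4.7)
The plan is to construct $\cP$ as a local, two-stage modification of $\gamma$ and then verify the length comparison. Write the decomposition $\gamma = \alpha_0\chi_0\alpha_1\chi_1\cdots\alpha_L\chi_L\alpha_{L+1}$, let $\mathbf{y}_i$ (resp.\ $\mathbf{x}_{i+1}$) denote the endpoint of $\alpha_i$ (resp.\ the start of $\alpha_{i+1}$) on the relevant tile boundary, and set $k_i := \|v_i - v_{i+1}\|_1$. For bookkeeping write $|\gamma|_C := \sum_i |\chi_i|$ and $|\gamma|_T := \sum_i |\alpha_i|$, so that $|\gamma| = |\gamma|_C + |\gamma|_T$.

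\textbf{Construction.} In the first stage, I would replace each $\chi_i$ by a lattice path chasing the straight $L^1$-trajectory from $\mathbf{y}_i$ to $\mathbf{x}_{i+1}$ through a monotone sequence of tiles $v_i = u_0, u_1, \ldots, u_{k_i} = v_{i+1}$: $k_i$ perpendicular corridor hops (each with endpoints on the same axis-parallel line, ensuring regularity) interleaved with $k_i - 1$ minimal-length excursions inside the intermediate tiles $u_1, \ldots, u_{k_i-1}$. In the second stage, any excursion of the resulting path (either a preserved $\alpha_i$ or a newly inserted intermediate one) that is not already large gets attached a perpendicular detour of depth $\e_6^2\fn_0/2^{j_1}$ into the tile interior, adding $2\e_6^2\fn_0/2^{j_1}$ edges.

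\textbf{Length bound.} On $\Ba^{\rm{ext}}(\fn_0,j_1)$ every corridor edge has weight in $[b-\e_7, b]$; since any lattice path has edge count at least the $L^1$-distance between its endpoints, $|\chi_i| \ge (b-\e_7)\|\mathbf{y}_i - \mathbf{x}_{i+1}\|_1$. The stage-1 replacement for $\chi_i$ (corridor hops plus intermediate excursions) has edge count at most $\|\mathbf{y}_i - \mathbf{x}_{i+1}\|_1 + 2(k_i - 1)\e_6^2\fn_0/2^{j_1}$, hence weighted length at most $\tfrac{b}{b-\e_7}|\chi_i| + 2b(k_i-1)\e_6^2\fn_0/2^{j_1}$. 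Using $\|\mathbf{y}_i - \mathbf{x}_{i+1}\|_1 \ge k_i\fn_0/2^{j_1} - 2\fn_1/2^{j_1}$ for bulk $k_i \ge 3$ (so that $k_i - 1 \lesssim \|\mathbf{y}_i - \mathbf{x}_{i+1}\|_1/(\fn_0/2^{j_1})$) and the total count $L \le |\gamma|_C/\bigl((b-\e_7)\e_6\fn_2/2^{j_1}\bigr)$ (since each $\chi_i$ has at least one corridor-width crossing) for the small-$k_i$ correction, the detour overheads sum to $O(\e_6)|\gamma|_C$. The stage-2 enlargement of the preserved $\alpha_i$'s similarly contributes at most $(L+1) \cdot 2b\e_6^2\fn_0/2^{j_1} = O(\e_6)|\gamma|_C$. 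Altogether $|\cP| \le |\gamma|_T + (1+O(\e_6+\e_7))|\gamma|_C \le (1+O(\e_6+\e_7))|\gamma|$, i.e., $|\gamma| \ge (1-O(\e_6+\e_7))|\cP|$ as required.

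\textbf{Main obstacle.} The chief technical nuisance is the geometric bookkeeping: for each $\chi_i$ the intermediate sequence $u_0, \ldots, u_{k_i}$ and the corresponding entry/exit points on intermediate tile boundaries must be chosen consistently so that the whole concatenation is a valid lattice path whose corridor decomposition is regular. The delicate special cases---$\chi_i$'s crossing a four-tile corridor junction, where $\|\mathbf{y}_i - \mathbf{x}_{i+1}\|_1$ can be as small as the corridor width $\e_6\fn_2/2^{j_1}$ and the intermediate excursion must be forced by a minimal two-edge entry into a tile; $\mathbf{y}_i$ sitting on a side of $v_i$ not facing $v_{i+1}$, which constrains the direction of the first corridor hop; and matching the endpoints $\mathbf{0}$ and $\mathbf{n_1}$ at the two extremes of $\cP$---each require small case-by-case adjustments, but in every case the $L^1$-length identity used above survives up to a multiplicative $(1+O(\e_6))$ distortion, so the final bound is unaffected.
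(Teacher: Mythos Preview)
Your proposal is correct and follows essentially the same two-stage strategy as the paper: first replace each corridor segment $\chi_i$ by an axis-parallel (hence regular) $L^1$ path between its endpoints, then enlarge every small excursion by a detour of depth $\e_6^2\fn_0/2^{j_1}$. The paper separates these into two lemmas and exploits the clean identity $\sum_i\|u_i-u_{i+1}\|_1=\|\bx-\by\|_1$ along the $L^1$ path to get $|\chi|\ge(1-O(\e_7))|\cP_\chi|$ directly, while you instead do a global count of excursions via $L\le |\gamma|_C/((b-\e_7)\e_6\fn_2/2^{j_1})$ and charge each detour against a corridor crossing; both routes give the same $(1-O(\e_6+\e_7))$ factor.
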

The proof of the above lemma is done in two steps (see Figure \ref{fig9} for an illustration). Let $\gamma$ be fixed as in the lemma. Consider its decomposition into excursions: $\gamma=\alpha_0\chi_0\alpha_1\chi_1\alpha_2\chi_2\cdots$. Observe that if we can replace each $\chi_i$ by a regular path with the same endpoints, the resulting path will be regular. The following lemma shows that this can be done without increasing the length of the path by more than a factor of $(1-O(\e_7))^{-1}$. 

\begin{lem}
\label{l:outside}
Consider a path $\chi$ completely contained in $\An^{\rm{ext}}(j_1, \fn_0)$ whose starting and ending points are located at the boundary of $\Ub^{*}_{\fn_1}(j_1,v)$ and $\Ub^{*}_{\fn_1}(j_1,v')$ respectively. Then there exists a regular path $\cP_{\chi}$ with the same starting and ending point such that on $\Ba^{\rm{ext}}(\fn_0,{j_1})$, we have $|\chi|\geq (1-O(\e_7))|\cP_{\chi}|$. 
\end{lem}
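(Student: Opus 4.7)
My plan is to construct $\cP_\chi$ explicitly as a monotone $\ell^1$-shortest (``Manhattan'') path from $\mathbf{p}$ to $\mathbf{q}$, engineered so that every corridor segment of its excursion decomposition is a straight perpendicular crossing between two $\ell^1$-adjacent tiles of the scale-$j_1$ tiling. Once this is in hand, the length inequality follows immediately from the two-sided weight control: a pointwise lower bound of $b-\e_7$ on all corridor edges from $\Ba^{\rm{ext}}(\fn_0,j_1)$, and the deterministic upper bound $b$ on every edge.

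For the construction, write $\mathbf{p}=(p_x,p_y)$ and $\mathbf{q}=(q_x,q_y)$, lying on the boundaries of $\Ub^{*}_{\fn_1}(j_1,v)$ and $\Ub^{*}_{\fn_1}(j_1,v')$ respectively. By symmetry (reflecting coordinates if needed) assume $v'_1\geq v_1$ and $v'_2\geq v_2$, and traverse the monotone lattice sequence of tiles $v=(v_1,v_2),(v_1+1,v_2),\ldots,(v'_1,v_2),(v'_1,v_2+1),\ldots,(v'_1,v'_2)=v'$. Inside the starting tile, take a monotone sub-path from $\mathbf{p}$ to the point on its right boundary with $y$-coordinate $p_y$; then cross the adjacent vertical corridor to the left boundary of the next tile along the straight horizontal line $y=p_y$. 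In each intermediate ``horizontal-leg'' tile continue horizontally at height $p_y$ to its right boundary and perform the next straight horizontal corridor crossing. At the corner tile $(v'_1,v_2)$ take a monotone sub-path from its left-boundary entry point to the point on its top boundary with $x$-coordinate $q_x$; then alternate straight vertical corridor crossings at $x=q_x$ with vertical tile-interior traversals at $x=q_x$ through each ``vertical-leg'' tile. Finally enter $\Ub^{*}_{\fn_1}(j_1,v')$ from below at $x=q_x$ and take a monotone sub-path to $\mathbf{q}$. Because every step of this path moves only in a direction that decreases the $\ell^1$-distance to $\mathbf{q}$, one has
$$\#\{\text{edges of }\cP_\chi\}=\|\mathbf{p}-\mathbf{q}\|_1,$$
and each corridor segment is a straight line between $\ell^1$-adjacent tiles, confirming that $\cP_\chi$ is regular. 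Degenerate cases ($v=v'$, $v_1=v'_1$, or $v_2=v'_2$) are handled by simply skipping the absent leg.

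For the length comparison, every edge of $\chi$ lies in $\An^{\rm{ext}}(j_1,\fn_0)$, so on $\Ba^{\rm{ext}}(\fn_0,j_1)$ each such edge has weight at least $b-\e_7$; combined with the trivial bound that any lattice path between $\mathbf{p}$ and $\mathbf{q}$ has at least $\|\mathbf{p}-\mathbf{q}\|_1$ edges, this gives
$$|\chi|\;\geq\;(b-\e_7)\,\#\{\text{edges of }\chi\}\;\geq\;(b-\e_7)\,\|\mathbf{p}-\mathbf{q}\|_1.$$
Since every edge weight is deterministically at most $b$,
$$|\cP_\chi|\;\leq\;b\,\#\{\text{edges of }\cP_\chi\}\;=\;b\,\|\mathbf{p}-\mathbf{q}\|_1,$$
and dividing the two bounds yields $|\chi|\geq \tfrac{b-\e_7}{b}\,|\cP_\chi|=(1-O(\e_7))|\cP_\chi|$, as required.

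The only genuinely delicate point is that $\cP_\chi$ must attain the \emph{absolute} minimum edge count $\|\mathbf{p}-\mathbf{q}\|_1$: the tile-interior edges used along the alignment sub-paths enjoy only the crude deterministic upper bound $b$, so any non-monotone detour would be amplified by the factor $\tfrac{b}{b-\e_7}$ and spoil the $(1-O(\e_7))$ estimate. The construction above is Manhattan by design, and checking that each alignment sub-path can indeed be chosen monotone is immediate because in each case the required displacement lies in a single coordinate-quadrant relative to the current position.
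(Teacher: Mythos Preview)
Your argument is correct and follows essentially the same route as the paper: build an $\ell^1$-shortest path from $\mathbf{p}$ to $\mathbf{q}$ whose corridor segments are straight perpendicular crossings (hence regular), then bound $|\chi|\ge (b-\e_7)\|\mathbf{p}-\mathbf{q}\|_1$ via the barrier event and $|\cP_\chi|\le b\|\mathbf{p}-\mathbf{q}\|_1$ via the deterministic edge bound. The only cosmetic difference is that the paper, after laying down the horizontal-then-vertical $\ell^1$ path and decomposing it into tile-interior and corridor pieces, replaces each tile-interior piece by the FPP-shortest path within that tile rather than keeping the explicit monotone lattice segment; since this replacement can only decrease $|\cP_\chi|$, the same upper bound $b\|u_i-u_{i+1}\|_1$ holds and the conclusion is identical.
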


\begin{proof}
Let $\bx$ and $\by$ be the starting and ending point of $\chi$ respectively. Consider the $1-$norm minimizing path from $\bx$ to $\by$ that constitutes of a horizontal path followed by a vertical path (this choice is arbitrary): i.e., for $\bx=(x_1,x_2)$ and $\by=(y_1,y_2)$ consider the piecewise linear curve $\mathbb{L}$ obtained by concatenating the straight line segment obtained by joining $\bx$ to $(y_1,x_2)$ followed by the straight line segment obtained by joining $(y_1,x_2)$ to $\by$. Consider $\L$ as a path on the nearest neighbour graph of $\Z^2$. Observe that there exists points $u_0=\bx, u_1, \ldots, u_{\ell}=\by$ on $\mathbb{L}$ all on boundaries of $\Ub^{*}_{\fn_1}(j_1,u)$'s  such that the $\mathbb{L}$ restricted between $u_{i}$ and $u_{i+1}$ (called $\mathbb{L}_i$) is either (a) contained in $\Ub^{*}_{\fn_1}(j_1,u)$ for some $u$ (type A, say) or (b) is entirely contained in $\An^{\rm{ext}}(j_1, \fn_0)$, and further $u_i\in \Ub^{*}_{\fn_1}(j_1,u), u_{i+1}\in \Ub^{*}_{\fn_1}(j_1, u')$ for some $u,u'$ that have $\ell_1$ distance one (type $B$). Observe again that such a decomposition is unique. Now if $\mathbb{L}_i$ is type $A$ let us set $\cP_i$ to be the shortest path between $u_i$ and $u_{i+1}$ contained in $\Ub^{*}_{\fn_1}({j_1},u)$, and if $\mathbb{L}_i$ is type $B$ we set $\cP_i=\mathbb{L}_i$. Consider the path $\cP_{\chi}=\cP_0\cP_1\cdots \cP_{\ell-1}$ obtained by concatenating $\cP_{i}$'s. It is clear that the path $\cP_{\chi}$ obtained as above is regular, (see Figure \ref{fig9} for an illustration) and hence it only remains to show that on $\Ba^{\rm{ext}}(\fn_0,{j_1})$, we have $|\chi|\geq (1-O(\e_7))|\cP_{\chi}|$. Observe first that, on $\Ba^{\rm{ext}}(\fn_0,{j_1})$, we have $|\chi|\geq (b-\e_7)\|\bx-\by\|_1$. It also follows from definitions that $|\cP_i|\leq b\|u_i-u_{i+1}\|_1$. The lemma now follows from observing that $\sum_{i} \|u_i-u_{i+1}\|_1= \|\bx-\by\|_1$. 
\end{proof}

Lemma \ref{l:outside} tells us that for any $\gamma$ as in the statement of Lemma \ref{decomp120} one can replace the paths $\chi_i$ in its decomposition by the paths $\cP_{\chi_i}$ as constructed in Lemma \ref{l:outside} to end up with a regular path $\cP_{*}$ with the same endpoints such that $|\gamma|\geq (1-O(\e_7))|\cP_*|$. The following lemma ensuring the largeness of the excursions, therefore will suffice to complete the proof of Lemma \ref{decomp120}.

\begin{lem}
\label{l:excursion}
For any regular path $\gamma$ starting at $\mathbf{0}$ and ending at $\bo{n_1}$ and contained in $\fb_0$, there exists a regular path $\cP$ from $\mathbf{0}$ to $\bo{n_1}$ such that 
\begin{enumerate}[i.]
\item Each excursion of $\cP$ is large. 
\item On $\Ba^{\rm{ext}}(\fn_0,j_1)$, we have $|\gamma|\ge (1-O(\e_6))|\cP|.$
\end{enumerate}
\end{lem}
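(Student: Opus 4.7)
The plan is to surgically modify $\gamma$ only at the small excursions, extending each into a deep detour inside its tile while leaving all inter-tile segments $\chi_i$ untouched so that regularity is preserved for free.

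\textbf{Step 1 (Decomposition and triviality at the endpoints).} Write $\gamma=\alpha_0\chi_0\alpha_1\chi_1\cdots\alpha_L\chi_L\alpha_{L+1}$. Since $\bo{0}=(0,0)$ and $\bo{n_1}=(n_1,0)$ sit at the centers of their respective tiles $\Ub^{*}_{\fn_1}(j_1,\cdot)$, the excursions $\alpha_0$ and $\alpha_{L+1}$ are automatically large (they contain a vertex at distance at least $\tfrac12\tfrac{\fn_1}{2^{j_1}}\gg \e_6^2\tfrac{\fn_0}{2^{j_1}}$ from their boundary endpoint). Because the endpoints of $\gamma$ lie in different tiles, $L\geq 1$.

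\textbf{Step 2 (The detour).} For each small excursion $\alpha_i$, $1\leq i\leq L$, recall the observation in the paper that small forces $|\bx_i-\by_i|\le 2\e_6^2\tfrac{\fn_0}{2^{j_1}}$. Pick a lattice point $\bz_i$ inside $\Ub^{*}_{\fn_1}(j_1,v_i)$ on the inward normal to the boundary near $\bx_i$, at Euclidean distance between $2\e_6^2\tfrac{\fn_0}{2^{j_1}}$ and $4\e_6^2\tfrac{\fn_0}{2^{j_1}}$ from both $\bx_i$ and $\by_i$. This is possible because $\e_6^2\fn_0\ll \fn_1$ for small $\e_6$, so the required annulus fits inside the tile. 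Let $\hat{\alpha}_i$ be the concatenation of $\ell^1$-shortest lattice paths from $\bx_i$ to $\bz_i$ and from $\bz_i$ to $\by_i$; by construction $\hat{\alpha}_i$ stays inside $\Ub^{*}_{\fn_1}(j_1,v_i)$, is large (it witnesses $\bz_i$), and has at most $8\e_6^2\tfrac{\fn_0}{2^{j_1}}$ edges.

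\textbf{Step 3 (Assemble $\cP$ and compare lengths).} Replace every small $\alpha_i$ by $\hat{\alpha}_i$ to obtain $\cP$; since the $\chi_i$'s and the endpoints of every excursion are unchanged, $\cP$ is regular, and all its excursions are large. Using only the deterministic bound $X_e\le b$, every detour contributes passage time at most $8b\e_6^2\tfrac{\fn_0}{2^{j_1}}$, so
\[
|\cP|-|\gamma|\;\leq\;\sum_{i:\,\alpha_i\text{ small}}|\hat{\alpha}_i|\;\leq\;L\cdot 8b\,\e_6^2\,\frac{\fn_0}{2^{j_1}}.
\]
On the other hand, since each $\chi_i$ crosses the corridor of width $(\fn_0-\fn_1)/2^{j_1}=\e_6\sC n_1/2^{j_1}$, and on $\Ba^{\rm{ext}}(\fn_0,j_1)$ every edge of $\chi_i$ has weight at least $b-\e_7$,
\[
|\gamma|\;\geq\;\sum_{i=0}^{L}|\chi_i|\;\geq\;L\,(b-\e_7)\,\frac{\e_6\,\sC n_1}{2^{j_1}}.
\]
Taking the ratio and using $\fn_0=\sC n_1(1+2\e_6)$ gives $|\cP|\le\bigl(1+O(\e_6/(b-\e_7))\bigr)|\gamma|=\bigl(1+O(\e_6)\bigr)|\gamma|$, equivalently $|\gamma|\ge(1-O(\e_6))|\cP|$.

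\textbf{Main obstacle.} The only real issue is accounting: one must verify that the number of small excursions is bounded by (a constant times) the number of $\chi$-segments, so that the per-detour overhead $\e_6^2\fn_0/2^{j_1}$ is dominated by the per-$\chi$ lower bound $\e_6\sC n_1/2^{j_1}$ coming from $\Ba^{\rm{ext}}$. This is automatic because a small excursion must be sandwiched between two corridor crossings, contributing one unit to both counts, and yields precisely the single factor of $\e_6$ needed to match the right-hand side of the lemma.
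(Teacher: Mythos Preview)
Your proof is correct and follows essentially the same approach as the paper: replace each small excursion $\alpha_i$ by a short detour through a point $\bz_i$ at depth $\Theta(\e_6^2\fn_0/2^{j_1})$ inside the tile, then control the added length against the guaranteed length of the adjacent corridor segment $\chi_i$ on the barrier event. The only cosmetic differences are that you make the automatic largeness of $\alpha_0,\alpha_{L+1}$ explicit and do the length comparison globally (summing all detours against all $\chi_i$'s) rather than per-excursion as the paper does; both give the same $O(\e_6)$ factor.
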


\begin{proof}
Let $\gamma$ be as in the statement of the lemma. Consider its decomposition into excursions $\gamma=\alpha_0\chi_0\alpha_1\chi_1\alpha_2\chi_2\cdots \alpha_{L}\chi_{L}\alpha_{L+1}$. The proof, again will be a step by step procedure, we shall inspect the short excursions one by one, and remove them by modifying the path locally without increasing the lengths too much. Let $\alpha_i$ be contained in 
$\Ub^{*}_{\fn_1}(j_1,v_i)$. We shall establish the following: for any excursion $\alpha_{i}$ that is not large, there exists a path $\alpha'_{i}$ contained in $\Ub^{*}_{\fn_1}(j_1,v_i)$ with the same starting and ending point as $\alpha_{i}$  such that: (i) $\alpha'_i$  is a large excursion and (ii) on $\Ba^{\rm{ext}}(\fn_0,j_1)$, we have $|\alpha_i \chi_{i}|\geq (1-O(\e_{6}))|\alpha'_i \chi_i|$. Before proving this, let us observe that this clearly suffices. Consider the path $\cP=\alpha_0\chi_0\alpha'_1\chi_1\cdots \alpha'_{L}\chi_{L}\alpha_{L+1}$ where $\alpha'_{i}$ is as above if $\alpha_{i}$ is not a large excursion and $\alpha'_{i}=\alpha_{i}$ otherwise. Clearly the above exhibits a decomposition of $\cP$ into excursions which ensures that $\cP$ is regular. The second assertion of the lemma is immediate from the bound on $|\alpha'_i \chi_i|$. It remains to prove the claim. 

Consider any excursion $\alpha_i$ that is not large. Let $\bx_i$ and $\by_i$ be its starting and ending points respectively. Fix a vertex $\bz_i$ in $\Ub^{*}_{\fn_1}(j_1,v_i)$ such that $|\bx_i-\bz_i|, |\by_i-\bz_i|\in (\e_6^2\frac{\fn_0}{2^{j_1}}, 2\e_6^2\frac{\fn_0}{2^{j_1}})$; clearly such a vertex exists. now consider the path $\alpha'_i=\alpha^{(1)}_i\alpha^{(2)}_i$ where $\alpha^{(1)}_i$ (resp.\ $\alpha^{(2)}_i$) is the shortest path between $\bx_i$ and $\bz_i$ (resp.\ $\bz_i$ and $\by_i$) contained in $ \Ub^{*}_{\fn_1}({j_1},v_i)$. Clearly $\alpha'_i$ is a large excursion. To get an upper bound on $|\alpha'_i\chi_i|$, observe that $|\alpha'_i|\leq 8b\e_6^2 \frac{\fn_0}{2^{j_1}}$ and on $\Ba^{\rm{ext}}(\fn_0,{j_1})$,  by taking $\e_6,\e_7$ sufficiently small we have $|\chi_i|\geq (b-\e_7)\e_6 \frac{\fn_0}{2^{j_1}}$, and $|\alpha_i \chi_{i}|\geq (1-O(\e_6))|\alpha'_i \chi_i|$. This completes the proof of the lemma. 
\end{proof}

Given the regular path $\cP=\alpha_0\chi_0\alpha_1\chi_1\alpha_2\chi_2\ldots \alpha_{L}\chi_{L}\alpha_{L+1}$ from Lemma \ref{decomp120}, we use essentially the same arguments on each of the excursions $\alpha_i$ as in the proof of Lemma \ref{decomp120} to obtain a further decomposition in to excursions, i.e. if $\alpha_i$ is contained in $\Ub^*_{\fn_1}(j_1,v_i)$ then the further excursions would be contained in $\bo{B}(\fn_4,v_i,w)$ for some $w\in \llbracket1, \frac{n_1}{n} \rrbracket^2.$ We now work with the obvious adaptations of the terms $\textbf{regular}$ (replacing $\Ub^*_{\fn_1}(j_1,v)$ and $\Ub^*_{\fn_1}(j_1,v')$ by $\bo{B}(\fn_4,v_i,w)$ and $\bo{B}(\fn_4,v_i,w')$ respectively) and $\textbf{large}$ (replacing $\fn_0$ by $\fn_4$).

Using the above altered definitions along with the same argument as before we obtain the following {whose proof we omit}.
\begin{figure}[h]
\centering
\includegraphics[scale=.5]{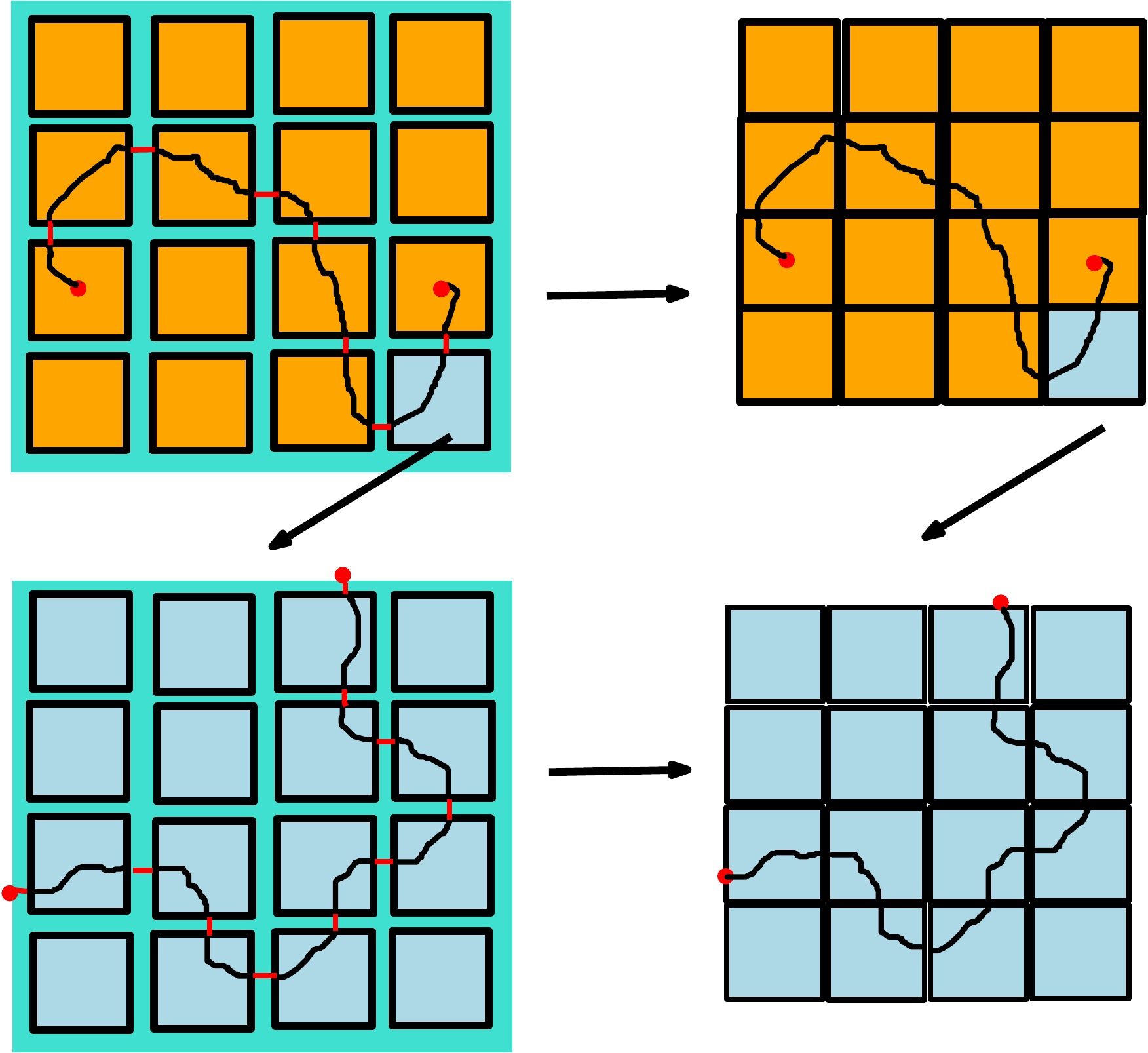}
\caption{The figure illustrates a natural identification between $\fb_{0}$ and $\Bb(\sC n_1).$ The top two figures show the effect of ignoring $\An^{\rm{ext}}(j_1, \fn_0)$. The bottom two figures zoom in the on the south-east tile and shows the effect locally of ignoring $\An^{\rm{int}}(j_1, \fn_0)$.}
\label{fig10}
\end{figure}

\begin{lem} \label{decomp121} For any $\cP=\alpha_0\chi_0\alpha_1\chi_1\alpha_2\chi_2\ldots$  satisfying the properties listed in Lemma \ref{decomp120}, for each $\alpha_i, i\ge 1$  there is a regular path  $\beta_i$ with the same starting and ending points as $\alpha_i$ and a decomposition into excursions 
$\beta_i=\beta_{i,1}\chi_{i,1}\beta_{i,2}\chi_{i,2}\ldots$
such that 
\begin{enumerate}[i.]
\item All the excursions of $\beta_i$ are large. 
\item On $\Ba^{\rm{int}}(\fn_0,j_1)$, we have $|\alpha_i|\ge (1-O(\e_7+\e_6))|\beta_i|.$
\end{enumerate}
\end{lem}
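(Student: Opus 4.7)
The plan is to repeat the argument used to prove Lemma \ref{decomp120} one scale below. The outer argument decomposed a path in $\fb_0$ using the tiling of the box by the $\Ub^{*}_{\fn_1}(j_1,v)$'s separated by the exterior corridor $\An^{\rm{ext}}(j_1,\fn_0)$. Here we instead work inside the single tile $\Ub^{*}_{\fn_1}(j_1,v_i)$ that contains $\alpha_i$, which is itself naturally tiled by the boxes $\bo{B}_{\fn_4}(j_1,v_i,w)$ for $w\in\llbracket 1,\frac{n_1}{n}\rrbracket^2$ separated by the interior corridor annuli $\bo{C}_{\fn_4}(j_1,v_i,w)\subset \An^{\rm{int}}(j_1,\fn_0)$. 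Since by hypothesis on $\Ba^{\rm{int}}(\fn_0,j_1)$ the passage times on every edge of $\An^{\rm{int}}(j_1,\fn_0)$ lie in $[b-\e_7,b]$, the corridor plays exactly the role that $\An^{\rm{ext}}(j_1,\fn_0)$ did in the proof of Lemma \ref{decomp120}.

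The first step is to write the unique decomposition of $\alpha_i$ into excursions and corridor-segments adapted to this finer tiling: $\alpha_i=\tilde{\alpha}_0\tilde{\chi}_0\tilde{\alpha}_1\tilde{\chi}_1\cdots\tilde{\alpha}_{L'}\tilde{\chi}_{L'}\tilde{\alpha}_{L'+1}$, where each $\tilde{\alpha}_k$ lies inside some $\bo{B}_{\fn_4}(j_1,v_i,w_k)$ and each $\tilde{\chi}_k$ lies inside $\An^{\rm{int}}(j_1,\fn_0)$. Next, I would replace each corridor piece $\tilde{\chi}_k$ by a regular path with the same endpoints using the direct analogue of Lemma \ref{l:outside}: concatenate a horizontal and a vertical $1$-norm-minimizing leg, and decompose it into segments either entirely inside some $\bo{B}_{\fn_4}$ (replaced by a shortest such path) or entirely inside the corridor (kept as is); the comparison $|\tilde{\chi}_k|\ge (1-O(\e_7))|\cP_{\tilde{\chi}_k}|$ goes through verbatim because $\Ba^{\rm{int}}(\fn_0,j_1)$ gives a lower bound of $b-\e_7$ on each edge of $\tilde{\chi}_k$, while each $\cP_{\tilde{\chi}_k}$ has edge weights trivially bounded by $b$.

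The second step is to invoke the analogue of Lemma \ref{l:excursion} with $\fn_0$ replaced by $\fn_4$ and with ``large'' and ``regular'' interpreted at the finer scale. For any non-large excursion $\tilde{\alpha}_k$ inside some $\bo{B}_{\fn_4}(j_1,v_i,w_k)$, we select a vertex $\bz_k$ at distance in $(\e_6^2\frac{\fn_4}{2^{j_1}},2\e_6^2\frac{\fn_4}{2^{j_1}})$ from both its endpoints and replace $\tilde{\alpha}_k$ by the concatenation of shortest paths through $\bz_k$ inside $\bo{B}_{\fn_4}(j_1,v_i,w_k)$; the new excursion is automatically large, and the cost of the swap is absorbed in $(1-O(\e_6))^{-1}$ by comparing against the corridor segment $\tilde{\chi}_k$, which, on $\Ba^{\rm{int}}(\fn_0,j_1)$, has length at least $(b-\e_7)\e_6 \frac{\fn_4}{2^{j_1}}$ thanks to regularity at the finer scale. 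Concatenating the resulting excursions yields $\beta_i$ with the same endpoints as $\alpha_i$; it is regular by construction, all its excursions are large, and combining the two multiplicative losses gives $|\alpha_i|\ge(1-O(\e_7+\e_6))|\beta_i|$ on $\Ba^{\rm{int}}(\fn_0,j_1)$.

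The only genuinely new point, as opposed to a verbatim rerun, is verifying that the corner vertices $\bz_k$ used to enforce largeness and the axis-aligned corridor traversals used to enforce regularity can both be realized inside $\bo{B}_{\fn_4}(j_1,v_i,w_k)$ when $\alpha_i$ has its endpoints on the boundary of $\Ub^{*}_{\fn_1}(j_1,v_i)$; since $\alpha_i$ itself is a large excursion in the outer decomposition (by property (i) of Lemma \ref{decomp120}), in particular $i\ge 1$ ensures $\alpha_i$ traverses strictly more than one $\bo{B}_{\fn_4}$-box, so at least one corridor segment is present and the construction has room to operate. I expect this geometric bookkeeping to be the only place where care is needed; the metric estimates are identical to those in Lemmas \ref{l:outside} and \ref{l:excursion} after a formal substitution of parameters, which is why the authors choose to omit the proof.
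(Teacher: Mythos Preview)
Your proposal is correct and follows exactly the approach the paper intends: the authors explicitly write that they ``use essentially the same arguments on each of the excursions $\alpha_i$ as in the proof of Lemma \ref{decomp120}'' with the adapted definitions of \textbf{regular} and \textbf{large}, and then omit the proof. Your two-step rerun of Lemmas \ref{l:outside} and \ref{l:excursion} at the finer $\bo{B}_{\fn_4}$-scale, using $\Ba^{\rm{int}}(\fn_0,j_1)$ in place of $\Ba^{\rm{ext}}(\fn_0,j_1)$, is precisely that argument; the only caveat is that your final paragraph's worry about $\alpha_i$ traversing multiple $\bo{B}_{\fn_4}$-boxes is slightly misplaced---the endpoints of $\alpha_i$ (for $i\ge 1$) lie on $\partial\Ub^*_{\fn_1}(j_1,v_i)$, which by construction sits inside the interior corridor annuli, so corridor segments are automatically present and the bookkeeping is even simpler than you suggest.
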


Equipped with these results we are now ready to prove \eqref{scale879}.
In the next few lemmas we create a scaled version $\cP^{\Sc}$ of path $\cP$ mentioned in the  above lemma. What we do is rather simple and natural.  Consider the 
$\cP=\alpha_0\chi_0\alpha_1\chi_1\alpha_2\chi_2\ldots,$ and then using Lemma \ref{decomp121} let $\cP'=\beta_1\chi_1\beta_2\chi_2\ldots $
and moreover consider the decomposition of each $\beta_i$ as provided by the last lemma. 

Now by the regularity of the path $\cP$ and all the $\beta_i$'s there is a natural path one can form in $\Bb(\sC n_1)$ by squishing all the corridors. 
Formally one can naturally identify $$\fb_0 \setminus (\An^{\rm{ext}}(j_1, \fn_0) \cup \An^{\rm{int}}(j_1, \fn_0))$$ with $\Bb(\sC n_1)$.  This allows one to identify with the path $\cP',$ a path $\cP^*$ in $\Bb(\sC n_1)$ formed by ignoring all the bridges $\chi_{i,j}$ and $\chi_i$. This is possible since in the above identification of $\fb_0 \setminus \left\{\An^{\rm{ext}}(j_1, \fn_0) \cup \An^{\rm{int}}(j_1, \fn_0)\right\}$ with $\Bb(\sC n_1)$ the endpoints of $\chi_{i,j}$ or $\chi$ map to adjacent points in $\Bb(\sC n_1)$ (see Figure \ref{fig10}). 

Under the above operation $\cP^*$ admits a decomposition $\cP^*=\cP_1\cP_2\ldots$ where  $\cP_i$ belongs to  $\Ub_{\sC n_1}(j_1,v_i)$ where $v_i \in \llbracket 1, 2^{j_1}\rrbracket^2$ is such that $\alpha_i\subset \Ub^*_{\fn_1}(j_1,v_i).$ 
Let the starting and ending points of $\cP_i$ be $x_i$ and $y_i$.
Recalling $\ell_2, j_1$ from the definition of $\Bas,$ let 
  $x^{\Sc}_i$ be the closest point in $\Gr_{\fn_4}(\ell_2;j_1)$ to $\frac{n}{n_{1}}x_{i}$ and similarly let $y^{\Sc}_i$ be the closest point in $\Gr_{\fn_4}(\ell_2;j_1)$ to $\frac{n}{n_{1}}y_{i}$ (see Figure \ref{fig11}).

Since $x_{i+1}$ and $y_i$ are adjacent it follows that $x^{\Sc}_{i+1}$ and $y^{\Sc}_i$ are adjacent points in $\Gr_{\fn_4}(\ell_2;j_1)$. 
Now a natural candidate for $\alpha^{\Sc}$ in \eqref{scale879} would be to take the shortest path passing through the points $x^{\Sc}_{1},y^{\Sc}_{1},x^{\Sc}_{2},y^{\Sc}_{2},\ldots$. However  it is a little inconvenient notationally since  $x^{\Sc}_{i+1}$ and $y^{\Sc}_i$ are not necessarily adjacent in $\fb_{4}.$ Since $\ell_2$ by our choice of parameters will be much smaller than the distance between $x^{\Sc}_{i}$ and $x^{\Sc}_{i+1}$  we will in fact ignore $y^{\Sc}_{i}$ and define $\tilde x^{\Sc}_{i}$ to be the point adjacent to $x^{\Sc}_{i+1}$ contained in the tile containing $\tilde x^{\Sc}_{i}$ (namely $\Ub_{\fn_4}(j_1,v_i)$).

\begin{figure}[h]
\centering
\includegraphics[scale=.9]{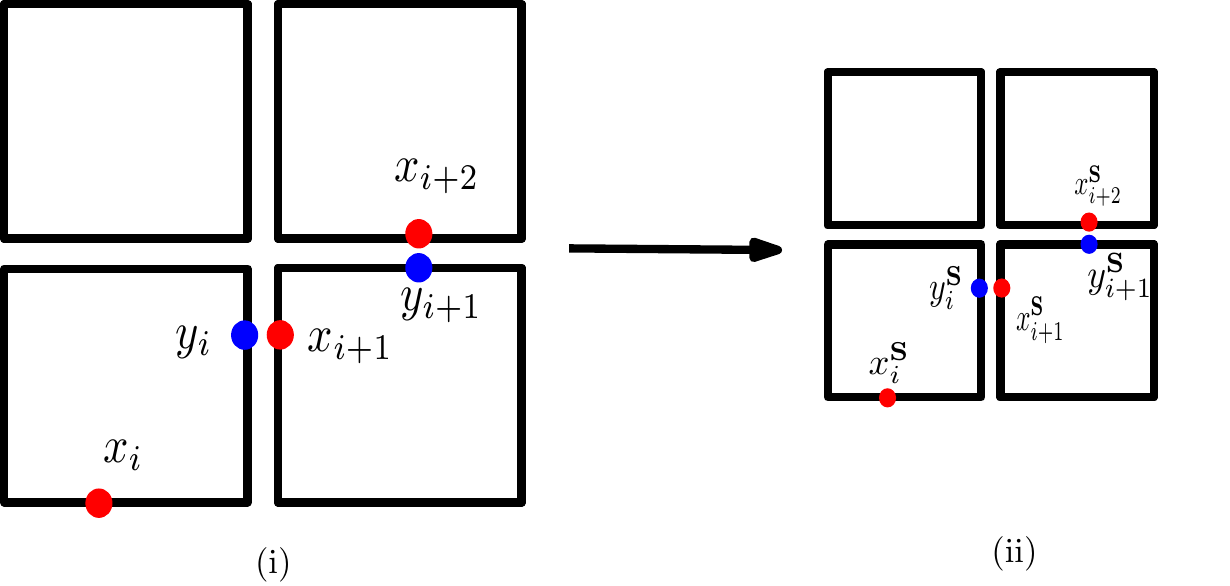}
\caption{(i) illustrates the  $x_i$ and $y_i$ denoted by red and blue colors respectively and (ii)  the corresponding scaled picture.}
\label{fig11}
\end{figure}

Given the above we now let $\alpha^{\Sc}_i$ to be the shortest path between $ x^{\Sc}_{i}$ and $\tilde x^{\Sc}_{i}$
 and define $\alpha^{\Sc}$ to be the concatenation of $\alpha^{\Sc}_i$ thought of as a sequence of vertices.  This is a valid construction as by the above discussion the endpoint of $\alpha^{\Sc}_i$ ($\tilde x^{\Sc}_{i}$) is adjacent to the starting point of $\alpha^{\Sc}_{i+1}$ ($x^{\Sc}_{i+1}$).
  
As a consequence of the above lemma and the approximate convexity statement in Proposition \ref{conc1} we have the following key result. 
\begin{lem}\label{lb123} Given any $\e_{11}>0,$  there exists a choice of the parameters in the definition of $\Bas$ and $\Fav$ such that,  deterministically for any $i$. 
$$|\alpha_i|\ge (1-\e_{11})\frac{n_1}{n}|\alpha^{\Sc}_i|,$$
where the LHS is computed on the event $\Fav$ and the RHS is  computed on any environment in $\Bas.$ \footnote{Note that the points $x_{i}^{\Sc}$ are points in $\Gr_{\fn_4}(\ell_2;j_1)$ and hence as $\Pi$ varies over $\Bas$, the length of the path $\alpha^{\Sc}_i$ can at worst change by a multiplicative factor $(1+\eta_1)$ where $\eta_1$ appears in the definition of $\Bas$.}
\end{lem}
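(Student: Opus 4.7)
The plan is to compare $\alpha_i$ in the $\Fav$ environment with $\alpha_i^{\Sc}$ in the $\Bas$ environment separately for each tile index $i$, splitting according to whether $v_i \in A$ or $v_i \notin A$. In both regimes the dilation factor $n_1/n$ is forced by the geometry of the construction, while the multiplicative $(1-\e_{11})$ loss is absorbed by tuning the small parameters $\eta_1,\delta_1,\e_7,\e_6$ and the large parameter $m_1$.

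For the main case $v_i\notin A$, I would first reapply the decomposition of Lemma \ref{decomp121} inside $\alpha_i$, writing it as a concatenation of large regular sub-excursions $\alpha_{i,1},\alpha_{i,2},\ldots$, each supported in some $\bo{B}_{\fn_4}(j_1,v_i,w_k)$, together with internal bridges that cost only a multiplicative factor $(1-O(\e_6+\e_7))$ on $\Ba^{\rm int}(\fn_0,j_1)$. By the construction of $\Di$, the edge weights inside $\bo{B}_{\fn_4}(j_1,v_i,w_k)$ agree exactly with the edge weights of $\Pi_{w_k}\in\Bas$ restricted to $\Ub_{\fn_4}(j_1,v_i)$. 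Since $v_i\notin A$, that latter tile is $(\delta_1,\ell_1,k_1)$-$\SST$, so by Lemma \ref{compare1} together with Definition \ref{tilegrad2proj} the passage time of $\alpha_{i,k}$ satisfies
\[
|\alpha_{i,k}|\;\geq\;\bigl(1-O(\delta_1+\eta_1+2^{-m_1/4})\bigr)\,\|\bo{w}_{i,k}\|_{(j_1,v_i)},
\]
where $\bo{w}_{i,k}$ is the displacement vector from the starting to the ending point of $\alpha_{i,k}$.

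Next I would sum over $k$ and invoke the approximate convexity of $\|\cdot\|_{(j_1,v_i)}$ from Proposition \ref{conc1}. Since the internal bridges are short (length $O(\e_6 \fn_4/2^{j_1})$) and the sub-excursions are large, the telescoping sum $\sum_k \bo{w}_{i,k}$ agrees, up to a negligible $O(\ell_2)$ lattice shift, with the total displacement of $\alpha_i$, which in turn equals $\tfrac{n_1}{n}(\tilde x_i^{\Sc}-x_i^{\Sc})$ by construction of the scaled endpoints. Proposition \ref{conc1} then yields
\[
\sum_k \|\bo{w}_{i,k}\|_{(j_1,v_i)}\;\geq\;\frac{1}{1+O(\delta_1+2^{-m_1/16})}\,\Bigl\|\tfrac{n_1}{n}\bigl(\tilde x_i^{\Sc}-x_i^{\Sc}\bigr)\Bigr\|_{(j_1,v_i)}\;=\;\frac{n_1}{n}\cdot\frac{|\alpha_i^{\Sc}|}{1+O(\delta_1+\eta_1+2^{-m_1/16})},
\]
where the last equality again uses Lemma \ref{compare1} applied in $\fb_4$: $\alpha_i^{\Sc}$ is a shortest lattice path joining two grid points of $\Gr_{\fn_4}(\ell_2;j_1)$ inside a $\SST$ tile, so its length realizes the gradient norm of its displacement up to the stated factor (the $\eta_1$ arising from the projection $\Pro$, as flagged in the footnote to the lemma). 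Chaining the two estimates, together with the $(1-O(\e_6+\e_7))$ factor from the internal bridges, gives $|\alpha_i|\geq(1-\e_{11})\tfrac{n_1}{n}|\alpha_i^{\Sc}|$ provided the parameters are tuned so all error terms multiply to at most $\e_{11}$.

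For $v_i\in A$, the sub-tiles are covered by $\Boo$, so every edge of $\alpha_i$ has weight at least $b-\e_7$; hence $|\alpha_i|\geq (b-\e_7)L_i$ for $L_i$ the number of edges of $\alpha_i$. On the other hand $|\alpha_i^{\Sc}|\leq b L_i^{\Sc}$ since edges are bounded by $b$, and the dilation geometry combined with the largeness of the sub-excursions forces $L_i\geq (1-o(1))\tfrac{n_1}{n}L_i^{\Sc}$, the $o(1)$ absorbing the $O(\ell_2)$ grid rounding at the endpoints. Choosing $\e_7$ small enough that $(b-\e_7)/b\geq 1-\e_{11}$ concludes this case. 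The main obstacle I anticipate is the bookkeeping: ensuring that the hierarchy $\eta_1\ll\delta_1,\e_7\ll 2^{-m_1}\ll\e_1\ll\e_6\ll\e_{11}$ can be arranged consistently, that the bridges are short enough to make the telescoping exact up to lattice error, and that the grid rounding in the definition of $\alpha_i^{\Sc}$ contributes only the $(1+\eta_1)$ slack alluded to in the footnote of the lemma. These are routine but delicate verifications that have to be performed in the order dictated by Lemmas \ref{most34}, \ref{compare1} and Proposition \ref{conc1}.
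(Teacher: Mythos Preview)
Your overall strategy matches the paper's, but there are two genuine gaps.

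\textbf{Case $v_i\notin A$.} Your key inequality $|\alpha_{i,k}|\geq(1-O(\delta_1+\eta_1+2^{-m_1/4}))\|\bo{w}_{i,k}\|_{(j_1,v_i)}$ relies on the gradient approximation, which is only available when the displacement $|\bo{w}_{i,k}|$ is at least on the scale $\ell_1$ (cf.\ the restriction on $d$ in \eqref{grad89} and Definition \ref{tilegrad2proj}). But ``large'' for the sub-excursion $\alpha_{i,k}$ only guarantees a point $a_2$ on the path far from \emph{both} endpoints; it does not force the endpoints themselves to be far apart, so $|\bo{w}_{i,k}|$ can be arbitrarily small. The paper fixes this by splitting each sub-excursion at the witness point $a_2$, producing two vectors $\vec w^{1}_{i,k}=a_2-a_0$ and $\vec w^{2}_{i,k}=a_1-a_2$, each of Euclidean length at least $\e_6^2\fn_4/2^{j_1}$, for which the $(1-o(1))$ comparison to the tile norm is valid. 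The bridge displacements $\vec w'_{i,k}$ are also kept in the sum (they too have length $\geq \e_6\fn_4/2^{j_1}$), and only then does $\sum_k(\vec w^1_{i,k}+\vec w^2_{i,k}+\vec w'_{i,k})$ telescope to $\vec\alpha_i$. Your claim that the bridges can be dropped and the telescoping error is $O(\ell_2)$ is not correct: each bridge contributes a displacement of order $\e_6\fn_4/2^{j_1}$ and the number of bridges is not a priori bounded.

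\textbf{Case $v_i\in A$.} The relation $L_i\geq(1-o(1))\frac{n_1}{n}L_i^{\Sc}$ between edge counts has no reason to hold: $L_i$ depends on how much $\alpha_i$ winds and $L_i^{\Sc}$ on how much the geodesic winds, and neither is controlled by the other through dilation. The paper instead argues with the $\ell_1$ norm of the displacement vector: $|\alpha_i|\geq(b-\e_7)\|\vec\alpha_i\|_1=(b-\e_7)\frac{n_1}{n}\|\vec\alpha_i^{\Sc}\|_1$ (up to rounding), and then $|\alpha_i^{\Sc}|\leq b\|\vec\alpha_i^{\Sc}\|_1$ by bounding the geodesic by the $\ell_1$-minimizing path. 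This is what makes the scaling exact.
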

Before proving the above lemma we finish the proof of Lemma \ref{verify} using the above results.
\begin{proof}[Proof of Lemma \ref{verify}]
The proof will clearly follow by showing \eqref{part1} and \eqref{part2}. 
We will show only the former and the latter has an identical proof. 
Fix any path $\gamma$ from $\bo{0}$ to $\bo{n_1}$ in $\fb_{0}.$
Now applying Lemma \ref{decomp120}, we obtain a path $\cP=\alpha_1\chi_1\alpha_2\chi_2\ldots$
and by the previous result

\begin{align}\label{inequality}
|\gamma|\ge (1-O(\e_7+\e_6))|\cP|&\ge (1-O(\e_7+\e_6))\sum_{i}|\alpha_i|\ge (1-O(\e_7+\e_6))(1-\e_{11})\frac{n_1}{n}\sum_i |\alpha^{\Sc}_i|,\\
&=(1-O(\e_7+\e_6))(1-\e_{11})\frac{n_1}{n}|\alpha^{\Sc}|,\end{align}
where $|\cP|$ is  computed on $\Fav$ and $|\alpha^{\Sc}|$\footnote{Note that to be completely precise there is an edge joining $\alpha_i^{\Sc}$ and  $\alpha_{i+1}^{\Sc}$ which we are ignoring in \eqref{inequality} for brevity since it is easily seen that such edges only have a negligible contribution.} is computed on any environment in $\Bas.$
Now by definition, $\alpha^{\Sc}$ is a path joining $\bo{0}$ and $\bo{n}(1-O(\e_6))$ in $\fb_4$ and hence on $\Bas$ we have $|\alpha^{\Sc}|\ge (\mu+\zeta)n(1-O(\e_6))$ and thus we are done by choosing $\e_6,\e_7$ and $\e_{11}$ small enough depending on $\e_9$. 
\end{proof}
 
We now prove  Lemma \ref{lb123} using Lemmas \ref{decomp120} and \ref{decomp121} and Proposition \ref{conc1}.

\begin{proof}[Proof of Lemma \ref{lb123}] Recall the set of unstable tiles $A$ in the definition of $\Bas.$
For the proof let us consider an environment $\Pi_1 \in \Bas.$ Let us obtain an altered environment $\Pi_*$ which agrees with $\Pi_1$ on $\Ub_{\fn_4}(j_1,v)$ for any $v\in \llbracket 1, 2^{j_1}\rrbracket^2\setminus A$ and is $b-\e_7$ ($\e_7$ appearing in the definition of the barrier and boosting events.) on the edges in the tiles corresponding to $v\in A.$
Clearly the length of the shortest path between $\bo{0}$ and $\bo{n}$ in $\Pi_*$ is at least $(1-O(\e_7))$ times the length of shortest path between $\bo{0}$ and $\bo{n}$ in $\Pi_1$ since pointwise for any edge $e$, $\Pi_*(e)\ge (1-\frac{\e_7}{b})\Pi_1(e).$  

Fix any $i.$ Note that by Lemma \ref{decomp121} all the $\beta_{i,j}$ are large which means that there is a point (say $a_2$) which is far apart from the end points $a_0$ and $a_1$.  Let $\overset{\rightarrow}{w}^{1}_{i,j}$ and $\overset{\rightarrow}{w}^{2}_{i,j}$ be the vectors obtained by taking the difference of  
$a_2-a_0$ and $a_1-a_2$ respectively.
Also let $\overset{\rightarrow}{w}'_{i,j}$ be the vector obtained by taking the difference of  of the starting and ending points of $\chi_{i,j}$.
Note that by the properties listed in Lemma \ref{decomp121}, it follows that for all $i,j,$ 
\begin{equation}\label{lbnorm}
\min (\|\overset{\rightarrow}{w}^{1}_{i,j}\|_2, \|\overset{\rightarrow}{w}^{2}_{i,j}\|_2, \|\overset{\rightarrow}{w}'_{i,j}\|_2)\ge \e^2_6 \frac{\fn_4}{2^{j_1}}.
\end{equation}
where $\|\cdot\|_2$ denotes the usual Euclidean norm. 
Note that the bound on $ \|\overset{\rightarrow}{w}'_{i,j}\|_2$ follows since $\chi_{i,j}$ is a bridge across the barriers of width $\e_6 \frac{\fn_4}{2^{j_1}}.$ 
Now recall that for any $\Pi \in \Bas,$ and all $v\in \llbracket 1, 2^{j_1}\rrbracket^2\setminus A,$  $\Ub_{\fn_4}(j_1,v)$ is  stable with a certain choice of parameters,
 and moreover recall the approximate norm $\|\cdot\|_{({j_1},v_i)}$ from the statement in Proposition \ref{conc1} where $\alpha_i\in \Ub_{\fn_0}({j_1},v_i)$ in case $v_i\notin A.$ 
Now the following argument is split into two cases:

\nin
$\bo{(1)}$ $v_i \notin A.$ In this case the proof is now complete by  the following string of inequalities ($\e'$ below changes from line to line). 
\begin{align*}
|\alpha_i| &\ge (1-O(\e_7+\e_6))\sum_{j}\left(|\beta_{i,j}|+|\chi_{i,j}|\right) \ge (1-\e')\sum_{j}\left(\norm{\overset{\rightarrow}{w}^{1}_{i,j}}_{({j_1},v_i)}+\norm{\overset{\rightarrow}{w}^{2}_{i,j}}_{({j_1},v_i)}+\norm{\overset{\rightarrow}{w}'_{i,j}}_{({j_1},v_i)}\right)\\
& \ge (1-\e')\norm{\sum_{j}\left(\overset{\rightarrow}{w}^1_{i,j}+\overset{\rightarrow}{w}^2_{i,j}+\overset{\rightarrow}{w}'_{i,j}\right)}_{({j_1},v_i)}\ge (1-\e')\norm{\overset{\rightarrow}{\alpha}_i}_{({j_1},v_i)}= (1-\e')\frac{n_1}{n}\norm{\overset{\rightarrow}{\alpha}^{\Sc}_i}_{({j_1},v_i)} \\
& \ge (1-\e_{10})\frac{n_1}{n}|\alpha^{\Sc}|
\end{align*}
where $\overset{\rightarrow}{\alpha}_i$  (resp. $\overset{\rightarrow}{\alpha}^{\Sc}_i$) is the vector obtained by joining the end points of the path $\alpha_i$ (resp. $\alpha^{\Sc}_i$)  and $\e_{10}$ can be made arbitrarily small by choosing the parameters appropriately. 
The first inequality follows from Lemma \ref{decomp121}. Now note that the second inequality follows from  the definition of the approximate norm $\|\cdot\|_{(j_1,v_i)},$ along with the fact that $\Ub_{\fn_4}(j_1,v_i)$ is  stable and most importantly the lower bound on the euclidean norms of the vectors in \eqref{lbnorm}. The last fact is needed crucially since recall that $(\delta_1,\ell_1,k_1)-\SST$  allows us to relate the passage time to the norm only for pairs of points which are at a distance $\ell_1$ or more apart (see for e.g. \eqref{grad89} and Definition \ref{tilegrad2proj}).  The third inequality is the content of Proposition \ref{conc1} and the fourth inequality again follows from the definition of  $\|\cdot\|_{(j_1,v_i)}.$ Again as above the last inequality relating the passage time to $\|\cdot\|_{(j_1,v_i)}$ follows since $|\overset{\rightarrow}{\alpha}^{\Sc}_i|$ is large enough by choice. 

\nin
$\bo{(2)}$ $v_i \in A.$ In this case clearly $$|\alpha_i|\ge (b-\e_7)\|\overset{\rightarrow}{\alpha}_i\|_1 \ge (b-\e_7)\frac{n_1}{n}\|\overset{\rightarrow}{\alpha}^{\Sc}_i\|_1\ge \frac{n_1}{n} (1-O(\e_7))|\alpha^{\Sc}_i|$$

where the last inequality follows from the discussion at the beginning of the proof and $\|\overset{\rightarrow}{\alpha}_i\|_1$ denotes the $1-$norm of the vector $\overset{\rightarrow}{\alpha}_i.$
\end{proof}

The next three sections prove the three key technical results, Proposition \ref{t:stable}, \ref{conc1}  and \ref{p:cont} regarding stability, approximate convexity of the distance function as well as continuity of the rate function. We start with the continuity result.

\section{Continuity of the rate function} 
\label{s:cont} In this section we prove Proposition \ref{p:cont}. Recall that the statement says 
that for each $\e>0$, there exists $\e'>0$ such that for all $n$ sufficiently large we have 
$$ \frac{\log \P(\sU_{\zeta-\e'}(n))}{n^2} \leq \frac{\log \P(\sU_{\zeta}(n))}{n^2}+\e.$$
This is where the assumption of continuous density of the edge distribution will simplify the proof significantly.
Moreover,
to avoid introducing new notation, we will use several letters in this section which has been used earlier to denote different quantities. However this section will be completely self contained and hence this should not create any confusion or conflict.

The basic approach is simply to start with an environment $\Pi \in \sU^*_{\zeta-\e'}$ and then increase the weight of `all' the edges slightly to construct an environment $\Pi' \in \sU^*_{\zeta}.$
However a technical issue arises since we have assumed the variables are bounded by a constant $b>0$. Hence the variables  in $\Pi$ which are very close to $b$ cannot be increased. 
Thus the first step is to localize the set of such really high valued edges. In fact we will also localize the set of edges which takes values where the density $f_{\nu}$ is close to zero. To carry this out, for any $\e_1,$ let $\e_2$ be such that $\P(X_{e}\in[b-\e_2,b])\le \e_1$ and moreover we will choose $\e_2$ such that  there exists $\e_3>0$ such that $$\inf\{f_{\nu}(x): x\in[b-\e_2,b-\e_2+\e_3]\}\ge \e_3.$$
Now let $\bo{B}=\{x \in [0,b-\e_2]: f_{\nu}(x)\le \frac{\e^3_3}{b}\}.$ Thus by definition $\nu(\bo{B}) \leq \e_3^3.$ 
Now for any $n,$ recall the notation $\fb_4=\Bb(\sC n)$ from \eqref{abbre34}. We will work with the event $\sU^*_{\zeta-\e'}(n)$  which is a function of the edges on $\Bb(4\sC n)$. However recall that on  $\sU^*_{\zeta-\e'}(n)$ any path from $\bo{0}$ which exited $\fb_4$ has length bigger than $bn$ thus it would suffice to increase the value of the  edges only inside $\fb_4.$%

Let $\bo{H}_1=\{e \in \fb_4 :X_e \in [b-\e_2,b]\}.$ 
Now  by a straightforward union bound over all possible choices  of $\bo{H}_1$ (at most $2^{O(n^2)}$), for any $\e_4>0,$
\begin{align}\label{localize23}
\P(|\bo{H}_1|\ge \e_4n^2)\le 2^{O(n^2)}\e_1^{\e_4 n^2}&= e^{O(n^2)+\e_4\log(\frac{1}{\e_1})n^2} \text{and hence, }\\
\nonumber
&=o\left(\P(\sU^*_{\zeta})\right) \text{ for all small enough } \e_1,\\
\nonumber
&= o\left(\P(\sU^*_{\zeta-\e'})\right) \text{ for all }\e'>0. 
\end{align}
Similarly letting $\bo{H}_2=\{e \in \fb_4 :X_e \in \bo{B}\}$ we get \begin{align}\label{localize24}
\P(|\bo{H}_2|\ge \e_4n^2)\le 2^{O(n^2)}\e_3^{3 \e_4 n^2}&= e^{O(n^2)+3 \e_4\log(\frac{1}{\e_3})n^2} \text{and hence, }\\
\nonumber
&=o\left(\P(\sU^*_{\zeta})\right) \text{ for all small enough } \e_3,\\
\nonumber
&= o\left(\P(\sU^*_{\zeta-\e'})\right) \text{ for all }\e'>0. 
\end{align}

The above allows us to localize $\bo{H}_1$ and $\bo{H}_2,$ without paying too much in the probability. 
Formally fix some $\e'>0$ (whose value would be specified later).
Observe that the total number of subsets of $\fb_4$ of size at most $\e_4n^2$ is at most $2^{O(\e_5)n^2}$ where $\e_5=-\left[\e_4\log(\e_4)+(1-\e_4)\log(1-\e_4)\right]$ goes to zero  as $\e_4$ goes to zero. 
From the above discussion it follows that for any $\e_4,$ by choosing $\e_1,\e_3$ small enough we have $$\P(\{|\bo{H}_1|\le \e_4 n^2\}\cap \{|\bo{H}_2|\le \e_4 n^2\} \cap \sU^*_{\zeta-\e'})\ge \P(\sU_{\zeta-\e'})(1-o(1)).$$  Thus by pigeon-hole principle it follows that there exists subset  $A_1,A_2$ of $\fb_4$ each of size at most $\e_4 n^2$ such that
\begin{equation}\label{probbound6754}
\P(\{\bo{H}_1=A_1\} \cap \{\bo{H}_2=A_2\} \cap \sU^*_{\zeta-\e'})\ge \P({\sU_{\zeta-\e'}})e^{-O(\e_5n^2)}.
\end{equation}

For easy referencing let us call the event $\{\bo{H}_1=A_1\} \cap \{\bo{H}_2=A_2\} \cap \sU^*_{\zeta-\e'}$ as $\bo{C}.$
We will also use the following consequence of uniform continuity of $f_{\nu}$ on $[0,b]$, and the fact that $\overline{\bo{D}},$ where $\bo{D}=[0,b]\setminus \{\bo{B}\cup [b-\e_2,b]\}$ is compact and more importantly $f_{\nu}$ is uniformly away from zero (at least $\frac{\e^3_3}{b}$) on the former: Given any $\e_6$ there exists $\e_7$ such that for any $x\in \overline{\bo{D}}$ such that 
\begin{equation}\label{RN}
\frac{1}{1+\e_6}\le \frac{f_{\nu}(x+\e_7)}{f_{\nu}(x)}.
\end{equation}
Now let us modify the event $\bo{C}$ to get an event $\bo{C}_{1}$ which will posses the property that $\log(\P(\bo{C}_1))-\log(\P(\bo{C}))=o(n^2)$   and most importantly $\bo{C}_1\subset \sU_{\zeta}.$
Formally for any $\Pi \in \bo{C}$ noting that by definition $A_1$ and $A_2$ are disjoint,
\begin{align*}
\bo{C}_1(\Pi)=&\{ \Pi': \Pi'(e)=\Pi(e)\,\, \forall e \in A_1,\\
& \Pi'(e)\in [b-\e_2+\frac{\e_3}{2},b-\e_2+\e_3]\,\, \forall e \in A_2,\\ 
& \Pi'(e)=\Pi(e)+\e_7 \,\, \forall e\in \fb_4\setminus A_1\cup A_2 
\}.
\end{align*}
Let 
$\displaystyle{\bo{C}_1=\bigcup_{\Pi\in \bo{C}} \bo{C}_1(\Pi)}.$
We now compute $\P(\bo{C}_1).$ For any $\Pi,$ and subset $B$ of edges in $\fb_{4},$ it would be convenient to let $\Pi|_{B}$ be the restriction of $\Pi$ on the edges in $B$; for any  event $\bo{E}$ let $\bo{E}(B)=\{\Pi|_{B}:\Pi \in \bo{E}\}$; 
 and let $\displaystyle{f_{\nu}(\Pi|_B)):=\prod_{e\in B}}f_{\nu}(\Pi(e))$ (in case $B=\fb_4$ we would omit the above notations.). Thus  
 \begin{equation}\label{prob98534}
 \P(\bo{C})=\int_{\bo{C}} f_{\nu}(\Pi) \,\,\mathrm{d}\Pi \le  \e_3^{3|A_2|} \int_{\bo{C}(\fb_4\setminus A_2)} f_{\nu}(\Pi|_{\fb_4\setminus A_2})\,\,\mathrm{d}\Pi|_{\fb_4\setminus A_2},
 \end{equation}
  where the second inequality follows from the definition of $A_2.$
Note that by definition $\bo{C}_1(\fb_4\setminus A_2)=\bo{C}(\fb_4\setminus A_2)+\bo{v},$
where
$$
\bo{v}(e)=\left\{\begin{array}{cc}
0 & \text{ if } e \in A_1 \\
\e_7 & \text{ if } e \in \fb_4\setminus \{A_1\cup A_2\}
\end{array}\right..
$$ 
Now observe that
\begin{align}
\label{lbc1}
\P(\bo{C}_1)&\ge {\left(\frac{\e_3}{2}\right)^{2|A_2|}} \int_{\bo{C}_1(\fb_4\setminus A_2)} f_{\nu}(\Pi|_{\fb_4\setminus A_2}),\\
\nonumber
&\ge  \left(\frac{\e_3}{2}\right)^{2|A_2|} \left(\frac{1}{1+\e_6}\right)^{|\fb_4|} \int_{\bo{C}(\fb_4\setminus A_2)} f_{\nu}(\Pi|_{\fb_4\setminus A_2})\\
&\ge e^{-O(\e_6)n^2}\P(\bo{C}),
\end{align}
where the first inequality follows from the definition of $\bo{C}_1$, the second inequality is by \eqref{RN} and the final equality is by \eqref{prob98534} by choosing  $\e_3$ and $\e_7$ small enough.  Thus by \eqref{probbound6754},
the proof will now be complete once we show that $\bo{C}_1 \subset \sU_{\zeta}.$
To do this note that for any $\Pi'\in \bo{C}_1$ there exists $\Pi \in \bo{C}$ such that $\Pi'(e)\ge \Pi(e)+\min (\frac{\e_3}{2},\e_7)$ for all $\e \in \fb_4 \setminus A_1$ and $\Pi'(e)=\Pi(e)$ for $e\in A_1.$ 

Note that since $\Pi \in \sU^{*}_{\zeta-\e'}$,  any path $\sG$ starting from the origin, which exits $\fb_4$ has weight at least $bn$ in $\Pi$ and hence by the above discussion also in $\Pi'$.

Thus to prove the lemma we only consider the path 
 $\sG$ which is the shortest path between $\bo{0}$ and $\mathbf{n}$ lying inside $\fb_4$, in the environment $\Pi'.$ We want to show $|\sG|_{\Pi'}\ge (\mu+\zeta)n,$ where $|\sG|_{\Pi},|\sG|_{\Pi'}$ denote the weights of $\sG$ in the environments $\Pi$ and $\Pi'$ respectively.

Now since trivially $|\sG|_{\Pi'}\ge |\sG|_{\Pi},$ there is nothing to show if $|\sG|_{\Pi}>(\mu+\zeta)n.$ Assuming otherwise, it follows that $$|\sG\cap A_1|\le \frac{(\mu+\zeta)n}{b-\e_2}=cn$$ 
for some $c<1$ for all $\e_2$ small enough since $\mu+\zeta <b$ (by $\sG\cap A_1$ we denote the set of edges in $A_1$ that $\sG$ passes through). Indeed, this is true since each edge in $A_1$ has weight at least $b-\e_2.$ However note that since $\sG$ connects $\bo{0}$ and $\bo{n},$ trivially $\sG$ passes through at least $n$ edges.  Thus $|\sG\cap A_1^c|\ge (1-c)n$ and hence $|\sG|_{\Pi'}-|\sG|_{\Pi}\ge {(1-c)}n\min (\frac{\e_3}{2},\e_7)$.  By definition $|\sG|_{\Pi} \ge (\mu+\zeta-\e')n$ and hence taking $\e'=\frac{\min(\frac{\e_3}{2},\e_7){(1-c)}}{2}$  implies the sought bound $|\sG|_{\Pi'}\ge (\mu+\zeta)n.$
Thus to finish the proof phrased in terms of the parameters in the statement of Proposition \ref{p:cont}, given $\e$ we must choose $\e_4$ small enough so that in \eqref{probbound6754} the $O(\e_5)$ term is at most  $\e.$ This dictates the choice of $\e_1$ and $\e_3,$ which in turn dictates the choice of $\e_2$.  The choice of $\e_6$ and hence $\e_7$ is governed by \eqref{lbc1} which then fixes the value of $\e'$. 
\qed
\section{Approximate convexity properties}\label{pconc1}
In this section we will prove Proposition \ref{conc1}, i.e.  given $\delta_1,m_1$ and $j_1$, for any $\Ub_{\sC n}({j_1},v)$ which is $(\delta_1,\ell_1,k_1)-\SST$ where $\ell_1=\frac{n}{2^{j_1+m_1}}$ and $k=2^{2m_1}$,  and any set of vectors $\bo{w}_1,\bo{w}_2,\ldots,\bo{w}_t$,  if $\bo{w}=\sum_{i=1}^t \bo{w}_i$ then   
\begin{equation}\label{dis9876}
\|\bo{w}\|_{(j_1,v)}\le (1+\delta)\sum_{i=1}^t\|\bo{w}_i\|_{(j_1,v)}
\end{equation}
 where $\delta= O(\delta_1+{2^{-\frac{m_1}{16}}}).$
The proof essentially follows by noticing that any set of vectors as above can be scaled down to get a sum of vectors inside $\Ub_{\sC n}(j_1,v)$ followed by application of  stability and triangle inequality.
To formalize this, we need some notation:  For every $\phi\in \mathbb{S}^1(\eta_1),$ (value of $\eta_1$ will be specified later and sufficiently small)
$$
\cC(\phi)=\{\bo{b} \in \{\bo{w}_1, \bo{w}_2,\ldots, \bo{w}_t\} : {\rm{arg}}(\bo{b}) \in [\phi,\phi+\eta_1) \}
$$
where $\bo{w}_1,\bo{w}_2,\ldots \bo{w}_t$ are as in the statement of the proposition, i.e. $\cC(\phi)$ denotes the collection of vectors among $\{\bo{w}_1, \bo{w}_2,\ldots, \bo{w}_t\}$ whose angle with the $x-$axis falls in the interval $[\phi,\phi+\eta_1).$
Let $\bo{w}_{\phi}= \sum_{i=1}^t \bo{w}_i \mathbf{1}(\bo{w}_i \in \cC(\phi))$ be the sum of the vectors in $\cC(\phi).$ Thus by definition $$\sum_{\phi \in \mathbb{S}^{1}(\eta_1)}\bo{w}_{\phi}=\bo{w}.$$
Also note that for every $\bo{w}_i \in \cC(\phi),$  by Lemma \ref{smoothgradproj}

\begin{align}
\label{dis90678}
\|\bo{w}_i\|_{(j_1,v)}&=\|\bo{w}_i\|_2\left(1+O(\delta_1+\eta_1+2^{-m_1/4})\right)\grad_{\Pro}((j_1,v),\phi) \text{ and hence,}\\
\label{app453}
\sum_{\bo{w}_i\in \cC(\phi)}\|\bo{w}_i\|_{(j_1,v)}&=A_{\phi}\left(1+O(\delta_1+\eta_1+2^{-m_1/4})\right) \grad_{\Pro}((j_1,v),\phi)
\end{align}
where $A_{\phi}=\sum_{i=1}^t\|\bo{w}_i\|_2\mathbf{1}(\bo{w}_i \in \cC(\phi)).$
In the sequel for brevity we will denote the term 
\begin{equation}\label{app4512}
\left(1+O(\delta_1+\eta_1+2^{-m_1/4})\right) \grad_{\Pro}((j_1,v),\phi)
\end{equation}
 in \eqref{app453}  by $\tilde \grad_{\Pro}((j_1,v),\phi).$
Now since the angle made by each $\bo{w}_i\in \cC(\phi)$ lies in the interval $[\phi, \phi+\eta_1),$ for all small $\eta_1$ it also follows that $(1-\eta_1^2)A_{\phi}<\|\bo{w}_{\phi}\|_2\le A_{\phi}.$
Thus from the above two expressions it follows that 

\begin{equation}\label{normbd564}
(1-\eta_1^2)A_{\phi}\tilde \grad_{\Pro}((j_1,v),\phi)\le \|\bo{w}_{\phi}\|_{(j_1,v)}\le A_{\phi}\tilde \grad_{\Pro}((j_1,v),\phi).
\end{equation}

For each $\phi$ let us consider the value ${b}_{\phi}=\frac{A_{\phi}}{\|\bw\|_2}$. 
Now without loss of generality we can assume that there exists a universal constant $C$ such that ${b}_{\phi}\le C$ for all $\phi$ since otherwise we would be done using \eqref{dis90678} and the fact that $ \grad_{\Pro}((j_1,v),\phi)$ is bounded away from zero and infinity for any $\phi.$
 We now define the set $
\cB=\{\phi\in \mathbb{S}^1(\eta_1): b_{\phi}\le \e_1\}
$
( the value of $\e_1$ is specified later)
and hence 
\begin{equation}\label{ignoresmall}
\|\sum_{\phi \in \cB}\bo{w}_{\phi}\|_2 \le \frac{2\pi \e_1}{\eta_1} \|\bo{w}\|_2.
\end{equation}
Now let $$\bo{c}_{\phi}=\frac{\bo{w}_{\phi}}{\|\bo{w}\|_2}\frac{\sC n}{100\times 2^{j_1}}, \text{ and let } \bo{c}=\frac{\bo{w}}{\|\bo{w}\|_2}\frac{\sC n}{100\times  2^{j_1}}.$$
Thus we have rescaled $\bo{w}$ to get a vector $\bo{c}$ of length $\frac{\sC n}{100 \times 2^{j_1}}$ and scaled all the $\bo{w}_{\phi}$'s by the same factor to obtain the $\bo{c}_{\phi}$'s.
For convenience let $\mathbb{S}^{1}(\eta_1)\setminus \cB=\{\phi_1,\phi_2,\ldots,\}$
We now consider the sequence of points $\bo{v}_0, \bo{v}_1,\bo{v}_2,\ldots$ such that $\bo{v}_{i}-\bo{v}_{i-1}=\bo{c}_{\phi_i}$ and let $\bo{v}_0$ be for concreteness the center point of $\Ub_{\sC n}(j_1,v).$

Now consider the path $\cP$  (recall from \eqref{abbre34}) obtained by concatenation of paths $\cP_1,\cP_2,\cP_3,\ldots$ where $\cP_i$ is the shortest path  between $\bo{v}_{i-1}$ and $\bo{v}_i$. 
 
At this point we make another assumption that none of the points $\bo{v}_i$  is outside  $\Ub_{\sC n}(j_1,v)$ \footnote{This is  not essential for the proof but is done for convenience. Note that if this assumption is not satisfied this can always be achieved by chopping our vectors $\bo{c}_{\phi_j}$ in to smaller vectors and rearranging the order of the sum $\sum_{j=1}^{i-1}\bo{c}_{\phi_j}$.}. 
Now assuming  that $\Ub_{\sC n}(j_1,v)$ is $(\delta_1,\ell_1,k_1)-\SST$ as in the hypothesis of the proposition, it follows from Lemma \ref{smoothgradproj} that $$|\cP_i|\le  \|\bo{c}_{\phi_i}\|_2 \tilde \grad_{\Pro}((j_1,v),\phi_i),$$ provided that $\e_1 \gtrsim 2^{-\frac{m_1}{4}}$ since by hypothesis as every $\phi_{i} \notin \cB$, $\|\bo{c}_{\phi_i}\|_2\ge \frac{\e_1\sC n}{100 \times 2^{j_1}}$.
Thus 
\begin{equation}\label{eq221}
|\cP|=\sum_{i}|\cP_i|\le (1+O(\delta_1+\eta_1+2^{-\frac{m_1}{4}}))\sum_{i}\|\bo{c}_{\phi_i}\|_{(j_1,v)}.
\end{equation}
Now as $\cP$ is  a path (not necessarily the shortest) joining $\bo{v}_0$ and $\bo{v}_0+\bo{c}_*$ where $\bo{c}_*=\sum_{i}\bo{c}_{\phi_i}$, by  stability we have 
\begin{equation}\label{eq222}
|\cP|\ge (1-O(\delta_1+\eta_1+2^{-\frac{m_1}{4}}))\|\bo{c}_*\|_{(j_1,v)}.
\end{equation}
However note that by \eqref{ignoresmall}, it follows that $\|\bo{c}-\bo{c}_*\|_2 \le O(\frac{\e_1}{\eta_1}) \|\bo{c}\|_2 $ and hence by Lemma \ref{smoothgradproj} 
\begin{equation}\label{eq223}
\|\bo{c}_*\|_{(j_1,v)}\ge \left(1-O(\delta_1+\frac{\e_1}{\eta_1}+2^{-\frac{m_1}{4}})\right) \|\bo{c}\|_{(j_1,v)}.
\end{equation}

Putting the above together (letting $1+B=1+O(\delta_1+\eta_1+2^{-m_1/4})$ appearing in \eqref{app4512}) it follows that
\begin{align*}
\sum_{i=1}^t \| \bo{w}_i\|_{(j_1,v)}&\overset{\eqref{normbd564}}{\ge} (1-\eta_1^2) (1-B) \sum_{\phi \in \bS^{1}(\eta_1)}\| \bo{w}_{\phi}\|_{(j_1,v)} \ge (1-\eta_1^2)(1-B)\sum_{\phi \in \bS^{1}(\eta_1)\setminus \cB}\| \bo{w}_{\phi}\|_{(j_1,v)},\\
&=(1-\eta_1^2)(1-B) \sum_{\phi \in \bS^1(\eta_1)\setminus \cB}\| \bo{c}_{\phi}\|_{(j_1,v)}\frac{100 \times 2^{j_1}\|\bo{w}\|_2}{{\sC n}},\\ 
&\ge (1-\eta_1^2)(1-B)^2(1-B-O(\frac{\e_1}{\eta_1})) \|\bo{c}\|_{(j_1,v)}\frac{100\times 2^{j_1}\|w\|_2}{{\sC n}} \ge (1-O(\delta_1+2^{-\frac{m_1}{16}}))\|\bo{w}\|_{(j_1,v)}
\end{align*}
where the second to last inequality follows from \eqref{eq221}, \eqref{eq222} and \eqref{eq223} and the final inequality follows by choosing $\eta_1=2^{-m_1/8}$ and $\e_1\gtrsim 2^{-m_1/4}$ ensuring $\frac{\e_1}{\eta_1}=O(2^{-\frac{m_1}{16}}).$

\section{Stability of the gradient}
\label{s:sproof}

This section is devoted to proving Proposition \ref{t:stable}. It turns out that this property has little to do with the specific details of the first passage percolation metric, rather it is a property of general distance functions on $\R^2$ that are comparable to the Euclidean metric  
i.e., it satisfies triangle inequality and  that for all $x,y\in \R^2$ such that $|x-y|$  is large enough (possibly $n$ dependent)
\begin{equation}
\label{e:comp2}
\alpha |\bx-\by|\leq \PT(\bx,\by) \leq 3b |\bx-\by|,
\end{equation}
for some $\alpha>0$. For the ease of reading we recall the statement of the proposition and state it as a theorem to highlight the fact that its generality makes it potentially applicable in other problems of metric geometry.  Recall our terminology that $\bz\in \R^2$ is $(\delta, \bS^1(\eta),\ell,k)-\SST$  if $\bz$ is $(\delta,\theta,\ell,k)-\SST$ for each $\theta \in \bS^1(\eta)$. 

\begin{thm}
\label{t:stablegen}
Fix $\delta,\e,\eta >0,$ and $k\in \N$ and $J_1\in \N$. There exists $J_2\in \N$  such that for all large enough $n$: for $\Pi \in \sU^*_{\zeta}(n)$ (note that \eqref{e:comp2} holds for all $\bx,\by \in \Bb(10n)$ such that $|\bx-\by|\ge \sqrt n$) there exists $J_1\leq j\leq J_2$   such that 
$$\#\{\bz\in \Lb(n): \bz~\text{is not}~(\delta, \bS^1(\eta),\frac{ n}{2^j},k)-\SST\} \leq \e n^2 .$$
\end{thm}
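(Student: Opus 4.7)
The plan is a multi-scale pigeonhole argument that uses only the bi-Lipschitz comparability \eqref{e:comp2} and the triangle inequality, reflecting the intrinsically metric character of the statement. First, for each fixed direction $\theta \in \bS^1(\eta)$ and scale $j$, introduce the energy
\[
\Phi_j(\theta) := \sum_{\bz \in \Lb(n)} \grad(\bz, \theta, n/2^j).
\]
By \eqref{e:comp2}, each summand lies in $[\alpha, 3b]$ (valid since $n/2^j \gg \sqrt{n}$ for bounded $J_2$ and $n$ large), so $\Phi_j(\theta) \asymp n^2$. Applying the triangle inequality to cut each length-$n/2^j$ sub-segment at its midpoint and then shifting the summation index by $(\theta, n/2^{j+1})$ (the boundary loss is $o(n^2)$ because $\PT$ is defined on the larger box $\Bb(10n)$) yields $\Phi_j(\theta) \le \Phi_{j+1}(\theta)$. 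Thus $j \mapsto \Phi_j(\theta)$ is monotone non-decreasing and confined to a bounded interval.

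Next, set $m = \lceil \log_2 k \rceil$ and $T = J_2 - J_1$. Since the total increase of $\Phi$ across $[J_1, J_2]$ is at most $(3b-\alpha) n^2$, a pigeonhole over the $\lfloor T/m \rfloor$ disjoint length-$m$ blocks of consecutive scales produces, for any prescribed $\e' > 0$, a block $[j_0, j_0+m]$ with $\Phi_{j_0+m}(\theta) - \Phi_{j_0}(\theta) \le \e' n^2$, provided $T$ is large enough in terms of $m$, $\e'$, $\alpha$, and $b$. Define the pointwise non-negative defect
\[
\psi_j(\bz,\theta) := \tfrac{1}{2}\bigl[\grad(\bz,\theta,n/2^{j+1}) + \grad(\bz + (\theta, n/2^{j+1}), \theta, n/2^{j+1})\bigr] - \grad(\bz, \theta, n/2^j);
\]
the same change of variables used for monotonicity gives $\sum_{\bz \in \Lb(n)} \psi_j(\bz,\theta) = \Phi_{j+1}(\theta) - \Phi_j(\theta) + o(n^2)$. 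Telescoping this across the chosen block and applying Markov's inequality then shows that all but an arbitrarily small fraction of $\bz$'s satisfy $\sum_{s=0}^{m-1} \psi_{j_0+s}(\bz,\theta)$ tiny, which in turn forces the $2^m$-fold dyadic subdivision at scale $\ell := n/2^{j_0+m}$ starting at $\bz$ to have total passage time within a factor $(1 + O(\e'))$ of the direct distance $\PT(\bz, \bz + (\theta, 2^m \ell))$.

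The technical heart is upgrading this ``near-additivity across scales'' at a good $\bz$ to the pointwise stability condition \eqref{stabnot32}. For each $k' \le 2^m$, expand $k'$ in binary and estimate $\PT(\bz, \bz + (\theta, k'\ell))$ from above by the sum of the corresponding dyadic pieces and from below by $\PT(\bz, \bz + (\theta, 2^m\ell))$ minus the complementary dyadic pieces, each controlled by the Lipschitz upper bound $3b$. The near-additivity forces the $2^m$ dyadic pieces to sum to approximately the direct distance, while the lower bound $\alpha$ prevents any single piece from being anomalously small; together these show that every partial sum is within $(1 + O(\e'))$ of the linear profile $(k'/2^m)\,\PT(\bz, \bz + (\theta, 2^m\ell))$. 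Choosing $\e'$ small compared to $\delta$, a union bound over the $O(1/\eta)$ directions in $\bS^1(\eta)$ together with a final pigeonhole over the scales $j_0(\theta)$ yields a single $j$ that simultaneously works for every $\theta$, completing the argument once $J_2$ is taken large as a function of $\delta, \e, \eta, k$.

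The main obstacle will be this final conversion. Markov-type arguments naturally deliver only an additive bound on total defect, whereas stability is a multiplicative statement that must hold at every intermediate length $k'\ell$. Managing this requires carefully balancing three quantities — the exceptional-set size $\e n^2$, the precision $\delta$, and the depth $k$ — against the parameters $\e'$, $m$, and $T$. The uniform lower bound $\alpha$ is crucial here: it prevents any single dyadic piece from being anomalously small, which is precisely what allows a total additive bound on defects to be transformed into pointwise multiplicative control of the gradient across the full range of scales.
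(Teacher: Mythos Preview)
Your energy-monotonicity framework is natural and your pigeonhole produces, for most $\bz$, an interval of scales on which the triangle defect $\psi_j(\bz,\theta)$ is small. The gap lies in the ``technical heart'' step: smallness of $\psi$ does \emph{not} yield stability. The quantity $\psi_j(\bz,\theta)$ measures only the failure of sub-additivity when the segment from $\bz$ of length $n/2^j$ is cut at its midpoint; it says nothing about whether the two halves are close to each other. Concretely, one can have $\grad(\bz,\theta,n/2^{j+1})=3b$ and $\grad(\bz+(\theta,n/2^{j+1}),\theta,n/2^{j+1})=\alpha$ with $\grad(\bz,\theta,n/2^j)=(3b+\alpha)/2$, so that $\psi_j(\bz,\theta)=0$ exactly while the first-step gradient is off by the worst possible factor $3b/\alpha$. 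Iterating this, all $2^m$ leaf pieces can satisfy near-additivity at every node of the dyadic tree and still have individual values anywhere in $[\alpha\ell,3b\ell]$; the lower bound $\alpha$ merely pins the ratio of pieces to $[\alpha/3b,3b/\alpha]$, not to $[1/(1+\delta),1+\delta]$. There is also a secondary issue: to control the dyadic tree rooted at a good $\bz$ you need $\psi$ small at all intermediate points $\bz+(\theta,i\ell)$, which are not lattice points and hence not covered by your Markov bound over $\Lb(n)$.

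What is missing is a second, independent constraint that forces concentration of the piece-to-whole ratios. The paper obtains it by working along a fixed line, building $2^{\fm}$-ary trees whose vertices carry the passage times $Y_v$, and studying the edge variables $X_e=2^{\fm}Y_w/Y_v$. The crucial observation is that along any root-to-leaf path the product $\prod_j X_{e_j}$ is deterministically bounded by \eqref{e:comp2}, giving a lower bound on $\E_{\mathfrak u}(\log X_e)$ at some scale, while your type of argument gives the upper bound $\E_{\mathfrak u}(X_e)\le 1+O(\delta_3)$. Combining these two via the Taylor bound $\log(1+y)\le y-C'y^2$ forces $\E_{\mathfrak u}(X_e-1)^2=O(\delta_3)$, i.e.\ most children of most parents have ratio close to $1$, which then feeds into Lemma~\ref{crudelem43}. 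Your additive defect captures only the first-moment part of this; without a logarithmic/product constraint there is no second-moment control and the conversion to stability fails.
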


Above we have replaced $\sC n$ in the statement of the proposition by $n$ for notational brevity since as the reader will notice the arguments do not depend on the exact value in any way. Moreover from now on without explicitly stating it,  we will assume that  \eqref{e:comp2} holds for all pairs of points $\bx, \by \in \Bb(10 n)$ where $|\bx-\by|\ge \sqrt n$ even though we will not explicitly mention the last qualification every time since it will be trivially satisfied in our applications.

\subsection{A roadmap of the proof}
As we are not shooting for optimal bounds the proofs will often rely on several  crude averaging arguments and applications of the pigeon hole principle along with the bi-Lipschitz nature of the FPP metric. However, there are many technical  steps involved and for the sake of exposition we give a brief overview of the argument at this point. Our argument relies on the following observations. 

\nin
(1) Fix $\bz\in \Bb(n)$ and $\theta\in \bS^1(\eta)$. Observe that for all $J_2>J_1,$
\begin{align}\label{tria786}
 \PT(\bz,\theta,\frac{n}{2^{J_2}},2^{J_2})-\PT(\bz,\theta, \frac{n}{2^{J_1}},2^{J_1})&= \sum_{j=J_1}^{J_2-1}\left[\PT(\bz,\theta,\frac{n}{2^{j+1}},2^{j+1})-\PT(\bz,\theta,\frac{n}{2^j},2^{j})\right].
\end{align}
The LHS in \eqref{tria786} is bounded by $3bn$ and all the terms in the RHS in \eqref{tria786} are positive by triangle inequality (as in Lemma \ref{monotone}). 

\nin
(2)
Thus if $J_2-J_1\ge \frac{1}{\e},$ then by the pigeon-hole principle  there must exist one $J_1\le j\le J_2$ such that $\left[\PT(\bz,\theta,\frac{n}{2^{j+1}},2^{j+1})-\PT(\bz,\theta,\frac{n}{2^j},2^{j})\right]\le O(\e) n$. {As a matter of fact we should find consecutive many such $j$ if $J_2-J_1\gg \frac{1}{\e}$.}

\nin
(3) Now for $j$ as in (2) consider the discrete segments 
\begin{align*}
\sS(\bz,\theta,\frac{n}{2^{j}},2^{j})=[\bz_0,\bz_1,\ldots \bz_{2^{j}}] \text{ and, }\sS(\bz,\theta,\frac{n}{2^{j+1}},2^{j+1})=[\bz_0, \bz_{0,1},\bz_1, \bz_{1,2},\bz_2,\ldots ,\bz_{2^{j}}], 
\end{align*}
where $\bz_{i,i+1}$ is the mid-point of the line segment joining $\bz_i$ and $\bz_{i+1}.$ Thus the above observation together with the lower bound in \eqref{e:comp2} suggests that for most $i$, 
$$\PT(\bz_i,\bz_{i,i+1})+\PT(\bz_{i,i+1},\bz_{i+1})\le (1+O(\e))\PT(\bz_i,\bz_{i+1}).$$ 
However this is not quite enough to establish  stability and in fact we need something along the lines of the following stronger fact (see Lemma \ref{crudelem43}):   for most $i,$
$$\PT(\bz_i,\bz_{i,i+1})\approx\PT(\bz_{i,i+1},\bz_{i+1})\approx \frac{1}{2}\PT(\bz_i,\bz_{i+1}).$$ 

\nin 
(4)
Suppose the contrary and without loss of generality assume that 
$$\PT(\bz_i,\bz_{i,i+1})\ge  (\frac{1}{2}+\delta) \PT(\bz_i,\bz_{i+1}).$$ 
The contradiction will come from the fact that the above cannot be true for many consecutive scales.  Indeed, if it was true for $j'$ many consecutive scales, then recursively picking one half of an interval at each scale in which the above inequality holds leads to an interval $[\bw_1, \bw_2]$ such that $\|\bw_1-\bw_2\|_2=\frac{\|\bz_i-\bz_{i+1}\|_2}{2^{j'}}$ but 
$$\PT(\bw_1,\bw_2)\ge (1+2\delta)^{j'}\frac{\PT(\bz_i,\bz_{i+1})}{2^{j'}}.$$
Clearly for $j'$ large enough (depending on $\delta$) this contradicts the upper bound in \eqref{e:comp2}. 
We now move towards making the above formal.

Recalling the notion of  stability from \eqref{stabnot32}, the following crude lemma will be useful to show the latter.

\begin{lem}\label{crudelem43} Given $\delta>0$, $ \theta \in \bS^1$ and $\ell, k \in \N$.  Recalling that $\sS(\bz,\theta,\ell,k)=[\bz=\bz_0,\bz_1,\ldots,\bz_k],$ suppose  
\begin{align*}
\sup_{0 \le i,j \le k-1} \frac{\PT(\bz_i,\bz_{i+1})}{\PT(\bz_j,\bz_{j+1})} \le 1+\delta,  ~~~\text{and }\quad 
\frac{k\PT(\bz_0,\bz_{1})}{1+\delta}\le {\PT(\bz_0,\bz_{k})}\le {k\PT(\bz_0,\bz_{1})}(1+\delta). 
\end{align*} 
Then for each $i\leq k$, and $k'\le k-i,$ $\bz_i$ is $(\delta',\theta,\ell,i+k')-\SST$ where $\delta'=O(\delta k)$.
\end{lem}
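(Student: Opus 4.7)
The plan is to introduce the shorthand $t_i := \PT(\bz_i, \bz_{i+1})$ for $0 \le i \le k-1$. The first hypothesis rewrites as $t_i \in [t_0/(1+\delta), (1+\delta) t_0]$ for every $i$, and the second reads $\PT(\bz_0, \bz_k) \in [k t_0/(1+\delta), k(1+\delta) t_0]$. Since $\bz_i + (\theta, \ell m) = \bz_{i+m}$ by construction of the discrete segment, showing that $\bz_i$ is $(\delta', \theta, \ell, m)$-$\SST$ for all $1 \le m \le k-i$ reduces to a two-sided comparison $\PT(\bz_i, \bz_{i+m}) \approx m t_i$ up to a factor $(1+\delta')$. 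The upper half of this comparison is immediate from the triangle inequality along the segment: $\PT(\bz_i, \bz_{i+m}) \le \sum_{j=i}^{i+m-1} t_j \le m(1+\delta) t_i$, so any $\delta' \ge \delta$ already suffices there.

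For the matching lower bound I would apply the triangle inequality at the global scale,
\[ \PT(\bz_0, \bz_k) \le \PT(\bz_0, \bz_i) + \PT(\bz_i, \bz_{i+m}) + \PT(\bz_{i+m}, \bz_k), \]
and bound the two flanking passage times by triangle inequality along the segment together with the first hypothesis, obtaining $\PT(\bz_0, \bz_i) + \PT(\bz_{i+m}, \bz_k) \le (k-m)(1+\delta) t_0$. Combined with $\PT(\bz_0, \bz_k) \ge k t_0/(1+\delta)$ and the expansion $(1+\delta)^2 = 1 + 2\delta + \delta^2$, this yields
\[ \PT(\bz_i, \bz_{i+m}) \ge \frac{t_0\bigl[m - (2\delta + \delta^2)(k-m)\bigr]}{1+\delta} \ge \frac{t_0 (m - 3\delta k)}{1+\delta}. \]
Substituting $t_0 \ge t_i/(1+\delta)$ and matching this against $m t_i/(1+\delta')$ then shows that $\delta' = C \delta k$ works for a universal constant $C$, in the regime $\delta k$ small enough (say $\delta k \le 1/6$) that $m - 3\delta k \ge m/2$ uniformly in $1 \le m \le k$. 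In the complementary regime $\delta k \gtrsim 1$ the claimed bound $\delta' = O(\delta k)$ is itself a universal constant, and the bi-Lipschitz comparison \eqref{e:comp2} trivially gives $\PT(\bz_i, \bz_{i+m}) \ge \alpha m \ell \ge (\alpha/3b) m t_i$, so stability holds with a fixed $\delta'$ depending only on $\alpha$ and $b$.

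There is no substantive obstacle: conceptually the argument is just two applications of the triangle inequality. The hypothesis pins the total length close to $k t_0$, so any local sub-sum of size $m$ can differ from its expected value $m t_0$ only by the global slack, which is $O(\delta k) t_0$. The only real care required is the bookkeeping of multiplicative $(1+\delta)$ factors so that the constant hidden inside $O(\delta k)$ does not depend on $m$; the smallest nontrivial case (namely $m = 2$, since $m=1$ is trivially $\PT(\bz_i, \bz_{i+1}) = t_i$) controls this constant.
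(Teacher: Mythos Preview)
Your proposal is correct and follows essentially the same route as the paper's proof: the upper bound $\PT(\bz_i,\bz_{i+m})\le m(1+\delta)t_i$ via the triangle inequality along the segment, and the lower bound by writing $\PT(\bz_0,\bz_k)\le \PT(\bz_i,\bz_{i+m})+(k-m)(1+\delta)t_0$ and combining with the hypothesis $\PT(\bz_0,\bz_k)\ge kt_0/(1+\delta)$. The paper does not bother with your case split on the size of $\delta k$ (it tacitly works in the regime $\delta k$ small, which is the only one used downstream), but your extra remark there is harmless and correct.
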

\begin{proof} By hypothesis
\begin{align*}
\frac{1}{(1+\delta)}k\PT(\bz_{0},\bz_1)\le  \PT(\bz_0,\bz_k)\le {(1+\delta)}k\PT(\bz_{0},\bz_1).
\end{align*}
Now as for any $i\le k$ and $k'\le k-i$ we have  $\PT(\bz_0,\bz_k)\le \PT(\bz_i,\bz_{i+k'})+(k-k')(1+\delta)\PT(\bz_{0},\bz_1).$ Thus it follows that  
\begin{align*}
 \PT(\bz_i,\bz_{i+k'})&\ge\frac{1}{(1+\delta)}k\PT(\bz_{0},\bz_1)-(k-k')(1+\delta)\PT(\bz_{0},\bz_1)\ge  k'\PT(\bz_{0},\bz_1)(1-O(\delta k)), \\
&\ge k'\PT(\bz_{i},\bz_{i+1})(1-O(\delta k)).
\end{align*}
Moreover note that by triangle inequality and the hypothesis,
 $\PT(\bz_i,\bz_{i+k'})\le (1+\delta)k'\PT(\bz_i,\bz_{i+1}).$
\end{proof}
Thus in the sequel to prove   stability we will only prove that the hypothesis of Lemma \ref{crudelem43} is satisfied. 
Going back to the proof of Theorem \ref{t:stablegen} following 
 the line of argument in the roadmap above, one can deduce the existence of many  stable points along a fixed line in a given direction. Further arguments are then necessary to strengthen this to get the full result. We shall first state and prove the weaker version.

\subsection{Stability on a fixed line}

For the weaker version let us consider the discrete segment $\sS(\bz,\theta,\frac{n}{2^j},2^j)$ for some $\bz\in \Bb(n)$ and $\theta\in \bS^1(\eta)$. We shall show most points on this segments are stable for $k$ consecutive intervals.

\begin{lem}
\label{l:weak}
Let $\bz\in \Bb(n), \theta \in \bS^1(\eta), k\in \N,$ and let $\delta_2>0$ and $J_1\in \N$ be fixed. Then there exists $\fm$ such that for all small enough $\delta_3$ the following holds:  there exists $j\in \N$ with $  J_1\le j \le (J_1+ \frac{1}{\delta_3^2})$ for which all but $O(\delta_2)$ fraction of the points $\bz_i$ in the discrete segment $\sS(\bz,\theta,\frac{n}{2^{j\fm }},2^{j\fm})$  are $(\delta_2,\theta,\frac{n}{2^{j\fm}},k)-\SST$.
\end{lem}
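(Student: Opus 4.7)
My plan follows the four-step roadmap sketched immediately before the lemma. The first step is a pigeonhole-telescope: defining $P_j := \PT(\bz, \theta, n/2^{j\fm}, 2^{j\fm})$, the sequence is nondecreasing by Lemma \ref{monotone} and bounded by $P_j \le 3bn$ via the Lipschitz upper bound in \eqref{e:comp2}. Hence
\[
\sum_{j=J_1}^{J_1 + 1/\delta_3^2 - 1}(P_{j+1} - P_j) \le 3bn,
\]
and pigeonhole yields some $j$ in the range with $P_{j+1} - P_j \le 3b\delta_3^2 n$. A mild enlargement of the range, absorbed into the choice of $\delta_3$, similarly produces $L$ consecutive such scales for any prescribed constant $L$ depending on $\delta_2$ and $k$, which will be used in the final step.

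The second step converts this aggregate smallness into a per-interval statement. Writing $D_i := \PT(\bz_i, \bz_{i+1})$ and letting $D^*_{i,r}$, $r = 1, \dots, 2^\fm$, denote the sub-interval passage times of $[\bz_i, \bz_{i+1}]$ at scale $(j+1)\fm$, we have $P_{j+1} - P_j = \sum_i(\sum_r D^*_{i,r} - D_i)$ with nonnegative summands by triangle inequality, and the Lipschitz lower bound yields $D_i \ge \alpha n/2^{j\fm}$. A Markov-type argument then shows that outside an $O(\delta_3^2/(\alpha\delta_2))$ fraction of indices, the balanced-refinement condition $\sum_r D^*_{i,r} \le (1+\delta_2) D_i$ holds; taking $\delta_3$ sufficiently small relative to $\delta_2$ and $L$ secures this condition simultaneously at all $L$ consecutive good scales for all but an $O(\delta_2)$ fraction of indices.

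The third step is to apply Lemma \ref{crudelem43} on windows of $k$ consecutive intervals at scale $j\fm$ to convert local balance into stability. Its hypotheses for a window starting at $\bz_i$ are (a) the passage times $D_i, \dots, D_{i+k-1}$ are pairwise within $(1+O(\delta_2))$ of one another, and (b) $\PT(\bz_i, \bz_{i+k})$ agrees with $kD_i$ up to the same factor. The upper bound of (b) is immediate from (a) and the triangle inequality.

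The main obstacle is verifying both (a) and the lower bound of (b), which I handle via the contradiction argument of roadmap point (4). If at a nonnegligible fraction of indices a persistent imbalance of the form $\PT(\bz_i, \bz_{i,i+1}) \ge (\tfrac12 + \delta_2)\PT(\bz_i, \bz_{i+1})$ (or its within-window variant, where $\bz_{i,i+1}$ denotes the scale-$(j+1)\fm$ midpoint) survives across $L$ consecutive refinement scales, then recursively selecting the heavier half at each refinement step produces a segment $[\bw_1, \bw_2]$ of Euclidean length $\sim n/2^{(j+L)\fm}$ whose passage time exceeds $(1+2\delta_2)^{L}\alpha n/2^{(j+L)\fm}$. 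For $L \gtrsim \log(3b/\alpha)/\delta_2$ this violates the Lipschitz upper bound in \eqref{e:comp2}, and the contradiction confines such imbalances to an $O(\delta_2)$ fraction of indices; the same comparison also pins $\PT(\bz_i, \bz_{i+k})$ to within $(1 \pm O(\delta_2)) kD_i$, giving the lower bound in (b). Choosing $\fm$ large enough for the recursive scheme to fit inside a single refinement block, and $\delta_3$ small enough so that $L$ consecutive good scales lie within the window of length $1/\delta_3^2$, Lemma \ref{crudelem43} then yields stability for a $(1 - O(\delta_2))$ fraction of the $\bz_i$.
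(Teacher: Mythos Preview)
Your step~4 has a genuine gap. The recursive--halving argument from roadmap point~(4) shows that no single interval can support a nested chain of ``heavy halves'' of length $L$, since that would violate the upper Lipschitz bound in \eqref{e:comp2}. But this only rules out \emph{persistent} imbalance along a nested descent; it says nothing about imbalance at the particular scale $j$ you selected. It is perfectly consistent with the recursive argument that a large fraction of intervals at scale $j$ have a heavy child, provided each such heavy child has balanced children at scale $j+1$. Thus your sentence ``the contradiction confines such imbalances to an $O(\delta_2)$ fraction of indices'' does not follow: the complement of ``persistent imbalance across $L$ scales'' is ``imbalance dissipates somewhere within $L$ scales'', not ``balance holds at scale $j$''. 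Your balanced--refinement inequality $\sum_r D^*_{i,r}\le (1+\delta_2)D_i$ from step~2 is a first--moment bound on $X_e:=2^{\fm}Y_w/Y_v$ and by itself cannot force most individual $X_e$ near~$1$.

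What is missing is a second--moment bound, and this is exactly what the paper supplies. Your step~2 gives $\E_{\mathfrak u}(X_{e_{j+1}})\le 1+O(\delta_3)$ on the good window of scales. The paper then runs a \emph{second} pigeonhole, this time on logarithms: since $\prod_{j=J_1+1}^{J_2}X_{e_j}$ along any root--to--leaf path lies in $[C^{-1},C]$ by \eqref{e:comp2}, one has $\bigl|\sum_j \E_{\mathfrak u}\log X_{e_j}\bigr|\le \log C$, so some $j$ in the window also satisfies $\E_{\mathfrak u}\log X_{e_j}\ge -O(\delta_3)$. Writing $y_e=X_e-1$ and using $\log(1+y)\le y-C'y^2$ (valid since $y_e$ is bounded), the two bounds together force $\E_{\mathfrak u}(X_e-1)^2=O(\delta_3)$, whence Chebyshev gives $|X_e-1|\le \delta_3^{1/4}$ for all but an $O(\sqrt{\delta_3})$ fraction of edges. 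This is precisely hypothesis~(a) of Lemma~\ref{crudelem43} for all $2^{\fm}$ children of a common parent, and the parent--level inequality then supplies~(b); the lemma applies with $\delta'=O(2^{\fm}\delta_3^{1/4})$. The recursive--halving heuristic in the roadmap is motivational; the actual mechanism pinning down a single good scale is this $\E X$ versus $\E\log X$ comparison, which your proposal omits.
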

The quantification in the above statement might be a little hard to parse, but it will create some simplification in the notational choices later. 

For the moment let us fix a value of $\fm$ to be specified later. To make formal the outline described in the subsection it will be convenient to associate trees to the the intervals in $\sS(\bz,\theta,\frac{n}{2^{\fm J_1}},2^{\fm J_1})=[\bz=\bz_0, \bz_1, \ldots, \bz_{2^{\fm J_1}}]$. Let 
\begin{equation}\label{treecon12}
\cT_1,\cT_2.\ldots, \cT_{2^{\fm J_1}}
\end{equation}
be complete $2^{\fm}-$ary trees of depth $J_2-J_1$ where the value of $J_2$ will be specified to be a  large enough number later (for convenience we shall index the levels of these trees by $j=J_1, J_1+1\ldots , J_2$).  Let $L^{(i)}_j$ denote the vertices at the $j^{th}$ level of $\cT_{i}$ and let $L_j=\cup_{i}L^{(i)}_j,$ denote the union of the vertices at the $j^{th}$ level. 
We will identify $\cT_{i}$ with the interval $[\bz_{i-1},\bz_{i}]$. Now  for any $J_1\le j\le J_2,$ consider the discrete segment $$\sS(\bz,\theta, \frac{n}{2^{j\fm}},2^{j\fm})=[\bz^*_{0},\ldots \bz^*_{2^{(j-J_1)\fm}},\bz^*_{2^{(j-J_1)\fm}+1},\ldots, \bz^*_{2^{(j+1-J_1)\fm}},\ldots \bz^*_{2^{(j-1)\fm}}\ldots \bz^*_{2^{j\fm}}].$$

Naturally $[\bz^*_{0},\ldots ,\bz^*_{2^{(j-J_1)\fm}}]$ is a discretization of the interval $[\bz_0,\bz_1]$ and hence can be associated to $L^{(1)}_{j}$ where each vertex in $L^{(1)}_{j}$ corresponds to $[\bz^*_h,\bz^*_{h+1}]$ in the natural order. (e.g.\ the root of $\cT_1$ corresponds to the interval $[\bz_{0},\bz_1]$). The same correspondence holds for the other intervals and trees. See Figure \ref{fig12}.

\begin{figure}[h]
\centering
\includegraphics[scale=.7]{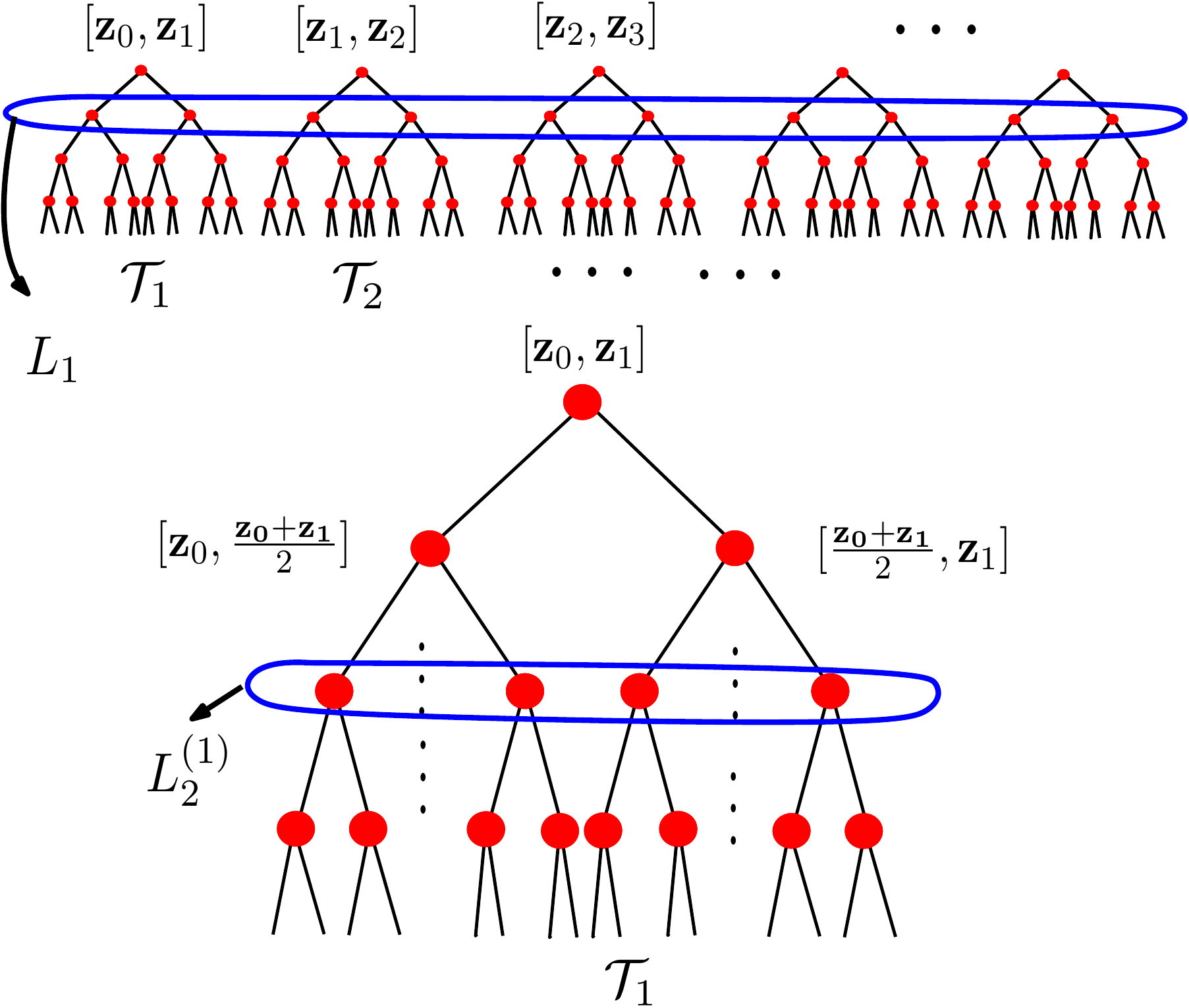}
\caption{This figure illustrates the various definitions introduced in this section related to the trees in \eqref{treecon12} in the toy case $\fm=1$ where the trees are binary trees.}
\label{fig12}
\end{figure}
Now for any  vertex $v$ in any of the trees let $Y_v:=\PT(\bz',\bz'')$ where $[\bz',\bz'']$ is the discrete segment associated to the vertex $v$. 
We need some further notation: let $U_{i,j}=\sum_{v\in L^{(i)}_j}Y_v$
and let $U_j=\sum_{i} U_{i,j}$. It will be convenient to frame our arguments using pigeon hole principle as applications of `the probabilistic method', 
and hence we define a set of random variables. For any $j$ pick uniformly any edge $e_{j+1}$ at the $(j+1)^{th}$ level across all the trees, i.e., connecting $L_{j}$ and $L_{j+1}$ and let 
\begin{equation}\label{edge23}
X_{e_{j+1}}:=\frac{2^{\fm}Y_{{w}}}{Y_{v}}
\end{equation}
 where $e_{j+1}=(v,w)$ and $v$ is closer to the root. For brevity we will identify the set of such edges with the set $L_{j+1}$ using the natural correspondence. 
Now by triangle inequality (again, as in Lemma \ref{monotone})
\begin{equation}\label{triangle}
\E(X_{e_{j+1}}\mid Y_{v})\ge 1.
\end{equation} 
 However the distributions of $X_{e_j}$ across various $j$ will not be independent and the joint distribution can be defined in the following way: pick uniformly a vertex among all the  leaf vertices across all the trees (note that it is naturally and uniquely associated with a uniformly chosen simple path from the root to the leaf in an uniformly chosen tree)  and label the edges on the path as 
$(e_{J_1+1},e_{J_1+2},\ldots , e_{J_2})$ where $e_i$ denotes the intersection of the path with the $i^{th}$ level.  
It is clear that $e_j$ is uniformly distributed among all edges connecting $L_{j-1}$ and $L_j$. 
 Notice that 
$$\prod_{j=J_1+1}^{J_2}X_{e_j}\overset{d}{=}\frac{2^{(J_2-J_1)\fm}Y_{w}}{Y_{v}}$$ where $Y_v$  and $Y_w$ are the variables attached to the root of a randomly chosen tree $\cT_i$ and a randomly chosen leaf of $L^{(i)}_{J_2-J_1}$ respectively.
We now bound the expectation of $X_{e_j}$ for all $J_1< j\le J_2.$
To do this consider the ratio $\frac{U_{j+1}}{U_j}.$ By definition, we have the following:
\begin{align}\label{expreps}
\frac{U_{j+1}}{U_j}=\frac{\sum_{w\in L_{j+1}}Y_w}{\sum_{v\in L_{j}}Y_v}=\frac{\sum_{v \in L_j}Y_v\E(X_{e_{j+1}}\mid Y_v)}{\sum_{v\in L_j}Y_v}\ge 1
\end{align}
where the second equality follows from \eqref{edge23} and the fact that the trees $\cT_i$ are $2^{\fm}-$ary and the final inequality follows from \eqref{triangle}. The following lemma completes the proof of Lemma \ref{l:weak} under a further assumption that on a sufficiently large interval contained in $[J_1, J_1+\frac{1}{\delta_3^2}]$ the LHS above is also upper bounded by $1+\delta_3$ for some small enough $\delta_3$. 

\begin{lem}
\label{l:conditional}
Fix $c>0$. In the setting of Lemma \ref{l:weak}, suppose there exists an interval $I\subseteq [J_1, J_1+\frac{1}{\delta_3^2}]$ with $|I|\geq \frac{c}{\delta_3}$ such that for all $j\in I$ for some small enough $\delta_3$ depending on $\fm$ and $\delta_2$
\begin{align}\label{smallgap}
0\le \frac{U_{j+1}}{U_j}-1 \le \delta_3.
\end{align}
Then the conclusion of Lemma \ref{l:weak} holds.
\end{lem}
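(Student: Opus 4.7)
The plan is to combine two ingredients: (i) a single-level Markov estimate that extracts from \eqref{smallgap} that most vertices $v\in L_j$ are \emph{tight}, meaning $\sum_{w\text{ child of }v}Y_w\le (1+\sqrt{\delta_3})Y_v$; and (ii) the multi-scale zooming-in contradiction from the roadmap above, which rules out grossly unbalanced partitions when tightness persists across many consecutive levels.

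\textbf{Step 1 (Single-level Markov for tightness).} Rewriting \eqref{smallgap} via \eqref{expreps} gives $\sum_{v\in L_j}Y_v(\E[X_{e_{j+1}}\mid Y_v]-1)\le \delta_3 U_j$, with non-negative summands by \eqref{triangle}. Markov's inequality then yields that a $(1-O(\sqrt{\delta_3}))$-fraction of $v\in L_j$ are tight, first in $Y_v$-weighted and then (via \eqref{e:comp2}) in uniform measure.

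\textbf{Step 2 (Sum hypothesis of Lemma \ref{crudelem43}).} For a tight $v$ with interval $[a,b]$ and children-partition $a=a_0<a_1<\cdots<a_{2^\fm}=b$, define the non-negative excesses $\varepsilon_s:=\sum_{l<s}Y_{w_l}-\PT(a,a_s)$. Because $Y_{w_s}\ge \PT(a,a_{s+1})-\PT(a,a_s)$ by reverse triangle, the $\varepsilon_s$ are non-decreasing in $s$ and hence all bounded by $\varepsilon_{2^\fm}\le \sqrt{\delta_3}Y_v$. Applied to any $k$-window $[a_s,a_{s+k}]$ this gives
\[
\PT(a_s,a_{s+k})\le \sum_{l=s}^{s+k-1}Y_{w_l}\le \PT(a_s,a_{s+k})+\sqrt{\delta_3}Y_v,
\]
which by the Lipschitz bound $\PT(a_s,a_{s+k})\ge \alpha k\ell_{j+1}$ is a multiplicative error of $O(\sqrt{\delta_3}\cdot 2^\fm/k)$: the second hypothesis of Lemma \ref{crudelem43} at tolerance $O(\delta_2/k)$ once $\delta_3$ is small enough.

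\textbf{Step 3 (Ratio hypothesis via zooming-in).} Tightness alone does not force consecutive $Y_{w_l}$'s to be close, so here the argument invokes the zooming contradiction: if a child $w$ of a tight $v$ satisfies $Y_w\ge (2^{-\fm}+\delta_*)Y_v$, then the density $Y_w/\ell_{j+1}$ exceeds $Y_v/\ell_j$ by a factor $(1+2^\fm\delta_*)$. Following such an unbalanced chain of tight vertices through $d$ consecutive levels multiplies the density by this factor each time, and for $d\gtrsim \log(3b/\alpha)/(2^\fm\delta_*)$ this contradicts the Lipschitz upper bound in \eqref{e:comp2}. Choosing $\delta_*\asymp \delta_2/k$ and $c$ in $|I|\ge c/\delta_3$ large enough (which forces $\delta_3\lesssim \delta_2\cdot 2^\fm/k$), a union bound over potential bad chains shows that the fraction of tight $v\in L_{j^*}$ with a $\delta_*$-unbalanced child is $O(\delta_2)$ for some $j^*\in I$.

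\textbf{Step 4 (Invoking Lemma \ref{crudelem43}).} At this $j^*$, all but an $O(\delta_2)$ uniform fraction of $v\in L_{j^*}$ are tight with $\delta_*$-balanced children, so both hypotheses of Lemma \ref{crudelem43} hold on each $k$-window of children with parameter $O(\delta_2/k)$. Lemma \ref{crudelem43} then upgrades these to $(\delta_2,\theta,\ell_{j^*+1},k)$-stable points $\bz_i=a_i$, yielding the conclusion of Lemma \ref{l:weak} with $j=j^*+1\in[J_1,J_1+1/\delta_3^2]$.

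The main obstacle is Step 3: turning the intuitive zooming picture into a quantitative branching-process bound on the fraction of vertices that host unbalanced tight chains of depth $d$, and calibrating $\delta_*,d,\fm,\delta_3$ so that the Lipschitz contradiction fits within $|I|\ge c/\delta_3$ while the exceptional fraction stays $O(\delta_2)$.
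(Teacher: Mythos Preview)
Your framework (verify both hypotheses of Lemma \ref{crudelem43} for most level-$j$ parents) is right, and Steps 1, 2, 4 are sound, but Step 3 has a genuine gap. The zooming-in contradiction only rules out a path along which the density $\rho_v=Y_v/|I_v|$ increases by a factor $(1+2^{\fm}\delta_*)$ at $d$ \emph{consecutive} levels. A single tight-but-unbalanced vertex $v$ does not initiate such a chain: its large child $w$ may itself be perfectly balanced, and the density gain at $v$ can be returned at later levels. Since $\rho$ can oscillate freely along a generic root-to-leaf path, no ``union bound over bad chains'' converts the nonexistence of long monotone chains into a bound on the fraction of unbalanced vertices at a fixed level. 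You flag this as the main obstacle, and indeed your outline does not overcome it.

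The paper replaces the chain picture by a telescoping potential. Since $\prod_{j\in I}X_{e_j}\in[C^{-1},C]$ along any root-to-leaf path (by \eqref{e:comp2}), one has $\bigl|\sum_{j\in I}\E_{\mathfrak u}[\log X_{e_j}]\bigr|\le\log C$, so by pigeonhole some $j\in I$ has $\E_{\mathfrak u}[\log X_{e_j}]\ge -O(\delta_3)$. Combined with $\E_{\mathfrak u}[X_{e_j}]\le 1+O(\delta_3)$ (your Step 1, transferred from $\mu_j$-weighting to uniform via \eqref{comparable}), the Jensen gap $\log(1+y)\le y-C'y^2$ for bounded $y$ forces $\E_{\mathfrak u}[(X_{e_j}-1)^2]=O(\delta_3)$. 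Chebyshev then gives $|X_e-1|\le\delta_3^{1/4}$ for a $(1-O(\sqrt{\delta_3}))$-fraction of level-$j$ edges, so a $(1-O(2^{\fm}\sqrt{\delta_3}))$-fraction of parents $v$ have \emph{all} children satisfying $Y_w=(1\pm\delta_3^{1/4})2^{-\fm}Y_v$. This delivers both hypotheses of Lemma \ref{crudelem43} at once: your separate tightness and balance steps collapse into a single second-moment estimate. The log-telescoping is exactly the quantitative form of ``density cannot drift far over $I$'' that your chain argument was reaching for; the point is that it must be phrased as an averaged potential (equivalently, track $\E_{\mathfrak u}[\log\rho_v]$ or $\E_{\mathfrak u}[\rho_v^2]$ across levels), not as a pathwise combinatorial bound.
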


\begin{proof}
Without loss of generality for this proof we shall write $I=[J_1,J_2]$ where $I$ is given by the hypothesis. It is a consequence of \eqref{e:comp2}  that for $j\le k$ and $v\in L_j$  and $w \in L_k,$ 
\begin{equation}\label{comparable}
\frac{1}{C}\le \frac{Y_v}{2^{(k-j)\fm}Y_w}\le C
\end{equation}
for some universal constant $C=C(b,\alpha)>1$. Define now the probability measure $\mu_j$ on the $j^{th}$ level vertices given by $\mu_j(v)=\frac{Y_v}{\sum_{v\in L_j} Y_v}$. In particular, \eqref{comparable} implies the Radon-Nikodym derivative of $\mu_j$ with respect to the uniform measure $\mathfrak{u}$ is bounded above and below by $C$ and $C^{-1}$ respectively. Now, \eqref{smallgap}, along with \eqref{expreps}, implies $\E_{{\mu_j}}(\E(X_{e_{j+1}}\mid v)-1)\leq \delta_3$. This, together with the above observation, and the fact $\E(X_{e_{j+1}}\mid v)-1)>0$ implies that 

\begin{align}\label{uniexp}
\E_{\mathfrak{u}}(\E(X_{e_{j+1}}\mid v)-1)&\le C \delta_3.
\end{align} 

By \eqref{e:comp2}, $C$ in \eqref{comparable} can be chosen such that such that deterministically $\frac{1}{C}\le X_{e_j}\le C$ and moreover,
\begin{align}\label{log}
\frac{1}{C}\le \prod_{j=J_1+1}^{J_2}X_{e_j}\le C, \text{ which implies, } \left|\sum_{j=J_1+1}^{J_2}\E_\mathfrak{u}(\log X_{e_j})\right|& \leq \log C.
\end{align}
Thus  it follows that there exists $J_1+1\le j\le J_2$ such that $\E_{\mathfrak{u}}(\log X_{e_j})\ge -c^{-1}(\log C)\delta_3$. Hence  we have found a $J_1+1\le j\le J_2$ with the following two properties:
\begin{align*}
1\le \E_{\mathfrak{u}}(X_{e_j})\overset{\eqref{uniexp}}{\le} 1+ C\delta_3 \text{ and }\E_{\mathfrak{u}}(\log(X_{e_j}))\ge-c^{-1}(\log C)\delta_3.
\end{align*}
Now for any edge $e,$ denoting $X_e-1=y_e,$  the above can be restated as 
\begin{align*}
0\le \frac{1}{2^{j\fm}}\sum_{e\in L_j}y_e \le C\delta_3 \text{ and } \frac{1}{2^{j\fm}}\sum_{e\in L_j}\log (1+y_e) \ge -c^{-1}(\log C)\delta_3.
\end{align*}

Now note that by \eqref{comparable}, $y_e\le C$  and hence using Taylor expansion,
$\log (1+y_e)\le y_e -C'y_e^2$ for some universal constant $C'$.
Using the above inequalities it follows that 
$$\E_{\mathfrak{u}}(X_{e_j}-1)^2=\frac{1}{2^{j\fm}}\sum_{e \in L_j } y^2_{e}=O(\delta_3).$$
Thus by Chebyshev inequality, for at least $1-O(\sqrt{\delta_3})$ fraction of $e\in L_j$, we have $|X_{e}-1|\le  \delta_3^{1/4}$. Let us call such an edge $e$,  a good edge. 
Now let us consider all $v\in L_{j-1}$ such that all the children of $v$ are good (let us call such $v$ good).  A naive bound shows that the fraction of good  $v$ is at least $1-O(2^{\fm}\sqrt \delta_3).$
Now for any good $v$ corresponding to an interval $[\bw_1,\bw_2]$ say, if the discrete segment $[\bw_1=\bw^*_0, \bw^*_1, \ldots, \bw^*_{2^{\fm}}=\bw_2]$ corresponds to the $2^{\fm}$ children then Lemma \ref{crudelem43} implies the following: each $\bw^*_i$ for $i\in \llbracket 0,2^{\fm}-k\rrbracket$ is $(\delta',\theta,\frac{n}{2^{j\fm}},k)-\SST,$ where $\delta'=O(2^\fm \delta_3^{1/4}).$ Thus the total fraction of points on $\sS[\bz,\theta,\frac{n}{2^{j\fm}},2^{j\fm}]$ that are not $(\delta',\theta,\frac{n}{2^{j\fm}},k)-\SST$ is at most $O(\frac{k}{2^{\fm}}+2^{\fm}\sqrt \delta_3).$
Now choose $\fm$ large enough and then $\delta_3$ small enough  such that $\max (\frac{k}{2^{\fm}}+2^{\fm}\sqrt \delta_3, \delta')\le \delta_2$.
\end{proof}

It remains to prove that \eqref{smallgap} holds for a number of consecutive scales. This is ensured by the following lemma using another pigeon hole argument. 

\begin{lem}
\label{l:consec}
In the setting of Lemma \ref{l:conditional}, there exists $c>0$, and $I\subseteq [J_1, J_1+\frac{1}{\delta_3^2}]$ with $|I|\geq \frac{c}{\delta_3}$ such that for all $j\in I$
$$0\le \frac{U_{j+1}}{U_j}-1 \le \delta_3. $$
\end{lem}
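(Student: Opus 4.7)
The plan is to combine the triangle-inequality bound $U_{j+1}/U_j \ge 1$ from \eqref{expreps} with the two-sided Lipschitz comparison \eqref{comparable} in order to telescope, and then a pigeonhole over windows of length $c/\delta_3$ inside $[J_1, J_1 + 1/\delta_3^2]$.

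First, set $r_j := U_{j+1}/U_j$ and note that we already know $r_j \ge 1$ for every $j$, so it only remains to prevent $r_j$ from being larger than $1+\delta_3$ on a long interval. The crucial input is that the product $\prod r_j$ across the whole window cannot be too large: by \eqref{comparable} applied levelwise, for any $J_1 \le j \le J_1 + 1/\delta_3^2$ one has $U_j = \sum_{v \in L_j} Y_v$ sandwiched between $C^{-1}$ and $C$ times $\sum_{i} Y_{\mathrm{root}(\cT_i)}$, because $|L_j|=2^{j \fm}$, each $Y_v$ at level $j$ is comparable (up to the constant $C$ of \eqref{comparable}) to $2^{-(j-J_1)\fm}$ times the corresponding root value, and the tree structure reassembles the roots. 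Consequently
\[
\prod_{j=J_1+1}^{J_1 + 1/\delta_3^2} r_j \;=\; \frac{U_{J_1 + 1/\delta_3^2}}{U_{J_1}} \;\le\; C^2,
\]
and taking logarithms yields $\sum_{j} \log r_j \le 2\log C =: C_1$, where each summand is nonnegative.

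Next, call $j$ \emph{bad} if $r_j > 1+\delta_3$, equivalently if $\log r_j > \log(1+\delta_3) \ge \delta_3/2$ for $\delta_3$ small. Since the nonnegative terms $\log r_j$ sum to at most $C_1$, the total number of bad indices in $[J_1+1, J_1 + 1/\delta_3^2]$ is at most $2C_1/\delta_3$.

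Now partition $[J_1, J_1 + 1/\delta_3^2]$ into $\lfloor 1/(c\delta_3) \rfloor$ consecutive windows each of length $c/\delta_3$. If $c$ is chosen small enough that $1/(c\delta_3) > 2 C_1/\delta_3$, i.e., $c < 1/(2C_1)$, then by pigeonhole at least one such window contains no bad index. On that window every $r_j$ lies in $[1, 1+\delta_3]$, which is exactly the conclusion required. The only quantitative point to check is that $c$ depends only on the bi-Lipschitz constants $\alpha$ and $b$ (via $C$), and in particular not on $\delta_3$ or $\fm$, so this is a clean uniform choice. I do not anticipate any genuine obstacle here; the entire argument is essentially a telescoping estimate followed by a window-pigeonhole, and both ingredients are already present in the setup.
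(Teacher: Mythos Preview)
Your proof is correct and follows essentially the same route as the paper: both arguments use the comparability \eqref{comparable} to bound the total growth $U_{J_1+1/\delta_3^2}/U_{J_1}$ by a constant, and then apply a pigeonhole over consecutive windows of length $c/\delta_3$. The only cosmetic difference is that the paper telescopes additively (bounding $\sum_i (U_{J_1+ic/\delta_3}-U_{J_1+(i-1)c/\delta_3})$ and finding a window with small total increment), whereas you telescope multiplicatively via $\sum_j \log r_j$ and count the number of bad indices directly; these are equivalent implementations of the same idea.
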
 

\begin{proof}
For $c>0$ to be specified later, we divide the $\frac{1}{\delta_3^2}$ many scales into consecutive blocks of $\frac{c}{\delta_3}$ many scales each. For $i\in \llbracket 1, \frac{1}{c\delta_3}\rrbracket$, 
Let $a_i=U_{J_1+\frac{ic}{\delta_3}}-U_{J_1+\frac{(i-1)c}{\delta_3}}$. By the triangle inequality, 
$a_{i}\ge 0$ for all $i$, and by \eqref{comparable}, there exists a universal constant $C$ such that $U_{J_1+\frac{1}{\delta_3^2}}\le C U_{J_1}.$ 
As a consequence, $\sum_{i} a_i \leq CU_{J_1}$
and by choosing $c$ sufficiently small it follows there exists some $i\in \llbracket 1,\frac{1}{c\delta_3} \rrbracket$ such that $a_i \le U_{J_1}\delta_3.$ 
Now this implies that for any $J_1+\frac{(i-1)c}{\delta_3}\le j\le J_1+\frac{ic}{\delta_3}$ we have $\frac{U_{j+1}-U_j}{U_j}\le \frac{a_i}{U_{J_1}}\leq \delta_3$; completing the proof. 
\end{proof}

\subsection{Strengthening Lemma \ref{l:weak} to Theorem \ref{t:stablegen}}
We now provide the extra ingredients needed to  extend the argument of the previous subsection to establish the stronger statement of Theorem \ref{t:stablegen}. To avoid repetition, often instead of providing the full formal proof we shall describe the main ideas and present an elaborate sketch. Observe that to establish Theorem \ref{t:stablegen}, one needs to extend Lemma \ref{l:weak} in the following two directions:
 
\begin{enumerate}
\item[(a)] Get the stability at a point simultaneously at all directions in $\bS^1(\eta)$ at the same scale $j$. 
\item[(b)] Deducing stability of most lattice points from stability of points on a discrete segment (which are not necessarily lattice points).
\end{enumerate} 
 
We describe below how to take care of these two items. 
To address the issue in (a) note that one cannot naively apply the above argument separately for all $\theta\in \mathbb{S}^1(\eta)$ since a priori one might not end up with the same scale $j$ for all $\theta\in \mathbb{S}^1(\eta)$. 
\begin{figure}[h]
\centering
\includegraphics[scale=.5]{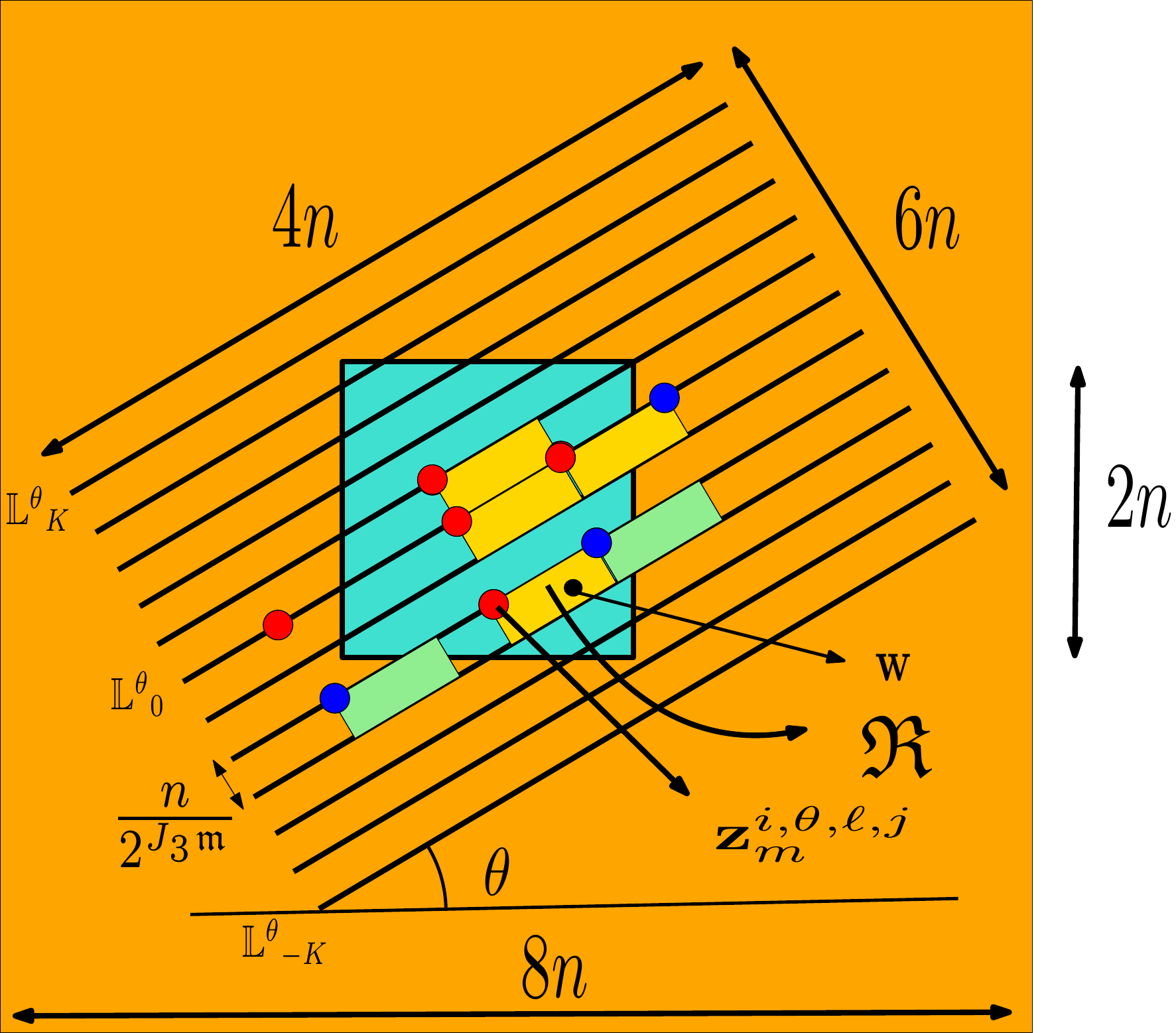}
\caption{This figure illustrates the set of parallel lines $\mathfrak{L}_{\theta}:=\{\mathbb{L}^{\theta}_{-K}, \ldots, \mathbb{L}^{\theta}_{-1}, \mathbb{L}^{\theta}_0,\mathbb{L}^{\theta}_1, \ldots, \mathbb{L}^{\theta}_K\}$. 
The red and blue dots denote the points $\bz^{i,\theta,\ell, j}_{h}$ for $i \in \llbracket-K,K \rrbracket , \theta \in \mathbb{S}^{1}(\eta), \ell \in \llbracket 0,M-1\rrbracket ,$ and $h \in \llbracket 1,2^{(j-J_1)\fm}\rrbracket$. The red and blue colors denote whether the point is  $(\delta_2,\theta,\frac{n}{2^{j\fm}},k_1)-\SST$ or not respectively. For each such point we associate a rectangular box with one of the sides parallel to $\mathbb{L}^{\theta}_{0}$, where the point is at the north-west corner of the associated rectangle. A particular example of a point $\bz^{i,\theta,\ell, j}_{h}$  and the associated rectangle $\mathfrak{R}$  and a lattice point $\bf$ inside $\mathfrak{R}$ are marked in the figure. The green boxes are associated to the blue points and the yellow boxes are associated to the red points. 
}
\label{fig13}
\end{figure}
Instead we do the following:  for each $\theta\in \bS^1(\eta),$ consider the set of parallel lines $$\mathfrak{L}_{\theta}:=\{\mathbb{L}^{\theta}_{-K}, \ldots, \mathbb{L}^{\theta}_{-1}, \mathbb{L}^{\theta}_0,\mathbb{L}^{\theta}_1, \ldots, \mathbb{L}^{\theta}_K\}$$  where for any $i\in \llbracket-K,K\rrbracket,$  $\mathbb{L}^{\theta}_i$ is a line segment of length $4n$, making angle $\theta$ with the $x-$axis; $\mathbb{L}^{\theta}_0$ is centered at the origin; and  $\mathbb{L}^{\theta}_i$ is obtained by translating $\mathbb{L}^{\theta}_{0}$ in the orthogonal direction by  $\frac{i n}{2^{J_3\fm}}$ where $J_3=J_1+\frac{1}{\delta_3^4}$ and $K=3 \times 2^{J_3 \fm}$, (see Figure \ref{fig13}). 
For each $\theta \in \mathbb{S}^{1}(\eta),$ and each $i\in \llbracket-K,K \rrbracket$
let $\sS_{i,\theta}$ be the discrete line segment formed by the points on $\mathbb{L}^{\theta}_i$ at spacing $\frac{n}{2^{J_1\fm}}$ (without loss of generality we assume that the starting and ending points of $\mathbb{L}^{\theta}_i$ and $\sS_{i,\theta}$ are the same to avoid rounding issues). 
Thus $\sS_{i,\theta}=[\bz^{i,\theta}_0,\bz^{i,\theta}_1,\ldots,\bz^{i,\theta}_{M}]$ where $M=4 \times 2^{J_1\fm}.$ 
We now create a tree $\cT_{i,\theta,\ell}$ for each $i\in \llbracket-K,K\rrbracket, \theta \in \mathbb{S}^{1}(\eta), \ell \in \llbracket 0,M-1 \rrbracket$ corresponding to the interval $[\bz^{i,\theta}_{\ell},\bz^{i,\theta}_{\ell+1}]$ as in \eqref{treecon12}. 
As before for any $j\ge J_1$, let $L^{i,\theta,\ell}_j$ denote the $j^{th}$ level of the tree $\cT_{i,\theta,\ell}$ and $L_j=\bigcup_{i,\theta,\ell} L^{i,\theta,\ell}_j$.

Running the same argument as before with these trees in place of the ones in \eqref{treecon12} now gives us  $J_1\le j\le J_1+\frac{1}{\delta_3^2}$ with the following property.
If $$\sS_{i,\theta,\ell,j}=[\bz^{i,\theta,\ell, j}_{0},\bz^{i,\theta,\ell, j}_{1},\ldots, \bz^{i,\theta,\ell, j}_{2^{(j-J_1)\fm}}]$$ denotes the discrete segment corresponding to $L^{i,\theta,\ell}_j,$ i.e., the $2^{(j-J_1)\fm}$ vertices in the latter correspond to the intervals $[\bz^{i,\theta,\ell, j}_{h},\bz^{i,\theta,\ell, j}_{h+1}]$ for $i \in \llbracket-K,K \rrbracket , \theta \in \mathbb{S}^{1}(\eta), \ell \in \llbracket 0,M-1\rrbracket ,$ and $h \in \llbracket 1,2^{(j-J_1)\fm}\rrbracket$. Then  then for any  $k_1$ (to be specified soon and small enough compared to $J_1$) for any $\theta \in \bS^1(\eta)$ except for at most $O(\frac{k_1}{2^{\fm}}+\frac{2^{\fm}\sqrt \delta_3}{\eta})$ fraction, all the remaining  $\bz^{i,\theta,\ell, j}_{h}$, are $(\delta',\theta,\frac{n}{2^{j\fm}}, k_1)-\SST$  where $\delta'=O(2^\fm \delta_3^{1/4}).$

Thus  by choosing $\fm$ large enough followed by $\delta_3$ small enough, provides  
for any $\theta \in \mathbb{S}^1(\eta)$ a dense set of points at spacing $\frac{n}{2^{j\fm}}$ which are  $(\delta_2,\theta,\frac{n}{2^{j\fm}},k_1)-\SST$
and hence addresses the issue in (a).

To address the issue in (b) we will use the above along with Lemma \ref{stab23} to imply  stability for most points in $\Lb(n)$ with slightly worse parameters. 
Fixing $\theta \in \bS^1(\eta),$ for any $(\delta_2,\theta,\frac{n}{2^{j\fm}},k_1)-\SST$ $\bz^{i,\theta,\ell, j}_{h}$, consider any lattice point $\bw$ in the associated rectangular box $\mathfrak{R}$ as illustrated in Figure \ref{fig13}.  Thus $|\bw-\bz^{i,\theta,\ell, j}_{h}|\le 2\frac{n}{2^{j\fm}}$. Hence applying Lemma  \ref{stab23} (by taking $\ell=\frac{n}{2^{j\fm}}$, $m=2$, $k=k_1$ and $C=\sqrt{k_1}$)  now implies that:
$$\#\{\bz\in \Lb(n): \bz~\text{is not}~(\delta', \theta,\frac{n\sqrt k_1}{2^{j\fm}},\sqrt k_1)-\SST\} \leq O(\frac{k_1}{2^{\fm}}+\frac{2^{\fm}\sqrt \delta_3}{\eta})n^2 ,$$
where $\delta'=\delta_2+O(\frac{1}{\sqrt k_1}).$
By a simple union bound over $\theta \in \mathbb{S}^{1}(\eta)$ it follows that  
\begin{equation}\label{lb123456}\#\{\bz\in \Lb(n): \bz~\text{is not}~(\delta',\mathbb{S}^1(\eta), \frac{n\sqrt k_1}{2^{j\fm}},\sqrt k_1)-\SST\} \leq O\left(\frac{1}{\eta}(\frac{k_1}{2^{\fm}}+\frac{2^{\fm}\sqrt \delta_3}{\eta})\right) n^2 .
\end{equation}

The statement of Theorem \ref{t:stablegen} now follows from choosing $\sqrt{k_1}\gtrsim \max(\frac{1}{\delta},k)$ followed by $\delta_{2}$ small enough to ensure $\delta'\le \delta.$ 
and then $\fm$ large enough followed by $\delta_3$ small enough to ensure that $O\left(\frac{1}{\eta}(\frac{k_1}{2^{\fm}}+\frac{2^{\fm}\sqrt \delta_3}{\eta})\right)$ is less than $\e.$ 
Moreover we take the value of $J_2$ to be  $\mathfrak{m}(J_1+\frac{1}{\delta_3^2})$. Note that the value of $j$ in Theorem \ref{t:stablegen} is the value  $j\fm -\frac{\log k_1}{2}$ appearing in \eqref{lb123456}.

\bibliography{delocalization}
\bibliographystyle{plain}

\end{document}